\numberwithin{equation}{section}
\newtheorem{theorem} {Theorem} [section]
\newtheorem{proposition}[theorem]{Proposition}
\newtheorem{corollary}  [theorem]     {Corollary}
\newtheorem{lemma}  [theorem]     {Lemma}
\newtheorem{question}  [theorem]     {Question}
\newtheorem{remark}  [theorem]     {Remark}
\newtheorem{claim}  [theorem]     {Claim}
\theoremstyle{definition}
\newtheorem{definition}  [theorem]     {Definition}
\newtheorem{example}  [theorem]     {Example}
\renewcommand*\backref[1]{}
\renewcommand*\backrefalt[4]{ \ifcase #1 \or (cited on page #2) \else (cited on pages #2) \fi}
\begin{document}

\title{On balanced Hermitian threefolds with parallel Bismut torsion}

\begin{abstract}
 We continue our study on Hermitian manifolds that are {\em Bismut torsion parallel,} or {\em BTP} for brevity, which means that the Bismut connection has parallel torsion tensor. For $n\geq 3$, BTP metrics can be balanced (and non-K\"ahler). In this paper, we give a detailed description to characterize all compact, balanced BTP threefolds.
\end{abstract}

\author{Quanting Zhao}
\address{Quanting Zhao. School of Mathematics and Statistics, and Hubei Key Laboratory of Mathematical Sciences, Central China Normal University, P.O. Box 71010, Wuhan 430079, P. R. China.} \email{zhaoquanting@126.com;zhaoquanting@mail.ccnu.edu.cn}
\thanks{Zhao is partially supported by NSFC with the grant No. 12171180 and 12371079. The corresponding author Zheng is partially supported by NSFC grants 12471039 and 12141101,  by Chongqing Normal University grant 24XLB026, and by the 111 Project D21024.}

\author{Fangyang Zheng}
\address{Fangyang Zheng. School of Mathematical Sciences, Chongqing Normal University, Chongqing 401331, China}
\email{20190045@cqnu.edu.cn} \thanks{}


\subjclass[2020]{ 53C55 (Primary), 53C05 (Secondary)}
\keywords{Hermitian manifolds, Bismut connection, Bismut torsion parallel, balanced manifolds }

\maketitle

\tableofcontents

\section{Introduction and statement of results}

A Hermitian manifold $(M^n,g)$ is said to be {\em Bismut torsion parallel} (or {\em BTP} for brevity), if its Bismut connection has parallel torsion tensor. Unless mentioned otherwise, we will always assume that $M^n$ is compact and $g$ is non-K\"ahler. Such manifolds form a relatively rich and highly interesting class of special Hermitian manifolds. It contains all Bismut K\"ahler-like (or BKL) manifolds and all Vaisman manifolds as special examples. In complex dimension $2$, BTP $=$ BKL $=$ Vaisman by \cite{ZhaoZ19Str}, and compact non-K\"ahler Vaisman surfaces are classified by Belgun in \cite{Belgun}. In complex dimensions $3$ or higher, BKL and Vaisman manifolds form proper, disjoint subsets of the set of non-balanced BTP manifolds as shown in \cite{YZZ}. Also, starting in complex dimension $3$, there are examples of balanced (and non-K\"ahler) BTP manifolds, which tend to form a much smaller set compared with the non-balanced ones, as illustrated by complex nilmanifolds with BTP metrics.

In \cite{ZhaoZ24}, we have studied the general properties of BTP manifolds and analyzed the structure of non-balanced BTP manifolds. In particular, the BTP condition, which means the parallelness of the Bismut torsion, is equivalent to some conditions involving the Bismut curvature tensor alone, by \cite[Theorem 1.1]{ZhaoZ24}. This is a rather distinctive property about the Bismut connection $\nabla^b$, and one certainly does not expect the same to hold for any other metric connection on $M^n$,   as torsion and curvature are independent geometric invariants in general. In \cite[Proposition 1.7]{ZhaoZ24}, we also established the existence of a special kind of local unitary frames (called {\em admissible frames}) on any non-balanced BTP manifold, under which the Chern torsion components are particularly simple. This frame makes the analysis on non-balanced BTP manifolds more accessible, so we get a structure theorem of such manifolds of complex dimension $3$ in \cite[Theorem 1.16]{ZhaoZ24}.

As mentioned before, in complex dimension $3$ or higher there are examples of compact balanced (but non-K\"ahler) BTP manifolds, although such manifolds tend to be fewer than non-balanced BTP ones. To justify this statement,  note that Proposition 1.10 of \cite{ZhaoZ24} classified BTP metrics among all complex nilmanifolds with nilpotent $J$ in the sense of Cordero-Fern\'andez-Gray-Ugarte \cite{CFGU}. It turns out that the majority of these BTP nilmanifolds are non-balanced, while only a small portion of them are balanced. For instance, when the complex dimension is $3$, only one of them (up to scaling of the metric) is balanced, while the non-balanced ones form a two-parameter family.

For balanced BTP manifolds, we do not have the admissible frames to help us, which is why we are only able to deal with the $3$-dimensional situation in this paper. For $n=3$, Zhou and the second named author observed in \cite{ZhouZ} that any balanced Hermitian threefold $(M^3,g)$ always admits a special type of local unitary frames (called {\em special frames}) where the Chern torsion take a relatively simple form. This technical tool enabled us to get the analysis started and  to obtain characterization on all compact balanced BTP threefolds.

Before stating the main result, let us recall that the {\em $B$ tensor} of a Hermitian manifold $(M^n,g)$ is the global non-negative $(1,1)$-tensor on $M^n$ defined by $B_{i\bar{j}} = \sum_{r,s=1}^n T^j_{rs} \overline{T^i_{rs} }$ under any local unitary frame, where $T^j_{ik}$ are the torsion components of the Chern connection under the frame. When $g$ is BTP, we have $\nabla^bT=0$ hence $\nabla^bB=0$, so the eigenvalues of $B$ are all global nonnegative constants. In particular, the rank of $B$ is constant on $M^n$. The following is the main result of this paper:

\begin{theorem} \label{thm1.1}
Let $(M^3,g)$ be a compact, balanced BTP threefold, with $g$ non-K\"ahler. Denote by $r$ the rank of the $B$ tensor.
\begin{enumerate}
\item If $r=3$, then $g$ is Chern flat and the universal cover of $(M^3,g)$ is the complex simple Lie group $SL(2,{\mathbb C})$ equipped with (a constant multiple of) the standard metric (see Example \ref{exampleSL2C}). Furthermore, the canonical line bundle of $M^3$ is always holomorphically trivial.
\item If $r=1$, then $(M^3,g)$ is the Wallach threefold (see \S \ref{FANO}), namely, as a complex manifold $M^3$ is the flag threefold ${\mathbb P}(T_{{\mathbb P}^2})$, the projectivization of the holomorphic tangent bundle of ${\mathbb P}^2$, and $g$ is the invariant Hermitian metric given by the Killing form.
\item If $r=2$, then $(M^3,g)$ is said to be a balanced BTP threefold {\bf of middle type}, in this case, the kernel of $B$ is a holomorphic line bundle $L$ on $M^3$ satisfying $L^{\otimes 2}\cong {\mathcal O}_M$, and $(M^3,g)$ will be called {\bf primary} or {\bf secondary} depending on whether $L\cong {\mathcal O}_M$ (holomorphically trivial) or not.
    \begin{enumerate}
    \item The secondary case. $L^{\otimes 2}\cong {\mathcal O}_M$ implies that $M$ has a double cover which is primary.
    \item The primary case. $L\cong {\mathcal O}_M$ holds if and only if the (global) holonomy group $\mbox{Hol}^{\,\,b}(M)$ of the Bismut connection is abelian. In fact, $\mbox{Hol}^{\,\,b}(M) = U(1)\!\times\!U(1)\!\times\!1$ when $M$ is primary and  $\mbox{Hol}^{\,\,b}(M) =G$ when $M$ is secondary, where $G$ is the ${\mathbb Z}_2$-extension of $U(1)\!\times\!U(1)\!\times\!1$ given in Proposition \ref{propsecondary} and $G$ is not abelian.
    \item Let $(M^3,g,J)$ be primary. Then there exists another complex structure $I$ on $M^3$ compatible with $g$, so that the Hermitian threefold $(M^3,g,I)$ is Vasiman, and the Bismut connection of the two Hermitian threefolds coincide. $(M^3,g,I)$ is called a {\bf Vaisman companion} of $(M^3,g,J)$.
    \end{enumerate}
\end{enumerate}
\end{theorem}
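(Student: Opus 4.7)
The plan is to exploit the Zhou--Zheng special frame for balanced Hermitian threefolds. Since $g$ is balanced, the Chern torsion $T^i_{jk}$ is dual (via the Levi--Civita symbol) to a symmetric $3\times 3$ matrix $S$; by Takagi's decomposition, $S$ can be unitarily diagonalized with non-negative entries $a,b,c$, so in the resulting local frame the $B$-tensor is $\mathrm{diag}(2a^2,2b^2,2c^2)$ and the rank $r$ of $B$ equals the number of nonzero values among $a,b,c$. Since $g$ is BTP, $\nabla^{b}B=0$ forces these eigenvalues to be global constants. Next I substitute this normal form into the first structure equation and into $\nabla^{b}T=0$; the resulting identities both fix the Chern connection forms up to normalization and force them to block-diagonalize according to the multiplicities of $\{a,b,c\}$. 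The theorem then splits by $r=3,1,2$.

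For $r=3$, combining $d^{2}=0$ with parallelness of $T$ forces the Bismut curvature, and hence the Chern curvature, to vanish identically, so $(M^{3},g)$ is Chern flat. By Boothby's theorem, a compact Chern-flat Hermitian manifold is a quotient of a simply connected complex Lie group by a discrete subgroup; among $3$-dimensional complex Lie groups, matching the structure constants determined by the torsion normal form rules out the abelian case (K\"ahler, hence excluded) and all non-balanced candidates, leaving $SL(2,\mathbb{C})$ with its standard bi-invariant metric of Example \ref{exampleSL2C}, and parallelizability yields the triviality of $K_{M}$. For $r=1$, only one of $a,b,c$ is nonzero, and the BTP equations together with the structure equations force enough symmetry on the residual Chern curvature to identify $M^{3}$ as a compact complex homogeneous space; ruling out all other candidates then leaves the Wallach flag threefold $\mathbb{P}(T_{\mathbb{P}^2})$ with the Killing-form metric of \S\ref{FANO}.

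The bulk of the work is $r=2$, with $S=\mathrm{diag}(a,b,0)$. The kernel of $B$ is a $\nabla^{b}$-parallel subbundle, hence a holomorphic line bundle $L\subset T^{1,0}M$. Examining the residual frame freedom shows that the transition functions of $L$ act as $\pm 1$ on $L^{\otimes 2}$, which forces $L^{\otimes 2}\cong\mathcal{O}_{M}$; primary versus secondary is whether this $\mathbb{Z}_{2}$ character is trivial, and the associated unramified double cover in the secondary case proves (a). For (b), I compute the Bismut holonomy from the block-diagonal connection matrix: in the primary case the holonomy reduces to independent phase rotations on the two nonzero eigendirections, i.e.\ $U(1)\!\times\!U(1)\!\times\!1$, while in the secondary case an extra $\mathbb{Z}_{2}$ swap $(1)\leftrightarrow(2)$ appears precisely because $L\not\cong\mathcal{O}_{M}$, giving the nonabelian extension $G$ of Proposition \ref{propsecondary}; the equal-eigenvalue sub-case $a=b$ requires a separate argument to rule out enhanced continuous symmetry. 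For (c), I define the Vaisman companion $I$ by keeping $J$ on the $(a,b)$-eigenplane and flipping $J\mapsto -J$ on $L$: compatibility with $g$ is automatic, integrability and coincidence of the two Bismut connections follow from the explicit torsion form in the special frame, and parallelness of the Lee form of $(M^{3},g,I)$ then yields Vaisman.

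The main obstacle is this $r=2$ analysis: precisely characterizing the $\mathbb{Z}_{2}$-extension that distinguishes primary from secondary holonomy, ruling out enhanced symmetry in the degenerate sub-case $a=b$, and verifying both integrability and the Vaisman property of the companion complex structure $I$ all demand careful bookkeeping with the special-frame transition data. By contrast, the cases $r=3$ and $r=1$ are comparatively rigid, collapsing once the normal form of the torsion is in hand.
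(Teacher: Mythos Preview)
Your overall framework via the Zhou--Zheng special frame is correct, but there are three genuine gaps.

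First, you treat $r=2$ as $S=\mathrm{diag}(a,b,0)$ with $a,b$ potentially distinct, and flag the sub-case $a=b$ as needing separate treatment. In fact the paper's Proposition~\ref{Btype} shows, via explicit $d^{2}\varphi=0$ contradictions, that the balanced BTP condition \emph{forces} the nonzero eigenvalues of $B$ to coincide: the only possible $B$-forms are $\mathrm{diag}(c,0,0)$, $\mathrm{diag}(c,c,0)$, $\mathrm{diag}(c,c,c)$. You have skipped this step entirely, and your later holonomy discussion (worrying about ``enhanced continuous symmetry'' when $a=b$) is built on the wrong premise that $a\neq b$ could occur.

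Second, your $r=1$ argument is not the paper's and does not work as stated. You claim the structure equations force enough symmetry to conclude that $M^3$ is homogeneous, then rule out candidates. The paper does \emph{not} establish homogeneity a priori; instead it computes $\Theta^b$ explicitly, shows that the Chern Ricci form $\mathrm{Ric}(\omega)=2\tilde{\omega}$ is positive definite and in fact K\"ahler--Einstein, so $M^3$ is Fano of index $2$ or $4$. It then derives the Chern-number identity $c_1c_2=2c_3+\tfrac{1}{4}c_1^3$ from the short exact sequence $0\to E\to T_M\to L\to 0$, uses this against the Iskovskikh--Fujita classification of del Pezzo threefolds to leave only $\mathbb{P}^3$ and the flag threefold, and finally excludes $\mathbb{P}^3$ by a delicate argument showing that the canonical section $\xi\in H^0(\Omega(L))$ has constant norm under $\tilde{\omega}$, which is impossible on $\mathbb{P}^3$. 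Nothing in your sketch supplies the homogeneity you assume.

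Third, your definition of the Vaisman companion $I$ is incorrect. You propose keeping $J$ on the rank-$2$ eigenplane of $B$ and flipping $J\mapsto -J$ on the kernel $L$. In the paper's notation this is $\pm J_3$, and the paper explicitly notes that $(M^3,g,\pm J_3)$ is again balanced BTP of middle type, \emph{not} Vaisman. The correct companion flips $J$ on one of the two nonzero-torsion line bundles (e.g.\ $I=-J_2$, flipping on $L_2$ while keeping $J$ on $L_1\oplus L$); only then does the Gauduchon torsion $1$-form become $\hat{\eta}=2a\psi_3\neq 0$ and the resulting structure become Vaisman.
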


In other words, a compact balanced BTP threefold is either Chern flat which is a compact quotient of the complex simple Lie group $SL(2,{\mathbb C})$ equipped with the standard metric, or the Wallach threefold, or of middle type. Up to a double cover if necessary, any middle type one is always primary and admits Vaisman companions, which are Vasiman threefolds sharing the same Bismut connection with the original primary threefold, thus having abelian Bismut holonomy group. See \cite{N} for some very recent results on holonomy groups of Hermitian manifolds.

It follows directly from the proof of the main theorem that the first two cases above are actually BAS, which stands for \emph{Bismut Ambrose Singer}. This notion was introduced in \cite{NZ1, NZ} and it means that the Bismut connection $\nabla^b$ has parallel torsion and curvature, namely, $\nabla^b T^b =0$ and $\nabla^b R^b=0$. A classic theorem of Ambrose and Singer \cite{AS} (and its Hermitian version by Sekigawa \cite{Sekigawa})
says if a complete Riemannian (Hermitian) manifold admits a metric connection (Hermitian connection) which has parallel torsion and curvature, then the manifold is locally homogeneous, that is, its universal cover is a homogeneous Riemannian (Hermitian) manifold. In particular, BAS manifolds are always locally homogeneous. While, for the third case, namely, the middle type, it turns out that the entire Bismut (or Chern or Riemannian) curvature tensor is determined by two real-valued functions, $s$ and $\sigma_2$, which are the first and second elementary symmetric functions of the eigenvalues of the Bismut Ricci form. Of course $s$ is just the Bismut (Chern) scalar curvature.
As observed in Proposition 1.7 of \cite{PodestaZ}, this case will be BAS if and only if both $s$ and $\sigma_2$ are constants.
See Lemma \ref{lemmaPodestaZ} for more details.

In Proposition \ref{LieHermitian} and the content after it, we classified $6$-dimensional unimodular Lie algebras which admits a  Hermitian structure
that is balanced BTP of the middle type. We discussed the isomorphic classes of these Lie algebras and the existence of uniform lattices in the corresponding simply-connected Lie groups. They form two families $A_{s,t}$ and $B_{z,t}$ for $s,t\in {\mathbb R}$ and $z\in {\mathbb C}$,
satisfying the followings
\begin{enumerate}
\item  $A_{0,0}=B_{0,0}=N^3$ determines the only balanced BTP nilmanifold in dimension $3$.
\item The overlapping part is $A_{0,t}=B_{0,t}$, $t\in {\mathbb R}$.
\item Both families are CYT.
\item $A_{s,t}$ and $B_{z,t}$ are $3$-step solvable whenever $(s,t)\neq (0,0)$ and $(z,t)\neq (0,0)$.
\item $A_{s,t}$ for $s+t\neq 0$ and $B_{z,t}$ for $(z,t)\neq (0,0)$ are not of Calabi-Yau type.
\end{enumerate}

Recall that a compact Hermitian manifold $(M^n,g)$ is said to be {\em CYT}, which stands for {\em Calabi-Yau with torsion}, if the first Bismut Ricci curvature vanishes. It is said to be {\em of Calabi-Yau type} if its canonical line bundle is holomorphically trivial. Another result of this paper is the following

\begin{theorem} \label{thm1.2}
Any compact balanced BTP threefold of middle type cannot admit a pluriclosed metric. Consequently, both Fino-Vezzoni Conjecture and Streets-Tian Conjecture hold for compact BTP threefolds.
\end{theorem}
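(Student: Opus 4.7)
My plan is first to prove the technical core---that no compact balanced BTP threefold of middle type admits a pluriclosed Hermitian metric---and then to derive the two conjectures from this combined with the trichotomy of Theorem~\ref{thm1.1}.

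For the technical core, I would begin by reducing to the primary case: part~(3a) of Theorem~\ref{thm1.1} gives a primary double cover in the secondary situation, and pluriclosedness is preserved under finite unbranched covers in both directions, so it suffices to treat the primary case. Part~(3c) then furnishes a Vaisman companion $(M^3,g,I)$ sharing the Bismut connection, producing a $\nabla^b$-parallel unit $(1,0)$-vector field spanning the kernel line bundle of $B$. Choosing a local unitary frame $\{e_1,e_2,e_3\}$ with $e_3$ along this kernel, the BTP condition together with the middle-type rank of $B$ rigidifies the Chern torsion to a single component $T^3_{12}$ of constant norm, and the Bismut curvature is then captured by the two symmetric functions $s,\sigma_2$ of the Bismut Ricci eigenvalues as in Lemma~\ref{lemmaPodestaZ}. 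Now let $h$ be any pluriclosed Hermitian metric on $(M^3,J)$. Since $g$ is balanced, $\omega_g^2$ is $d$-closed, so $\int_M \omega_h\wedge\omega_g^2>0$. The rigid expression of $\partial\omega_g$ dictated by the adapted frame allows one to write $\partial\omega_g$ as a specific wedge of the dual coframe of $e_3$ with a $(2,0)$-form of constant coefficients; pairing with $\omega_h$ and invoking $\partial\bar\partial\omega_h=0$ via Stokes' theorem produces an integral identity that forces $\int_M \omega_h\wedge\omega_g^2\le 0$, the desired contradiction.

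The deduction of the two conjectures is then short. Given a compact BTP threefold $(M^3,J)$ admitting both a balanced and a pluriclosed metric, Theorem~\ref{thm1.1} leaves three options for the balanced BTP model: the $r=3$ case is an $SL(2,\mathbb{C})$-quotient, whose complex parallelizable (and nonabelian) structure rules out pluriclosed metrics by a classical result; the $r=1$ case is the Wallach threefold $\mathbb{P}(T_{\mathbb{P}^2})$, which is Fano and hence admits Kähler metrics, consistent with Fino--Vezzoni; and the $r=2$ case is excluded by the technical core. For Streets--Tian, a CYT pluriclosed BTP threefold has by definition vanishing first Bismut Ricci, and combining this with the characterization of the Bismut curvature in Lemma~\ref{lemmaPodestaZ} and $\partial\bar\partial\omega=0$ forces the Bismut torsion itself to vanish, hence the metric is Kähler Calabi--Yau.

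The principal obstacle is the Stokes-type computation in the first paragraph. The difficulty is that one must rule out \emph{every} pluriclosed metric, not merely a deformation of $g$, so the testing $(2,2)$-current used against $\omega_h$ must be constructed canonically from the Vaisman foliation coming from the kernel of $B$: positively oriented and $d$-closed thanks to balancedness, yet refined enough that its pairing with $\partial\bar\partial\omega_h$ picks up a definite sign. This leafwise integration construction, where the full rigidity of middle-type BTP structure enters, is the technical heart of the argument.
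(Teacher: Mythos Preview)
Your overall shape---build a test form from the kernel direction of $B$ and use Stokes against a pluriclosed metric---is the right instinct, but several concrete steps are wrong or missing.

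\textbf{The core integral argument is miswired.} You aim to derive $\int_M \omega_h\wedge\omega_g^2\le 0$, but this integral is always strictly positive for any two Hermitian metrics, so no contradiction can arise that way. The paper's argument does not pair $\omega_h$ against $\omega_g^2$; instead it constructs the global $(1,1)$-form $\Phi=\varphi_3\wedge\overline{\varphi}_3$ (well-defined on all of $M$ because $e_3$ is determined up to sign---no reduction to the primary case or appeal to a Vaisman companion is needed) and computes directly from the structure equations that $\partial\overline\partial\Phi=2a^2\,\varphi_{1\bar 1}\wedge\varphi_{2\bar 2}$. Wedging with any Hermitian form $\omega_0$ then gives a pointwise negative multiple of the volume form, so $\int_M\partial\overline\partial\Phi\wedge\omega_0<0$, contradicting pluriclosedness via Stokes. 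Your proposal never identifies this $\Phi$, and your description of ``$\partial\omega_g$ as a specific wedge'' does not lead to it.

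\textbf{The torsion data is backwards.} In the middle type (rank $B=2$) under a special frame one has $T^1_{23}=T^2_{31}=a>0$ and $T^3_{12}=0$, not a single nonzero component $T^3_{12}$. The vanishing of the third component is exactly what makes $\varphi_{3\bar 3}$ the right test form.

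\textbf{The Streets--Tian deduction is incorrect.} Hermitian-symplectic has nothing to do with CYT; your argument via vanishing first Bismut Ricci and Lemma~\ref{lemmaPodestaZ} does not address the conjecture. The correct one-line deduction is that any Hermitian-symplectic metric is automatically pluriclosed, so nonexistence of pluriclosed metrics settles Streets--Tian immediately in the middle-type and Chern-flat cases (the latter handled either directly via $\sqrt{-1}\,\partial\overline\partial\omega={}^t\tau\wedge\overline\tau$ or by citing \cite{DLV}), while the Fano case is already K\"ahlerian.

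\textbf{Finally}, the theorem claims both conjectures for all compact BTP threefolds, including non-balanced ones; that case is covered by \cite[Corollary~1.19]{ZhaoZ24}, which you do not mention.
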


Recall that Fino-Vezzoni Conjecture states that if a compact complex manifold admits a balanced metric and a pluriclosed metric, then it must admit a K\"ahler metric. Streets-Tian Conjecture states that any compact Hermitian-symplectic manifold must admit a K\"ahler metric. Both conjectures are known to be true for non-balanced BTP threefolds by  \cite[Corollary 1.19]{ZhaoZ24}. For balanced BTP threefolds, by Theorem \ref{thm1.1} we just need to consider the middle type case, as the Fano case is already K\"ahlerian while the Chern flat case is either known (for Streets-Tian conjecture in any dimension by \cite{DLV}) or relatively easy to deduce as in the proof of Corollary \ref{cor6.8}.

\begin{remark}
The Bismut connection \cite{Bismut}, also known as Strominger connection \cite{Strominger} in the literature, is the unique Hermitian connection with totally skew-symmetric torsion. So BTP manifolds are special examples of the much broader class of Riemannian manifolds admitting a metric connection with skew-symmetric and parallel torsion. There have been extensive studies on the latter in the setting of Riemannian geometry or almost Hermitian geometry 
with an incomplete list \cite{AF04}, \cite{AF14}, \cite{AFF}, \cite{AFS}, \cite{AFK}, \cite{CMS} \cite{Sch} and the references therein. Here we carry out the investigation in the slightly more delicate Hermitian situation, where the emphasis is on complex structures, with the hope of understanding the Bismut geometry for complex manifolds.
\end{remark}

The paper is organized as follows.  In \S \ref{BS3D}, we will discuss the types of the $B$ tensor of a non-K\"ahler balanced {\em BTP} threefold, prove Proposition \ref{Btype}, and also show that the $\mathrm{rank}\,B=3$ case of Theorem \ref{thm1.1} leads to compact quotients of $SL(2,{\mathbb C})$. In \S \ref{FANO}, we will prove that the $\mathrm{rank}\,B=1$ case of Theorem \ref{thm1.1} leads to the Wallach threefold. In \S \ref{WCH3D}, we will carry out the detailed computation which shows that the Wallach threefold is indeed balanced and  BTP. In the Appendix, we will verify that the Wallach threefold has nonnegative bisectional curvature and positive holomorphic sectional curvature for its Chern connection, while its Riemannian connection has non-negative sectional curvature and constant Ricci curvature. In \S \ref{mddtype3D}, we will prove the $\mbox{rank}\,B=2$ case of Theorem \ref{thm1.1}, prove Theorem \ref{thm1.2}, and classify Hermitian unimodular Lie algebras of complex dimension $3$
which are balanced BTP of the middle type. Finally, in the last section, we will consider some generalization in higher dimensions, and prove Theorem \ref{thm6.3}.

\vspace{0.4cm}

\section{Balanced BTP threefolds}\label{BS3D}

We will follow the notations in \cite{ZhaoZ24} throughout this paper. Given a Hermitian manifold $(M^n,g)$, denote by $\nabla^c$, $\nabla^b$ the Chern and Bismut connection, and by $T$, $T^b$ the Chern and Bismut torsion. Under a frame $e$ of type $(1,0)$ tangent vectors, let $T^j_{ik}$ be the components of the Chern torsion, namely,
$$ T(e_i,e_k) := \nabla^c_{e_i}e_k - \nabla^c_{e_k}e_i - [e_i, e_k] = \sum_{j=1}^n T^j_{ik} e_j. $$
The BTP assumption on the metric $g$ means that $\nabla^bT^b=0$, which is equivalent to $\nabla^bT=0$.
This has been shown in \cite[the proof of Proposition 1]{ZhaoZ19Str}

Now let $(M^3,g)$ be a compact non-K\"ahler balanced BTP manifold. We will start with a technical observation in \cite[Proposition 2]{ZhouZ} which says that any balanced threefold always admits a particular type of unitary frame under which the Chern torsion takes a simple form. We include the proof here for readers' convenience. Let us denote by $B$  the $2$-tensor on the manifold  defined by $B_{i\overline{j}}=\sum_{r,s} T^j_{rs}\overline{T^i_{rs}}$ under any local unitary frame. It is  globally-defined and Hermitian symmetric.

\begin{lemma}[\cite{ZhouZ}] \label{lemmaA}
Let $(M^3,g)$ be a Hermitian threefold that is balanced. Then for any given $p\in M^3$, there exists a local unitary frame $e$ in a neighborhood of $p$ such that the $T^i_{ij}=0$ for $1\leq i,j\leq 3$ under $e$. Furthermore, if the rank of the $B$ tensor is constant in the neighborhood, then we can choose a local unitary frame $e$ so that $T^i_{ij}=0$ for $1\leq i,j\leq 3$ and
$$T^1_{23} \geq 0,\ T^2_{31} \geq 0,\ T^3_{12} \geq 0. $$
\end{lemma}

\begin{proof}
Given any $p\in M$, let $e$ be a unitary frame near $p$. Consider the $3\times 3$ matrix $A_{i\alpha} =T^{\alpha }_{jk}$ where $1\leq i,\alpha \leq 3$ and $(ijk)$ is a cyclic permutation of $(123)$. The metric $g$ is assumed to be balanced, which means $d(\omega^{n-1})=0$ where $\omega$ is the K\"ahler form of $g$, or  equivalently $\eta=0$ where $\eta$ is Gauduchon's torsion $1$-form defined by $\partial (\omega^{n-1}) = -\eta \wedge \omega^{n-1}$. Let $\varphi$ be the coframe dual to $e$, and write $\eta = \sum_i \eta_i \varphi_i$, then  $\eta_i=\sum_s T^s_{si}=0$ for each $i$. So we have, for $i \neq j$,
$$ A_{ij}=T^j_{jk}= - T^i_{ik} = T^i_{ki} = A_{ji}.$$
That is, the  $3\times 3$ matrix $A$ is symmetric. Therefore, there exists a unitary matrix $U$ of local smooth functions such that $\overline{U} A \,^t\!\overline{U}$ is diagonal. Now let $\tilde{e}$ be the new unitary local frame given by $\tilde{e}_i = \sum_s U_{is}e_s$. Then we have
\begin{eqnarray*}
 \widetilde{A}_{i\alpha} & = &  \widetilde{T}^{\alpha}_{jk} \ = \ \sum_{r,s,t} U_{jr}U_{ks} T^t_{rs} \overline{U}_{\alpha t} \ = \ \sum_{c,t} (U_{ja}U_{kb}-U_{jb}U_{ka}) T^t_{ab} \overline{U}_{\alpha t} \\
 & = & \sum_{c,t} (U_{ja}U_{kb}-U_{jb}U_{ka}) A_{ct} \overline{U}_{\alpha t} \ = \  \sum_{c,t} \det (U) \,(U^{-1})_{ci} A_{ct} \overline{U}_{\alpha t}   \\
 & = & \sum_{c,t} \det (U)\, \overline{U}_{ic}A_{ct} \overline{U}_{\alpha t} \ = \ \det (U)\, (\overline{U} A \,^t\!\overline{U})_{i\alpha}.
 \end{eqnarray*}
Here we assumed that $(ijk)$ and $(abc)$ are both cyclic permutations of $(123)$. Hence $\widetilde{A}$ is diagonal, which means that under the new unitary frame $\tilde{e}$ we have $\widetilde{T}^s_{st}=0$ for any $1\leq s,t\leq 3$, and the only possibly non-zero torsion components are $\widetilde{T}^i_{jk}=-\widetilde{T}^i_{kj}$ where $(ijk)$ is any cyclic permutations of $(123)$. Let us write $a_i=\widetilde{T}^i_{jk}$. The $B$ tensor under $\tilde{e}$ now takes the form
$$ B = 2 \begin{bmatrix} |a_1|^2 & 0 & 0 \\ 0 & |a_2|^2 & 0 \\ 0 & 0 & |a_3|^2 \end{bmatrix}\!\!.$$
Denote by $r$ the constant rank of $B$ in the neighborhood. At $p$, exactly $r$ of those $a_i$ are non-zero. Without loss of generality, we may assume that $a_1\cdots a_r|_p\neq 0$. Let $V$ be a possibly smaller neighborhood of $p$ in which $a_1\cdots a_r \neq 0$. Since the rank of $B$ is constantly $r$, within $V$ we must have $a_{r+1}=\cdots =a_3=0$.  Define smooth functions in $V$
$$ \rho_i = \frac{a_i}{|a_i|}, \quad  \forall \ 1\leq i\leq r \quad \mbox{and} \quad \rho_i=1 , \quad \forall \ r\!+\!1\leq i\leq 3.  $$
Consider a new unitary frame $e'$ in $V$ given by $e'_i =  (\overline{\rho}_j \overline{\rho}_k)^{\frac{1}{2}} \,\tilde{e}_i$ where $(ijk)$ is a cyclic permutation of $(123)$. Then we still have $T'^s_{st}=0$ for any $s$, $t$, while
$$ T'^i_{jk} = \widetilde{T}^i_{jk} \, (\rho_j\rho_k)^{\frac{1}{2}} (\overline{\rho}_k \overline{\rho}_i )^{\frac{1}{2}}  (\overline{\rho}_i \overline{\rho}_j )^{\frac{1}{2}} = a_i \, \overline{\rho}_i = |a_i|. $$
Hence the frame $e'$ satisfies the requirement of the lemma.
\end{proof}

\begin{definition}
A local unitary frame $e$ on a given balanced Hermitian threefold $(M^3,g)$ is called a {\bf special frame,} if the Chern torsion components under $e$ satisfy
\begin{equation}\label{specialframe}
T^1_{1k}=T^2_{2k}=T^3_{3k}=0, \quad \forall \ 1\leq k\leq 3 \quad \mbox{and} \quad  T^1_{23} \geq 0,\ T^2_{31} \geq 0,\ T^3_{12} \geq 0.
\end{equation}
\end{definition}

Lemma \ref{lemmaA} simply says that special frames exist in a neighborhood  of any given point in a balanced threefold wherever the $B$ tensor has constant rank. Now let $(M^3,g)$ be a compact Hermitian threefold that is balanced and BTP. Then the condition $\nabla^bB=0$ guarantees that $B$ has constant rank over the entire $M$ as the eigenfunctions of $B$ are constants, so there always exist special frames everywhere. Let $e$ be a special frame and write
\begin{equation}
 a_1=T^1_{23}, \ \ \ a_2=T^2_{31}, \ \ \ a_3=T^3_{12}. \label{eq:a}
\end{equation}

\begin{lemma} \label{lemmaB}
Let $(M^3,g)$ be a Hermitian threefold that is balanced and BTP. Suppose that $e$ is a special frame. Then the local non-negative functions $a_1, a_2, a_3$ given by (\ref{eq:a}) are global constants and we may assume $a_1 \geq a_2 \geq a_3$ after a signed permutation of $e$.
Furthermore, it holds that
\begin{equation} \label{eq:aithetab}
\left\{ \begin{split}  a_1(\theta^b_{22}+\theta^b_{33} - \theta^b_{11} ) \ = \ 0,  \hspace{4.7cm} \\
 a_2(\theta^b_{11}+\theta^b_{33} - \theta^b_{22} ) \ = \ 0 , \hspace{4.7cm} \\
 a_3( \theta^b_{11}+\theta^b_{22} - \theta^b_{33} ) \ =  \ 0 , \hspace{4.7cm} \\
 a_1\theta^b_{12}+a_2\theta^b_{21} \ =\  a_1\theta^b_{13}+a_3\theta^b_{31} \ = \ a_2\theta^b_{23}+a_3\theta^b_{32} \ = \ 0,
 \end{split} \right.
\end{equation}
where $\theta^b$ denotes the matrix of the Bismut connection $\nabla^b$ under the frame $e$.
\end{lemma}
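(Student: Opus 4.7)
The plan is to expand the BTP identity $\nabla^b T=0$ component-by-component in the special frame and read off the seven stated identities. In this frame only the three components $T^1_{23}=a_1$, $T^2_{31}=a_2$, $T^3_{12}=a_3$ (and their antisymmetric partners) are potentially nonzero, so every expansion of $(\nabla^b T)^j_{ik}=0$ collapses to very few terms.

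First I would process the three cyclic index triples $(j,i,k)\in\{(1,2,3),(2,3,1),(3,1,2)\}$. Taking $(1,2,3)$ as a model, $\nabla^b T^1_{23}=0$ expands, after substitution of the special-frame values, to an equation of the schematic shape
\begin{equation*}
da_1 \ = \ a_1\bigl(\theta^b_{22}+\theta^b_{33}-\theta^b_{11}\bigr).
\end{equation*}
Because $\theta^b$ is the connection matrix of the $g$-unitary connection $\nabla^b$, it is skew-Hermitian, so its diagonal entries are purely imaginary; on the other hand $a_1$ is real-valued. Splitting into real and imaginary parts simultaneously forces $da_1=0$ together with the first equation of (\ref{eq:aithetab}). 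Cycling $(2,3,1)$ and $(3,1,2)$ yields $da_2=da_3=0$ and the remaining two cyclic identities.

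Local constancy of each $a_i$ is upgraded to global constancy by a $B$-tensor argument: $\nabla^b T=0$ implies $\nabla^b B=0$, so the eigenvalues of the globally-defined Hermitian $(1,1)$-tensor $B$ are constant on $M^3$; a direct computation in the special frame shows that $B$ is diagonal there with eigenvalues $2a_1^2,2a_2^2,2a_3^2$, and the enforced ordering $a_1\ge a_2\ge a_3\ge 0$ then pins each $a_i$ down as a global constant. For the three remaining cross identities I would exploit index triples where $T^j_{ik}$ itself vanishes but the connection terms still couple two of the $a_i$. A representative case is $(j,i,k)=(1,1,3)$: in the expansion of $\nabla^b T^1_{13}=0$ only the contributions from $T^2_{13}=-a_2$ and $T^1_{23}=a_1$ survive, yielding (after matching conventions) a relation of the form $a_1\theta^b_{12}+a_2\theta^b_{21}=0$. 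The triples $(1,2,1)$ and $(2,3,2)$ (or their cyclic analogs) furnish the other two cross relations, completing (\ref{eq:aithetab}).

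The only conceptual subtlety lies in the decoupling at the first step: one uses the skew-Hermitian structure of $\theta^b$ together with the reality of each $a_i$ to split a single complex equation into two independent real statements (the vanishing of $da_i$ and the bracketed identity). Everything else is careful but routine bookkeeping of the few nonzero torsion components in the special frame.
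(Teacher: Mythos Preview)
Your proposal is correct and follows essentially the same route as the paper: expand the identity $\nabla^bT=0$ component-wise in the special frame, using the distinct-index triples for the diagonal relations and the $j=i\neq k$ triples for the cross relations. The one small variation is that you extract local constancy of the $a_i$ via the real/imaginary split of $da_i = a_i(\theta^b_{jj}+\theta^b_{kk}-\theta^b_{ii})$, whereas the paper establishes global constancy first via $\nabla^bB=0$ and then reads the bracketed identities off directly from $0=dT^j_{ik}$; both orderings are valid and you invoke the $B$-tensor argument anyway for the global statement. One minor bookkeeping slip: the triple $(j,i,k)=(2,3,2)$ reproduces $a_1\theta^b_{12}+a_2\theta^b_{21}=0$ rather than the third cross identity; a correct choice for $a_2\theta^b_{23}+a_3\theta^b_{32}=0$ is $(j,i,k)=(2,2,1)$, which your parenthetical ``or their cyclic analogs'' already anticipates.
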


\begin{proof}
The $B$ tensor under a special frame $e$ takes the form
$$ B = 2 \begin{bmatrix} a_1^2 & 0 & 0 \\ 0 & a_2^2 & 0 \\ 0 & 0 & a_3^2 \end{bmatrix}\!\!.$$
By the {\em BTP} assumption, we have $\nabla^bB=0$, so the eigenfunctions of $B$ are all global constants on $M$,
which turn out to be $2a_1^2$, $2a_2^2$ and $2a_3^2$. Therefore each $a_i$ is a global constant. If these
 non-negative constants $a_1,a_2,a_3$ are not in non-increasing order, say for instance $a_3 \geq a_1 \geq a_2$, then by using the special frame $(e_3,e_1,e_2)$ instead, we get them into the non-increasing order. For another example, if $a_3 \geq a_2 \geq a_1$, then we may use the special frame $(-e_3,e_2,e_1)$ to achieve the requirement.

Denote by $\theta^b$ the matrix of the Bismut connection $\nabla^b$ under the frame $e$. Since $\nabla^bT=0$ and all $T^j_{ik}$ are constants, we have
\begin{equation}
0 = dT^j_{ik} =  \sum_r \big( T^j_{rk} \theta^b_{ir} + T^j_{ir} \theta^b_{kr} - T^r_{ik} \theta^b_{rj} \big).   \label{eq:6.4}
\end{equation}
Since the only possibly non-zero components of $T$ are $a_1$, $a_2$ and $a_3$, if we take $i$, $j$, $k$ all distinct in (\ref{eq:6.4}), we get the first three lines of (\ref{eq:aithetab}). Similarly, by taking $j=i \neq k$ in (\ref{eq:6.4}), we get the last line of (\ref{eq:aithetab}).
\end{proof}

Denote by $\varphi$ the unitary coframe dual to the special frame $e$, and by $\theta$, $\tau$ the matrix of connection and column vector of torsion under $e$ for the Chern connection $\nabla^c$. As the Chern torsion components $T^j_{ik}$ are defined by $T(e_i, e_k) = \sum_j T^j_{ik}e_j$, so we have $\tau_j =\frac{1}{2} \sum_{i,k} T^j_{ik}\,\varphi_i \wedge \varphi_k$ by the structure equations and Bianchi identities
set up in \cite[Section 2]{ZhaoZ24}.

Let $\gamma =\nabla^b -\nabla^c$ be the $(2,1)$-tensor introduced in \cite{YZ18Cur}, and for convenience we will also use the same letter to denote its matrix representation under $e$, that is, $\gamma = \theta^b - \theta^c$. On the Hermitian manifold $(M^n,g)$ and under any unitary frame, it holds that $\gamma_{ij}=\sum_k (T^j_{ik}\varphi_k - \overline{T^i_{jk}} \overline{\varphi}_k)$ as shown in \cite{YZ18Cur}.
These have also been given in \cite[Section 2]{ZhaoZ24}. Therefore in our case we have
\begin{equation} \label{eq:6.7}
\gamma = \begin{bmatrix} 0 & -\overline{\psi}_3  & \psi_2 \\ \psi_3 & 0 & -\overline{\psi}_1 \\ - \overline{\psi}_2 & \psi_1 &  0 \end{bmatrix}\!\!, \ \ \ \ \ \tau = \begin{bmatrix} a_1 \varphi_2\varphi_3 \\ a_2 \varphi_3\varphi_1 \\ a_3 \varphi_1\varphi_2  \end{bmatrix}\!\!,     \hspace{1cm} \mbox{where}
\end{equation}
\begin{equation} \label{eq:6.8}
\psi_1 = a_2\varphi_1+a_3\overline{\varphi}_1, \ \ \ \ \ \ \psi_2 = a_3\varphi_2+a_1\overline{\varphi}_2, \ \ \  \ \ \ \psi_3 = a_1\varphi_3+a_2\overline{\varphi}_3.
\end{equation}

\begin{proposition}\label{Btype}
Let $(M^3,g)$ be a non-K\"ahler balanced BTP threefold. Then under any special frame $e$, the $B$ tensor takes the form
\[B = \begin{bmatrix}  c & 0  & 0 \\ 0 & 0 &  0  \\ 0 & 0 & 0 \end{bmatrix}\!\!,\
\begin{bmatrix}  c & 0  & 0 \\ 0 & c &  0  \\ 0 & 0 & 0 \end{bmatrix}\  \text{or} \
\begin{bmatrix}  c & 0  & 0 \\ 0 & c &  0  \\ 0 & 0 & c \end{bmatrix}\!\!,\]
where $c>0$ is a constant.
\end{proposition}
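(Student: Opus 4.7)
My plan is to extract sharp structural constraints on the Bismut connection matrix $\theta^b$ from the BTP identities (\ref{eq:aithetab}), and then push them through the structure equations together with $d^2 = 0$ to rule out every unequal configuration of the $a_i$'s. Since $B = 2\,\mathrm{diag}(a_1^2, a_2^2, a_3^2)$ under a special frame with $a_1 \geq a_2 \geq a_3 \geq 0$, the proposition is equivalent to the claim that the nonzero values among $a_1, a_2, a_3$ coincide.

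The first step is to unpack (\ref{eq:aithetab}). The off-diagonal relation $a_i\theta^b_{ij} + a_j\theta^b_{ji} = 0$, combined with the skew-Hermitian identity $\theta^b_{ji} = -\overline{\theta^b_{ij}}$, rearranges to $a_i\theta^b_{ij} = a_j\overline{\theta^b_{ij}}$. Conjugating this and substituting back yields $(a_i^2 - a_j^2)\theta^b_{ij} = 0$, so $\theta^b_{ij} = 0$ whenever $a_i \neq a_j$ (with at least one of them nonzero). Summing and subtracting the first three relations of (\ref{eq:aithetab}) in pairs similarly forces $\theta^b_{ii} = 0$ for all $i$ when all three $a_i$'s are positive, and $\theta^b_{11} = \theta^b_{22}$, $\theta^b_{33} = 0$ when exactly $a_1, a_2 > 0$ and $a_3 = 0$.

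The potentially bad configurations are $a_1 > a_2 > a_3 > 0$, $a_1 > a_2 > 0 = a_3$, $a_1 = a_2 > a_3 > 0$, and $a_1 > a_2 = a_3 > 0$. In the all-distinct positive case, the above steps force $\theta^b \equiv 0$, so $\nabla^b$ is flat; the Bismut structure equation then reads $d\varphi_j = \tau^b_j$, and imposing $d^2 = 0$ produces algebraic identities among the $a_i$'s that can only be satisfied when they are all equal (alternatively, one may invoke the classification of compact Bismut-flat Hermitian manifolds to identify $(M^3,g)$ with a quotient of $SL(2,\mathbb{C})$, on which $a_1 = a_2 = a_3$). In each of the three remaining mixed cases, $\theta^b$ is highly constrained, with at most a single real off-diagonal 1-form surviving. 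Substituting $\theta^c = \theta^b - \gamma$ via (\ref{eq:6.7})--(\ref{eq:6.8}) into the Chern structure equation $d\varphi_j + \sum_m \theta^c_{mj}\wedge\varphi_m = \tau^c_j$ gives explicit expressions for the $d\varphi_i$'s; then $d^2\varphi_k = 0$ for the appropriate index collapses---after the contributions of the surviving diagonal or real off-diagonal 1-forms of $\theta^b$ cancel pairwise---to an identity of the shape
\[
(a_i^2 - a_j^2)\,\omega_0\wedge\varphi_k \;=\; 0,
\]
where $\omega_0$ is a nonzero $(1,1)$-form built from the coframe, essentially of the form $\varphi_i\wedge\overline{\varphi_i} - \varphi_j\wedge\overline{\varphi_j}$. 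Since $\omega_0\wedge\varphi_k$ is nonzero, this forces $a_i = a_j$, contradicting the assumed inequality.

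The main technical obstacle is the careful $(p,q)$-bidegree bookkeeping in the $d^2\varphi_k = 0$ expansion: many mixed-bidegree terms appear, and one must verify that the contributions involving the surviving entries of $\theta^b$ indeed cancel to leave exactly the $(a_i^2 - a_j^2)$-proportional identity. A useful organizing principle is that each surviving 1-form of $\theta^b$ enters symmetrically through the two factors in expressions like $d(\varphi_i\wedge\overline{\varphi_j})$, so its contributions cancel, whereas the $a_i$-dependent torsion contributions enter asymmetrically and survive with coefficients of the form $a_i^2 - a_j^2$.
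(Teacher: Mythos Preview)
Your strategy matches the paper's: extract the form of $\theta^b$ from Lemma~\ref{lemmaB} and the $\nabla^b$-parallelism of the eigenspaces of $B$, then compute $d^2\varphi_3$ via the Chern structure equation to derive a contradiction in each forbidden case. However, two of your claimed conclusions are incorrect.

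First, in the case $a_1>a_2>a_3>0$, you correctly deduce $\theta^b=0$ and hence Bismut flatness, but your proposed resolutions are off. The $d^2=0$ route does not by itself force $a_1=a_2=a_3$ here, and invoking the Bismut-flat classification to land on $SL(2,\mathbb C)$ is circular, since that identification uses balancedness and the BTP structure. The paper's argument is simply that a balanced Bismut-flat manifold is K\"ahler, contradicting the hypothesis; this is the missing ingredient.

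Second, and more importantly, your claimed universal shape $(a_i^2-a_j^2)\,\omega_0\wedge\varphi_k=0$ for the $d^2\varphi_k$ obstruction is wrong in the case $a_1=a_2=a>a_3>0$ (and by symmetry in $a_1>a_2=a_3>0$). There the surviving off-diagonal real $1$-form $\alpha=\theta^b_{12}$ does cancel out of $d^2\varphi_3$, but what remains is proportional to $a_3^2\,\varphi_3\wedge(\varphi_{1\bar1}+\varphi_{2\bar2})$, not to any $a_i^2-a_j^2$. The contradiction comes from $a_3>0$, not from an inequality of squares. Your ``organizing principle'' that the torsion contributions survive with coefficients $a_i^2-a_j^2$ is therefore false in these two subcases; the actual computation (which the paper carries out explicitly) is needed rather than the heuristic you give.
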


\begin{proof}  Note that  since $M^3$ is not K\"ahler, we have $a_1>0$.
We shall divide the discussion into the following three cases: (1) $a_1>a_2>a_3$; (2) $a_1=a_2=a_3$; and (3) $a_1=a_2>a_3$ or $a_1>a_2=a_3$.

\vspace{0.15cm}

\noindent {\bf Case 1:} $a_1>a_2>a_3$.

\vspace{0.1cm}

In this case, $B$ has distinct eigenvalues. Since its eigenspaces are all $\nabla^b$-parallel, we know that the matrix $\theta^b$ is diagonal.
If $a_3>0$, by the first three lines of (\ref{eq:aithetab}), we get $\theta^b_{ii}=0$ for each $i$, hence $\theta^b=0$. This means that $M^3$ is Bismut flat. Such a manifold cannot be balanced unless it is K\"ahler, contradicting to our assumption that $M^3$ is balanced and non-K\"ahler, so we must have $a_3=0$, which implies $\psi_1=a_2\varphi_1$ and $\psi_2=a_1\overline{\varphi}_2$. Since $a_1, a_2>0$, by the first two lines of (\ref{eq:aithetab}), we get
\begin{equation*}
 \theta^b_{33}=0, \ \ \  \theta^b_{11}=\theta^b_{22} = \alpha ,
\end{equation*}
and
$$ \theta = \theta^b-\gamma =  \begin{bmatrix} \alpha  & \overline{\psi}_3 & -a_1\overline{\varphi}_2 \\ -\psi_3 & \alpha & a_2 \overline{\varphi}_1 \\ a_1\varphi_2 & -a_2\varphi_1 & 0 \end{bmatrix}\!\!. $$
Then the structure equation for the Chern connection yields
\begin{equation}
 d\varphi  =  -\,^t\!\theta \wedge \varphi + \tau = \begin{bmatrix} -\alpha \varphi_1 + \psi_3 \varphi_2 \\ - \alpha \varphi_2 - \overline{\psi}_3 \varphi_1  \\ a_2 \varphi_2 \overline{\varphi}_1 - a_1 \varphi_1 \overline{\varphi}_2   \end{bmatrix} \!\!. \label{eq:6.9}
 \end{equation}
By the first two equations of (\ref{eq:6.9}), we get
\begin{eqnarray*}
 d(\varphi_2 \overline{\varphi}_1 ) &  = & d\varphi_2 \wedge \overline{\varphi}_1 - \varphi_2 \wedge d\overline{\varphi}_1  \\
 & = & (-\alpha \varphi_2 - \overline{\psi}_3 \varphi_1 )\,\overline{\varphi}_1 - \varphi_2 (-\overline{\alpha} \,\overline{\varphi}_1 +  \overline{\psi}_3 \overline{\varphi}_2 ) \\
 & = &  \overline{\psi}_3 \,( \varphi_2 \,\overline{\varphi}_2 - \varphi_1\,\overline{\varphi}_1),
\end{eqnarray*}
where $\alpha + \overline{\alpha} =0$ is used. Taking the complex conjugation, we get
$$  d(\varphi_1 \overline{\varphi}_2 ) = -  \overline{ d(\varphi_2 \overline{\varphi}_1 )} =  \psi_3 \, ( \varphi_2 \,\overline{\varphi}_2 - \varphi_1\,\overline{\varphi}_1) .$$
The exterior differentiation of the third equation of (\ref{eq:6.9}) implies
$$ 0 = d^2\varphi_3 = a_2 \,d (\varphi_2 \overline{\varphi}_1) - a_1 \,d(\varphi_1 \overline{\varphi}_2) = (a_2\overline{\psi}_3 - a_1\psi_3) \wedge (\varphi_2 \,\overline{\varphi}_2 - \varphi_1\,\overline{\varphi}_1) .$$
Note that
$$ a_2\overline{\psi}_3 - a_1\psi_3 = a_2(a_1 \overline{\varphi}_3 + a_2 \varphi_3) - a_1 (a_1 \varphi_3 + a_2 \overline{\varphi}_3 ) = (a_2^2-a_1^2) \varphi_3, $$
which yields a contradiction. This shows that the case of distinct $a_1, a_2$ and $a_3$ cannot occur.
\vspace{0.15cm}

\noindent {\bf Case 2:} $a_1=a_2=a_3$.

\vspace{0.1cm}
Let us denote by $a>0$ the common value of those $a_i$ in this case. Equalities in (\ref{eq:aithetab})  now imply that $\theta^b$ is skew-symmetric. Since $\psi_i=a(\varphi_i+\overline{\varphi}_i) = \overline{\psi}_i$ for each $i$, by (\ref{eq:6.7}) we know that $\gamma$ is skew-symmetric. So $\theta= \theta^b-\gamma$ is also skew-symmetric, and we have
\begin{equation} \label{eq:6.10}
 \theta = \begin{bmatrix} 0 & x  & y \\ -x & 0 & z \\ - y & -z &  0 \end{bmatrix}\!\!, \ \ \ \ \tau = a \begin{bmatrix} \varphi_2\varphi_3 \\ \varphi_3\varphi_1 \\ \varphi_1\varphi_2 \end{bmatrix}\!\!,
 \end{equation}
where $x$, $y$, $z$ are real $1$-forms. As a result, the Chern curvature matrix $\Theta =d\theta - \theta \wedge \theta $ is also skew-symmetric. The structure equation for the Chern connection gives us
\begin{equation*}
 d \varphi = - \,^t\!\theta \wedge \varphi + \tau =
\begin{bmatrix} x \varphi_2 + y \varphi_3 +a\varphi_2\varphi_3 \\ -x \varphi_1 + z \varphi_3 +a\varphi_3\varphi_1 \\  -y \varphi_1 -z \varphi_2 +a\varphi_1\varphi_2  \end{bmatrix}\!\!.
\end{equation*}
It follows that
\begin{eqnarray*}
d (\varphi_2\varphi_3) & = & (x\varphi_3-y\varphi_2)\varphi_1 \\
d (\varphi_3\varphi_1) & = & (x\varphi_3+ z\varphi_1)\varphi_2 \\
d (\varphi_2\varphi_3) & = & (-y\varphi_2+z\varphi_1)\varphi_3.
\end{eqnarray*}
On one hand, if we let $\xi = x\varphi_3 -y\varphi_2 + z\varphi_1$, then the above equations simply say that $d\tau = a \,\xi \wedge \varphi$. On the other hand, by (\ref{eq:6.10}) we get $\theta \tau = a\,\xi \wedge \varphi$. So by the first Bianchi identity
$$ d\tau = \,^t\!\Theta \varphi - \,^t\!\theta \tau, $$
we conclude that $ \,^t\!\Theta \varphi = 0$. This means that the Hermitian threefold $M^3$ is Chern K\"ahler-like, that is, the Chern curvature tensor $R^c$ obeys the K\"ahler symmetry $R^c_{i\overline{j}k\overline{\ell}} = R^c_{k\overline{j}i\overline{\ell}} $ for any $i,j,k,\ell$.

We claim that in this case the metric $g$ must be Chern flat, namely, $R^c=0$. If $\{ i,k\} \cap \{ j,\ell\} \neq \emptyset$, say for instance $1$ is contained in the intersection, then by the K\"ahler symmetry, $R^c_{i\overline{j}k\overline{\ell}}$ can be written as $R^c_{a\overline{b}1\overline{1}}$, which has to vanish since $\Theta_{11}=0$ as $\Theta$ is skew-symmetric. When $\{ i,k\} \cap \{ j,\ell\} = \emptyset$, what we need to show are the equalities $R^{c}_{i\overline{j}k\overline{j}}=0$ and $R^{c}_{i\overline{j}i\overline{j}}=0$ where $i,j,k$ are distinct, as the dimension is $3$. From the skew-symmetry of $\Theta$, it yields that
$$ R^c_{i\overline{j}k\overline{j}} = - R^c_{i\overline{j}j\overline{k}} = -R^c_{i\overline{k}j\overline{j}}=0,$$
$$ R^c_{i\overline{j}i\overline{j}} = - R^c_{i\overline{j}j\overline{i}} = -R^c_{i\overline{i}j\overline{j}}=0.$$
So $(M^3,g)$ is indeed Chern flat in this case. Under the special frame $e$, the tensor $B$ equals to
\[B =  \begin{bmatrix} c & 0 & 0 \\ 0 & c & 0 \\ 0 & 0 & c   \end{bmatrix}\!\!,\]
where $c=2a^2>0$, so $B$ has rank $3$.

\vspace{0.15cm}

\noindent {\bf Case 3:} $a_1=a_2>a_3\,$ or $\,a_1>a_2=a_3$.

\vspace{0.1cm}

In this case $B$ has two distinct eigenvalues. First we will rule out the possibility of $a_3>0$. Suppose that $a_3>0$,
namely, either $a_1=a_2>a_3>0$ or $a_1>a_2=a_3>0$, we will derive a contradiction.

Since the argument for these two situations are analogous, let us focus on the case $a_1=a_2>a_3>0$. Write $a_1=a_2=a$ for simplicity.  Since the eigenspaces of $B$ are $\nabla^b$-parallel, it follows that $\theta^b_{13}=\theta^b_{23}=0$. By (\ref{eq:aithetab}), we get
$$ \theta^b =  \begin{bmatrix} 0  & \alpha & 0 \\ -\alpha & 0 & 0  \\ 0 & 0 & 0 \end{bmatrix}\!\!, $$
where $\overline{\alpha }= \alpha$. Write $\alpha' =\alpha + \psi_3$, where $\psi_3=a(\varphi_3+\overline{\varphi}_3)$ is real, and the structure equation of Chern connection amounts to
\begin{equation} \label{eq:6.11}
 d \varphi = - \,^t\!(\theta^b -\gamma ) \varphi + \tau =
\begin{bmatrix} \alpha' \varphi_2 + a_3 \varphi_3 \overline{\varphi}_2 \\ -\alpha' \varphi_1 - a_3 \varphi_3 \overline{\varphi}_1 \\  -a_3\varphi_1\varphi_2 +a (\varphi_2 \overline{\varphi}_1 - \varphi_1 \overline{\varphi}_2)  \end{bmatrix}\!\!.
\end{equation}
From the first two lines we obtain
\begin{eqnarray*}
d (\varphi_1\varphi_2) & = & d\varphi_1 \wedge \varphi_2 - \varphi_1 \wedge d\varphi_2 \ = \ -a_3 \varphi_3 (\varphi_1 \overline{\varphi}_1 + \varphi_2 \overline{\varphi}_2),\\
d (\varphi_1\overline{\varphi}_2) & = & d\varphi_1\wedge \overline{\varphi}_2 - \varphi_1\wedge d\overline{\varphi}_2 \ = \ \alpha' ( \varphi_2 \overline{\varphi}_2 - \varphi_1 \overline{\varphi}_1 ).
\end{eqnarray*}
Here we used the fact $\overline{\alpha'}=\alpha'$. In particular,
$$ d(\varphi_2 \overline{\varphi}_1 ) = - \overline{d(\varphi_1 \overline{\varphi}_2) } = \alpha ' ( \varphi_2 \overline{\varphi}_2 - \varphi_1 \overline{\varphi}_1 ) = d(\varphi_1 \overline{\varphi}_2) . $$
Take the exterior differentiation of the third equation of (\ref{eq:6.11}), we get
$$ 0 = d^2\varphi_3 = -a_3 \,d(\varphi_1\varphi_2) + a\, d(\varphi_2 \overline{\varphi}_1 - \varphi_1 \overline{\varphi}_2) = a_3^2  \varphi_3 (\varphi_1 \overline{\varphi}_1 + \varphi_2 \overline{\varphi}_2),$$
which is a contradiction. This shows that the condition $a_3>0$ cannot occur,  therefore we are  either in the situation $a_1>0=a_2=a_3$ or in the situation $a_1=a_2>0=a_3$, which implies
\[B=\begin{bmatrix} c & 0 & 0 \\ 0 & 0 & 0 \\ 0 & 0 & 0   \end{bmatrix}\ \ \text{or}\ \
\begin{bmatrix} c & 0 & 0 \\ 0 & c & 0 \\ 0 & 0 & 0   \end{bmatrix}\!\!,\]
where $c=2a_1^2>0$, and the rank of $B$ is either $1$ or $2$. This completes the proof of Proposition \ref{Btype}.
\end{proof}

\begin{example}[The simple complex Lie group $SL(2,{\mathbb C})$] \label{exampleSL2C}
 Let us consider the only simple complex Lie algebra ${\mathfrak g}={\mathfrak s}{\mathfrak l}(2,{\mathbb C})$ in complex dimension $3$. It consists of all complex $2\times 2$ matrices with zero trace. Take
\begin{equation*}
X = \left[ \begin{array}{cc} -i & 0 \\ 0 & i \end{array} \right] , \ \ \ \ \ Y = \left[ \begin{array}{cc} 0 & -i \\ -i & 0 \end{array} \right] , \ \ \ \ \ Z= \left[ \begin{array}{cc} 0 & 1 \\ -1 & 0 \end{array} \right] .
\end{equation*}
Then $\{ X, Y, Z\}$ forms a  basis of ${\mathfrak g}$ satisfying
\begin{equation} \label{sl2C}
[X, Y] =-2Z, \ \ \ \ \ [Y,Z]=-2X, \ \ \ \  [Z,X]=-2Y.
\end{equation}
Let $g_0$ be the inner product on the complex vector space ${\mathfrak g}$ so that $\{ X, Y, Z\}$ becomes a unitary basis. Then it corresponds to a left-invariant metric on the Lie group $G=SL(2,{\mathbb C})$, which we still denote by $g_0$, compatible with the complex structure of $G$. This is the {\em standard metric} of $G$. Clearly $g_0$ is Chern flat and non-K\"ahler, and it can be verified that it is also BTP and balanced. It is the metric induced by the Killing form.
\end{example}

Next we show that when the rank of the  $B$ tensor is $3$, the balanced BTP threefold must be a quotient of $SL(2,{\mathbb C})$ equipped with (a constant multiple of) the standard Killing metric $g_0$.

\begin{proposition}
Let $(M^3,g)$ be a compact balanced BTP threefold. Assume that the rank of the $B$ tensor is $3$. Then $(M^3,g)$ is holomorphically isometric to a compact quotient of the simple complex Lie group $SL(2,{\mathbb C})$ equipped with (a scaling of) the standard metric $g_0$. Furthermore, the canonical line bundle of $M^3$ is holomorphically trivial, and the restricted holonomy group of the Bismut connection $\mbox{Hol\,}_0^{b}(M)$ is contained in $ SO(3)\subseteq U(3)$.
\end{proposition}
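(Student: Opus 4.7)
The rank-$3$ hypothesis places us in Case 2 of the proof of Proposition \ref{Btype}, which already yields two crucial facts: under every special frame $a_1 = a_2 = a_3 = a > 0$, and the metric $g$ is Chern flat. My plan is to identify the universal cover explicitly from the Chern flatness and then read off the remaining claims directly from the constant torsion.

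Since $(M^3, g)$ is compact and Chern flat, it is complex parallelizable, so by the classical theorem of Boothby the universal cover $\widetilde{M}^3$ is biholomorphic to a simply-connected complex Lie group $G$ carrying a left-invariant Hermitian metric under which every left-invariant frame is Chern-parallel. Taking a special unitary frame at one point and extending by left translation produces a global left-invariant unitary frame $\{e_1, e_2, e_3\}$ on $G$ whose Chern torsion keeps the constant components $T^1_{23} = T^2_{31} = T^3_{12} = a$ and vanishes otherwise. From $\nabla^c e_i = 0$ we get $[e_i, e_k] = -T(e_i, e_k)$, which gives
\[ [e_1, e_2] \;=\; -a\, e_3, \qquad [e_2, e_3] \;=\; -a\, e_1, \qquad [e_3, e_1] \;=\; -a\, e_2. \]
Setting $e_i = (a/2) X_i$ in (\ref{sl2C}) identifies this algebra with $\mathfrak{sl}(2, \mathbb{C})$, so $G \cong SL(2, \mathbb{C})$ and the metric making $\{e_i\}$ unitary equals the constant multiple $(2/a)^2 g_0$ of the standard metric of Example \ref{exampleSL2C}. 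Compactness of $M^3$ then forces $M^3 = \Gamma \backslash G$ for some cocompact discrete subgroup $\Gamma$.

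For triviality of $K_{M^3}$ I would exploit semisimplicity. The left-invariant holomorphic $(3,0)$-form $\Omega = \varphi_1 \wedge \varphi_2 \wedge \varphi_3$ on $G$ is bi-invariant because $SL(2, \mathbb{C})$ is unimodular, so it descends through the quotient $\Gamma \backslash G$ to a nowhere-vanishing holomorphic section of $K_{M^3}$.

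For the Bismut holonomy I would compute $\theta^b$ directly in the special frame. Formula (\ref{eq:6.10}) shows that the Chern connection matrix $\theta$ is real and skew-symmetric, while (\ref{eq:6.7})--(\ref{eq:6.8}) with $a_1 = a_2 = a_3 = a$ give $\psi_i = a(\varphi_i + \overline{\varphi}_i) \in \mathbb{R}$, so $\gamma = \theta^b - \theta$ is also real and skew-symmetric. Hence $\theta^b$ takes values in $\mathfrak{so}(3) \subseteq \mathfrak{u}(3)$, which forces $\mbox{Hol}_0^{\,b}(M) \subseteq SO(3)$. The only non-routine ingredient is the appeal to Boothby's structural theorem; once the complex Lie group presentation is in hand, the algebra identification, the canonical bundle triviality, and the holonomy reduction are all immediate consequences of the constant torsion and the manifest reality of $\theta$ and $\gamma$.
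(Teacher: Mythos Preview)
Your identification of the universal cover with $SL(2,\mathbb{C})$ and the holonomy reduction to $SO(3)$ match the paper's proof essentially line by line.

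For the triviality of $K_{M^3}$, however, you take a different route from the paper and there is a gap. You write ``$M^3=\Gamma\backslash G$ for some cocompact discrete subgroup $\Gamma$'' and then invoke bi-invariance of $\Omega$ to get descent. But Boothby's theorem only tells you that the deck group acts by holomorphic \emph{isometries} of $(G,\tilde g)$; it does not place $\Gamma$ inside $G$ as left translations. Since the metric you have produced on $SL(2,\mathbb{C})$ is (a multiple of) the bi-invariant Killing metric, its holomorphic isometry group is strictly larger than $G$ acting by left translations, so bi-invariance alone does not immediately force $\gamma^\ast\Omega=\Omega$ for every deck transformation $\gamma$. (A related imprecision: unimodularity gives only $|\det\mathrm{Ad}_g|=1$; the bi-invariance of the \emph{holomorphic} $3$-form needs $\det_{\mathbb{C}}\mathrm{Ad}_g=1$, which follows from semisimplicity, the property you mentioned first.) The gap can be repaired---for instance by checking that any holomorphic isometry carries the left-invariant $\nabla^c$-parallel frame to another such frame via a constant Lie-algebra automorphism, and that $\mathrm{Aut}(\mathfrak{sl}(2,\mathbb{C}))=\mathrm{Ad}(G)\subset SL(\mathfrak{g})$---but you have not done so.

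The paper avoids this issue entirely by working on $M$ itself: it shows that any two local \emph{special} frames differ by an $SO(3)$-valued transition, so $\varphi_1\wedge\varphi_2\wedge\varphi_3$ is independent of the choice of special frame and hence globally defined on $M$; closedness then follows from $\mathrm{tr}\,\theta=0$. That argument is both shorter and more robust, and as the paper remarks, it in fact proves triviality of $K_M$ for quotients by \emph{any} discrete group of holomorphic isometries of $g_0$.
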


\begin{proof}
In the proof of Case 2 in Proposition \ref{Btype}, we already know that $(M^3,g)$ is a compact Chern flat threefold. By \cite{Boothby}, the universal cover of $M^3$ is a connected, simply-connected complex Lie group $G$, and the metric $\tilde{g}$ which is the lift of $g$ is  left-invariant and is  compatible with the complex structure of $G$. Let $\{ \varepsilon_1, \varepsilon_2, \varepsilon_3\}$ be a left-invariant $\nabla^c$-parallel unitary frame of $G$. Denote by $T^j_{ik}$ the components of the Chern torsion under the frame $\varepsilon$, which are all constants. As $\tilde{g}$ is also non-K\"ahler balanced  BTP, after a constant unitary transformation, we may assume that $\varepsilon$ is a special frame on $G$ and $a_1=a_2=a_3=a>0$ under this frame. Hence the Lie algebra of $G$ is ${\mathfrak g}={\mathbb C}\{ \varepsilon_1,\varepsilon_2,\varepsilon_3\}$, satisfying
$$ [\varepsilon_1,\varepsilon_2]=-a\varepsilon_3, \ \ \ [\varepsilon_2,\varepsilon_3]=-a\varepsilon_1, \ \ \ [\varepsilon_3,\varepsilon_1]=-a\varepsilon_2. $$
Therefore, after scaling the metric by a constant multiple, $G$ is holomorphically isomorphic to $SL(2,{\mathbb C})$. Again from the proof of Case 2 in Proposition \ref{Btype},  we know that the matrix $\theta^b$ is skew-symmetric, so the curvature matrix $\Theta^b$ is also skew-symmetric, hence the restricted holonomy group of the Bismut connection $\nabla^b$ is contained in $SO(3)\subseteq U(3)$.

Next let us prove that $M^3$ must have trivial canonical line bundle, or equivalently, $M^3$ admits a global holomorphic $3$-form which is nowhere zero. To see this, let $e$, $\tilde{e}$ be two special frames in a neighborhood $U\subseteq M$, with dual coframes $\varphi$, $\tilde{\varphi}$ respectively, satisfying $\tilde{e}=Pe$ for some $U(3)$-valued smooth function $P$ on $U$. Denote by $T^i_{jk}$ and $\tilde{T}^i_{jk}$ the components of the Chern torsion under the frame $e$, $\tilde{e}$ respectively. We have $T^1_{23}=T^2_{31}=T^3_{12}=a>0$, $T^i_{ij}=0$ for any $i$, $j$, and the same holds for $\tilde{T}^i_{jk}$. Let $S$ be the $3\times 3$ matrix with $S_{ij}$ being the determinant of the $2\times 2$ matrix obtained by deleting the $i$-th row and $j$-th column in $P$. That is,  $S=(\det P) \,^t\!P^{-1}$. From $\tilde{T}^i_{jk} = \sum_{\alpha, \beta ,\gamma } \overline{P_{i\alpha }} P_{j\beta } P_{k \gamma } T^{\alpha}_{\beta \gamma }$, we derive
$$ \sum_{\alpha} \overline{P_{i\alpha }}S_{j \alpha } = \delta_{ij}, \ \ \ \forall \ 1\leq i,j\leq 3. $$
That is, $P^{\ast}S=I$, where $P^{\ast}$ is the conjugate transpose of $P$. Taking determinant, we get $\det P=1$, thus $^t\!P P=I$ so $P$ is actually an $SO(3)$-valued function. Therefore we have proved that in the $\mbox{rank}\,B=3$ case the special frames are related by $SO(3)$ changes. Since $\tilde{\varphi}=\,^t\!P^{-1}\varphi = P\varphi$, we see that the $(3,0)$-form
$$\psi := \tilde{\varphi}_1 \wedge \tilde{\varphi}_2\wedge \tilde{\varphi}_3 = \det P \,\varphi_1\wedge \varphi_2 \wedge \varphi_3= \varphi_1\wedge \varphi_2 \wedge \varphi_3$$
is independent of the choice of special frames thus is globally defined on $M$. By (\ref{eq:6.10}), $d\psi=\mbox{tr}(\theta) \psi =0$, which implies that $\psi$ is a global nowhere vanishing holomorphic $3$-form on $M$, hence the canonical line bundle $K_{\!M}$ is trivial. This completes the proof of the proposition.
\end{proof}

\begin{remark}
The proof above indicates that any compact smooth quotient $M=SL(2,{\mathbb C})/\Gamma $, where $\Gamma$ is a discrete subgroup of the group of holomorphic isometries for $g_0$, will have trivial canonical line bundle, so its unrestricted holonomy group for the  Chern connection is contained in $SU(3)$.
\end{remark}

This completes the proof of the  Chern flat case of Theorem \ref{thm1.1}. In the next two sections, we  will  deal with the case $\mathrm{rank}\,B=1$, which leads to the Wallach threefold $(X,g)$ where the underlying space is the flag threefold $\mathbb{P}(T_{\mathbb{P}^2})$, the projectivization of the holomorphic tangent bundle of ${\mathbb P}^2$, and the metric is the one induced from the Killing form of
${\mathfrak s}{\mathfrak u}(3)$. The case $\mathrm{rank}\,B=2$ will be called the {\em middle type}, which constitutes the main body of balanced BTP threefolds. They will be discussed in \S \ref{mddtype3D} in detail. Putting these three cases together, we will complete the proof of Theorem \ref{thm1.1}.

\vspace{0.4cm}

\section{The Fano case}\label{FANO}
In this section, we deal with the case $\mathrm{rank}\,B=1$.
Let $(M^3,g)$ be a compact, non-K\"ahler, balanced BTP threefold with $B$ tensor
\[ B= \begin{bmatrix} 2a_1^2 & 0 & 0 \\ 0 & 0 & 0 \\ 0 & 0 & 0 \end{bmatrix}\!\!,\]
under a special frame $e$, where the only non-zero component of the Chern torsion tensor is $a_1=T^1_{23} >0$. We may assume for simplicity that $a_1=1$ after  scaling  the metric $g$ by a suitable constant multiple.

From (\ref{eq:aithetab}) we get $\theta^b_{12}=\theta^b_{13}=0$ and $\theta^b_{11}=\theta^b_{22} +\theta^b_{33}$. Then it follows that
\begin{equation*}
\theta^b = \begin{bmatrix} x+y & 0  & 0 \\ 0 & x & \alpha \\ 0 & -\overline{\alpha} &  y \end{bmatrix}\!\!, \ \ \ \
\gamma =  \begin{bmatrix} 0 & -\overline{\varphi}_3  & \overline{\varphi}_2 \\ \varphi_3 & 0 & 0 \\ -  \varphi_2 & 0 &  0 \end{bmatrix}\!\!, \ \ \
\tau = \begin{bmatrix} \varphi_2\varphi_3 \\ 0 \\ 0  \end{bmatrix}\!\!,
\end{equation*}
where $\overline{x}=-x$ and $\overline{y}=-y$. From $\theta = \theta^b-\gamma$ and $d\varphi = -\,^t\!\theta \wedge \varphi +\tau $, it yields that
\begin{equation*}
\theta = \begin{bmatrix}  x+y & \overline{\varphi}_3  & -\overline{\varphi}_2 \\ -\varphi_3 & x & \alpha \\ \varphi_2 & -\overline{\alpha} &  y \end{bmatrix}\!\!, \ \ \ \ d\varphi  = \begin{bmatrix}  -(x+y)\varphi_1 - \varphi_2\varphi_3 \\ \varphi_1 \overline{\varphi}_3 - x \varphi_2 +\overline{\alpha} \varphi_3  \\ - \varphi_1 \overline{\varphi}_2 - \alpha \varphi_2 -y \varphi_3  \end{bmatrix}\!\!.
\end{equation*}
The matrix of the curvature of $\nabla^b$ is $\Theta^b=d \theta^b - \theta^b \wedge \theta^b$, whose entries are
\begin{equation} \label{eq:7.1}
\left\{ \begin{array}{lll}  \Theta^b_{22} \ = \ dx + \alpha \overline{\alpha}, \ \ \ \ \ \Theta^b_{33} \ = \ dy - \alpha \overline{\alpha}, \\
 \Theta^b_{23} \ = \ d\alpha  - x \alpha - \alpha y, \ \ \  \Theta^b_{12} \ = \ \Theta^b_{13} \ = \ 0, \\
 \Theta^b_{11} \ = \ dx+dy \ = \  \Theta^b_{22}+ \Theta^b_{33}.
 \end{array} \right.
\end{equation}
For convenience, we will use $\varphi_{ij}$ and $\varphi_{i\bar{j}}$ as the abbreviation of $\varphi_i \wedge \varphi_j$ and $\varphi_i \wedge \overline{\varphi}_j$, respectively. From the exterior differentiation $d\varphi$ of $\varphi$, the first Bianchi identity of $\nabla^b$ amounts to
\begin{equation} \label{eq:7.2}
\left\{ \begin{array}{lll} 0 =  d^2\varphi_1 \ = \ \{ \varphi_{2\overline{2}} + \varphi_{3\overline{3}} - \Theta^b_{11} \} \wedge \varphi_1, \\
0 = d^2\varphi_2 \ = \ \{  \varphi_{1\overline{1}} -  \varphi_{3\overline{3}} - \Theta^b_{22} \} \wedge \varphi_2 - \Theta^b_{32}\wedge \varphi_3, \\
0 = d^2\varphi_3 \ = \ \{  \varphi_{1\overline{1}} -  \varphi_{2\overline{2}} - \Theta^b_{33} \} \wedge \varphi_3 - \Theta^b_{23}\wedge \varphi_2.
\end{array} \right.
\end{equation}
It follows from \cite[Theorem 1.1]{ZhaoZ24} that the BTP condition $\nabla^bT=0$ implies
$$ R^b_{ijk\overline{\ell}} =0 \ \ \ \mbox{and} \ \ \  R^b_{i\overline{j}k\overline{\ell}} = R^b_{k\overline{\ell}i\overline{j}}, \ \ \ \ \ \forall \ 1 \leq i,j,k,\ell \leq n. $$
So by $\Theta^b_{12}=\Theta^b_{13}=0$ and $\Theta^b_{11}= \Theta^b_{22} + \Theta^b_{33}$, we get
$$ R^b_{1\overline{b}i \overline{j}} = R^b_{i \overline{j}1\overline{b}}=0, \ \ \ \ \ R^b_{1 \overline{1}i\overline{j}} = R^b_{i\overline{j}1 \overline{1}} = R^b_{i\overline{j}2 \overline{2}} + R^b_{i\overline{j}3 \overline{3}}
= R^b_{2 \overline{2}i\overline{j}} + R^b_{3 \overline{3}i\overline{j}} $$
for any $i$, $j$ and any $b\in \{ 2,3\}$. Write
\begin{eqnarray*}
\Theta^b_{22} & = & A \varphi_{2\overline{2}} + B \varphi_{3\overline{3}} + E \varphi_{2\overline{3}} + \overline{E} \varphi_{3\overline{2}} + (A+B) \varphi_{1\overline{1}} \\
\Theta^b_{33} & = & B \varphi_{2\overline{2}} + C \varphi_{3\overline{3}} + F \varphi_{2\overline{3}} + \overline{F} \varphi_{3\overline{2}} + (B+C) \varphi_{1\overline{1}} \\
\Theta^b_{23} & = & E \varphi_{2\overline{2}} + F \varphi_{3\overline{3}} + D \varphi_{2\overline{3}} + G \varphi_{3\overline{2}} + (E+F) \varphi_{1\overline{1}} \\
\Theta^b_{11} & = & (A+B) \varphi_{2\overline{2}} + (B+C) \varphi_{3\overline{3}} + (E+F) \varphi_{2\overline{3}} + (\overline{E}+\overline{F}) \varphi_{3\overline{2}} + (A+2B+C) \varphi_{1\overline{1}}
\end{eqnarray*}
where $A,B,C,G$ are local real smooth functions. Then the first Bianchi identity (\ref{eq:7.2}) implies that
$$ E+F=0, \ \ \ \ A+B=B+C=G-B=1. $$
In particular,
\begin{equation}  \label{eq:7.3}
\Theta^b_{11} = 2\varphi_{1\overline{1}} + \varphi_{2\overline{2}} + \varphi_{3\overline{3}}
\end{equation}

\begin{remark} \label{remark7.1}
The pattern of the Bismut curvature above implies that the restricted holonomy group of $\nabla^b$ in this case is contained in $G_1 \subseteq U(3)$, where
$$  G_1 = \left\{ \begin{bmatrix} \det Y & 0 \\ 0 & Y \end{bmatrix} \, \Big| \ Y \in U(2) \right\}, $$
which is isomorphic to $U(2)$.
\end{remark}

Denote by $\Theta$ the curvature matrix of the Chern connection $\nabla^c$ under $e$. The balanced condition $\eta =0$ implies that
$$ \mathrm{tr} \,\Theta = \mathrm{tr}\, \Theta^b = 2\Theta^b_{11} .$$
The K\"ahler form of the metric $g$ is $\omega = \sqrt{-1} \big(  \varphi_1\overline{\varphi}_1 + \varphi_2\overline{\varphi}_2 +\varphi_3\overline{\varphi}_3 \big)$, and its first Chern Ricci form is
\begin{equation} \label{omegatilde}
 Ric(\omega) = \sqrt{-1} \mbox{tr}\,\Theta = 2 \sqrt{-1} \big( 2 \varphi_1\overline{\varphi}_1 + \varphi_2\overline{\varphi}_2 +\varphi_3\overline{\varphi}_3 \big) := 2\tilde{\omega},
 \end{equation}
which is positive definite. Since the Ricci form is always  closed and globally defined, $\tilde{\omega}$ is the K\"ahler form of a K\"ahler metric $\tilde{g}$ on $M^3$. Since $\tilde{\omega}^3=2\omega^3$, we have
\begin{equation} \label{omegatilde2}
 Ric(\tilde{\omega}) = Ric(\omega) = 2\tilde{\omega}.
 \end{equation}
That is,  $\tilde{\omega}$ is a K\"ahler-Einstein metric with positive Ricci curvature $2$, hence $M^3$ is a Fano threefold. Denote by $E$ and $F$ the $C^{\infty}$ complex vector bundles on $M$ whose fibers are  $E_x = {\mathbb C} \{ e_2(x), e_3(x)\}$ and $F_x={\mathbb C}  e_1(x)$, respectively. They are both globally defined since $E$ is the eigenspace of $B$ corresponding to the eigenvalue $0$, and $F$ is the orthogonal complement of $E$ in the holomorphic tangent bundle $T_{\!M}=T^{1,0}M$. For any $i\in \{ 2, 3\}$ and any $j$, we have
$$ \langle \nabla^c_{\overline{j}}e_i , \overline{e}_1 \rangle = \langle \nabla^b_{\overline{j}}e_i - \gamma_{\overline{j}}e_i , \overline{e}_1  \rangle = \theta^b_{i1}(\overline{e}_{j}) -  \gamma_{i1}(\overline{e}_{j}) = 0.$$
This means that $\nabla^c_{\overline{X}}E \subseteq E$ for any type $(1,0)$ vector field $X$, so $E$ is holomorphic.
Note that the distribution $E$ is not a foliation, while $F$, on the other hand, is a foliation but is not holomorphic.

Equip $E$ with the restriction metric from $(M^3,g)$. By the formula of Chern connection matrix $\theta$ on $M$, the matrices of connection and curvature of the Hermitian bundle $E$ under the local frame $\{ e_2, e_3\}$ are respectively
$$ \theta^E = \begin{bmatrix} x & \alpha \\ -\overline{\alpha} & y \end{bmatrix}\!\!, \ \ \ \  \ \Theta^E = d\theta^E -\theta^E \wedge \theta^E = \begin{bmatrix} \Theta^b_{22} & \Theta^b_{23} \\ \Theta^b_{32} & \Theta^b_{33} \end{bmatrix}\!\!.$$
In particular, $\sqrt{-1}\,\mathrm{tr}\,\Theta^E = \sqrt{-1}(\Theta^b_{22} + \Theta^b_{33}) = \frac{\sqrt{-1}}{2}\mathrm{tr}\,\Theta^b = \frac{\sqrt{-1}}{2}\,\mathrm{tr}\,\Theta = \tilde{\omega}$. This means that
\begin{equation} \label{eq:7.6}
 c_1(E) = \frac{1}{2} c_1(M) = [\tilde{\omega}].
\end{equation}
Denote by $L$ the holomorphic line bundle $T_{\!M}/E$ on $M$. We have  the short exact sequence
\begin{equation} \label{eq:7.7}
0 \rightarrow E \rightarrow T_{\!M} \rightarrow L \rightarrow 0.
\end{equation}
Let $h=c_1(L)$ be the first Chern class of $L$. The above short exact sequence implies that
$$ c_1(E)+h =c_1(M), \ \ \ c_2(E) + hc_1(E) = c_2(M), \ \ \ c_2(E)h =c_3(M).$$
So by (\ref{eq:7.6}) we obtain
\[c_1(E)=h, \quad c_1(M)=2h, \quad c_2(E)=c_2(M)-h^2, \quad c_2(M)h - h^3=c_3(M).\]
In particular, $L$ is an ample line bundle on $M$. The anti-canonical line bundle $K_{\!M}^{\!-\!1}=L^{\otimes 2}$ as holomorphic line bundles are uniquely determined by their Chern classes on Fano manifolds, and the Chern numbers of $M^3$ satisfy
\begin{equation}  \label{eq:7.8}
c_1(M)c_2(M) = 2h (c_2(E)+h^2) = 2c_3(M) + \frac{1}{4}c_1^3(M).
\end{equation}

Recall that the {\em index} of a Fano manifold $X^n$ is the largest positive integer $r$ so that $K^{\!-\!1}_{\!X} = A^{\otimes r}$ for an ample line bundle $A$. It is necessarily less than or equal to $n+1$, where $r=n+1$ if and only if $X={\mathbb P}^n$ and $r=n$ if and only if $X={\mathbb Q}^n$, the smooth quadratic hypersurface in ${\mathbb P}^{n+1}$. Fano manifolds satisfying $r=n-1$ are called {\em del Pezzo manifolds,} which are classified by Fujita \cite{Fujita} as one of the following seven types according to their {\em degree} $d$, which is the self intersection number $A^n$:
\begin{enumerate}
\item $d=1$: $X^n_6\subset {\mathbb P}(1^{n-1}, 2, 3)$, a degree $6$ hypersurface in the weighted projective space.
\item $d=2$: $X^n_4\subset {\mathbb P}(1^{n}, 2)$, a degree $4$ hypersurface in the weighted projective space.
\item $d=3$: $X^n_3\subset {\mathbb P}^{n+1}$, a cubic hypersurface.
\item $d=4$: $X^n_{2,2}\subset {\mathbb P}^{n+2}$, a complete intersection of two quadrics.
\item $d=5$: $Y^n$, a linear section of ${\mathbb Gr}(2,5) \subset {\mathbb P}^9$.
\item $d=6$: ${\mathbb P}^1\!\times \! {\mathbb P}^1\!\times \!{\mathbb P}^1$, or ${\mathbb P}^2\!\times \! {\mathbb P}^2$, or the flag threefold ${\mathbb P}(T_{ \!{\mathbb P}^{2}} )$.
\item $d=7$: ${\mathbb P}^3\# \overline{{\mathbb P}^3} $, the blow-up of ${\mathbb P}^3$ at a point.
\end{enumerate}

For $n=3$, del Pezzo threefolds were classified by Iskovskikh \cite{Iskov} earlier, and in Table 12.2 of \cite{IP} we can find the third betti number $b_3$, hence the Euler number $c_3=4-b_3$ of del Pezzo threefolds of degree $1\leq d\leq 5$:
\begin{equation} \label{eq:7.9}
c_3(X^3_6)=-38, \ \ \ c_3(X^3_4)=-16, \ \ \ c_3(X^3_3) = -6, \ \ \ c_3(X^3_{2,2})= 0, \ \ \ c_3(Y^3)= 4.
\end{equation}

For the balanced {\em BTP} threefold $(M^3,g)$ with $\mbox{rank}\,B=1$, it holds that $K_{\!M}^{\!-\!1}=L^{\otimes 2}$ for an ample line bundle $L$, so the index of $M^3$ is either $4$ or $2$, which means $M^3$ is biholomorphic to either ${\mathbb P}^3$ or a del Pezzo threefold. It is well-known that $c_1c_2=24$ holds for any Fano threefold, so the equality (\ref{eq:7.8}) implies that
\begin{equation} \label{eq:7.10}
c_3(M) = 12 - \frac{1}{8}c_1^3 .
\end{equation}
If $M^3$ is a del Pezzo threefold of degree $d$, then $c_1^3=8d$, hence the equality (\ref{eq:7.10}) yields $c_3=12-d$. This rules out the possibility of $1\leq d\leq 5$ by (\ref{eq:7.9}). The case ${\mathbb P}^3\# \overline{{\mathbb P}^3}$ of degree $d=7$ has Euler number $c_3=6$, which is not equal to $12-7$. Similarly, for the case ${\mathbb P}^1 \times {\mathbb P}^1 \times {\mathbb P}^1$ of degree $d=6$, its Euler number $c_3=8$ is not equal to $12-6$, which indicates that neither can be $M^3$. Therefore only two possibilities are left, namely, $M^3$ {\em is either the flag threefold  ${\mathbb P}(T_{{\mathbb P}^2} )$ or ${\mathbb P}^3$}.

\vspace{0.1cm}

Consider the short exact sequence (\ref{eq:7.7}) of holomorphic vector bundles on $M^3$, where $E$ has the fiber ${\mathbb C}\{ e_2,e_3\}$ under a special frame $e$. Denote by $\Omega$ the holomorphic cotangent bundle of $M^3$ and write $\Omega(L)=\Omega \otimes L$. Let $\xi \in H^0(M, \Omega (L))$ be the nowhere zero holomorphic section which gives the map $T_{\!M} \rightarrow L$ in (\ref{eq:7.7}). The K\"ahler-Einstein metric $\tilde{\omega}$ on $M^3$ naturally induces Hermitian metrics on $\Omega$ and $L=-\frac{1}{2}K_{\!M}$, hence on $\Omega (L)$. We claim the following:
\begin{claim}\label{cst_norm}
The norm $\parallel \!\xi \!\parallel^2$ is a constant under the K\"ahler-Einstein metric $\tilde{\omega}$.
\end{claim}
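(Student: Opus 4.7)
The plan is to identify $\xi$ explicitly under a special frame, verify that the identification is independent of the choice of special frame, and then read off $\|\xi\|^2$ directly from the formula for $\tilde\omega$.

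Under a special frame $e$ with dual coframe $\varphi$, the holomorphic subbundle $E$ has fiber $\mathbb{C}\{e_2,e_3\}$, so $L=T_M/E$ is locally generated by the class $[e_1]$, and the quotient map $T_M\to L$ defining $\xi$ sends $v=v^i e_i$ to $v^1[e_1]$. Thus locally
$$\xi = \varphi_1\otimes [e_1]\in \Omega\otimes L.$$
To verify this is globally well-defined, I would note that since $F=\mathbb{C}\{e_1\}$ and $E$ are the $\nabla^b$-parallel eigenspaces of $B$, any two special frames at a point differ by a unitary change $\tilde e=Pe$ with $P=\mathrm{diag}(u,V)$, $V\in U(2)$ and $u\in U(1)$. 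The normalization $\tilde T^1_{23}=T^1_{23}=1$ forces $u=\det V$, and then $\tilde\varphi_1=\bar u\,\varphi_1$ while $[\tilde e_1]=u[e_1]$, so
$$\tilde\varphi_1\otimes [\tilde e_1] = |u|^2\,\varphi_1\otimes [e_1] = \varphi_1\otimes [e_1].$$

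The norm is then immediate. The formula $\tilde\omega=\sqrt{-1}(2\varphi_1\bar\varphi_1+\varphi_2\bar\varphi_2+\varphi_3\bar\varphi_3)$ shows that $\{e_i\}$ remains $\tilde g$-orthogonal with $\|e_1\|_{\tilde g}^2=2$ and $\|e_2\|_{\tilde g}^2=\|e_3\|_{\tilde g}^2=1$. Hence the induced dual metric on $\Omega$ gives $\|\varphi_1\|^2=1/2$, while (since $F\perp_{\tilde g}E$ in this frame) the quotient metric on $L$ gives $\|[e_1]\|_L^2=\|e_1\|_{\tilde g}^2=2$. Therefore $\|\xi\|^2=\tfrac12\cdot 2=1$ pointwise, a constant.

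I do not foresee a real obstacle here. The only point to keep straight is distinguishing the Hermitian metric $g$ (for which $\{e_i\}$ is unitary) from the K\"ahler-Einstein metric $\tilde g$ used to measure $\xi$; the explicit formula for $\tilde\omega$ together with the fact that $\tilde\omega$ is diagonal in the special frame makes the bookkeeping transparent, and the invariance under $G_1$-valued frame changes already noted in Remark \ref{remark7.1} is exactly what ensures the globality of $\varphi_1\otimes[e_1]$.
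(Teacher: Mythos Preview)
Your argument is correct, and it is genuinely different from the paper's. The paper writes $\xi=f\varphi_1\otimes s$ for a local \emph{holomorphic} frame $s$ of $L$, uses the structure equation to identify $(x+y)^{0,1}=\overline{\partial}\log f$, and then combines $\Theta^b_{11}=\partial\overline{\partial}\log|f|^2$ with the curvature equation $\sqrt{-1}\,\Theta^L=\tilde\omega$ (coming from the metric on $L\cong -\tfrac12 K_M$) to conclude that $\log\|\xi\|^2$ is pluriharmonic, hence constant. Your route bypasses all of this: you exhibit $\xi=\varphi_1\otimes[e_1]$ as a frame-invariant object (using the transition rule $P=\mathrm{diag}(\det V,\,V)$ for special frames, which is exactly the content of Remark~\ref{remark7.1}) and read off the norm pointwise from the explicit diagonal form of $\tilde\omega$. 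This is shorter and more transparent.

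One point deserves a flag. You are measuring $[e_1]$ with the \emph{quotient} metric on $L=T_M/E$ induced by $\tilde g$, whereas the paper equips $L$ with the metric pulled back along the isomorphism $L\cong -\tfrac12 K_M$ (this is what makes $\sqrt{-1}\,\Theta^L=\tilde\omega$ hold on the nose). Both are natural readings of ``the metric on $L$ induced by $\tilde\omega$'', and your proof is complete for yours. But the next step in the paper --- ruling out $\mathbb{P}^3$ --- computes $\|\xi\|^2$ using the Fubini--Study metric on $\mathcal{O}(2)=-\tfrac12 K_{\mathbb{P}^3}$, i.e.\ the paper's choice. If you keep your metric, that computation would have to be redone with the quotient metric on $T_{\mathbb{P}^3}/E$ (which depends on the unknown $E$), or you should add a line explaining why the two metrics on $L$ differ only by a constant so that constancy of $\|\xi\|^2$ transfers.
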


\begin{proof}
To see this, let $e$ be a local special frame of $(M^3,g)$, with dual coframe $\varphi$. Let $s$ be a local holomorphic frame of $L$. Since $\xi$ is a $L$-valued $1$-form, we have $\xi = \psi \otimes s$ where $\psi$ is a nowhere-zero local holomorphic $1$-form. The kernel of the map given by $\xi$ is $E$, so $\psi = f\varphi_1$ for some local smooth function $f$, which is nowhere zero. By the structure equation, it follows that $d\varphi_1=-(x+y)\varphi_1 - \varphi_2\varphi_3$, so the holomorphicity of $\psi$ gives us
$$ 0 = \overline{\partial} \psi =  \overline{\partial}f \wedge \varphi_1 - f \,(x+y)^{0,1}\wedge \varphi_1. $$
Hence $(x+y)^{0,1}=\overline{\partial}\log f$. By the fact that $\overline{x}=-x$ and $\overline{y}=-y$, we get
$$ x+y = - \partial \log \overline{f} + \overline{\partial}\log f, \ \ \ \  \frac{1}{\sqrt{-1}}\tilde{\omega} = \Theta^b_{11} = d(x+y) = \partial \overline{\partial}\log |f|^2. $$
On the other hand,  $L=-\frac{1}{2}K_{\!M}$ is equipped with the induced metric from $\tilde{\omega}$, so we have
$$ \frac{1}{\sqrt{-1}} \tilde{\omega} = \Theta^L = - \partial \overline{\partial}\log \parallel \!\!s\!\!\parallel ^2. $$
Combine the above two equations, we obtain
\begin{equation} \label{eq:pluriclosedness}
 \partial \overline{\partial}\log (|f|^2 \!\parallel\!\!s\!\!\parallel^2) = 0.
 \end{equation}
On the other hand, since $\{ \sqrt{2} \varphi_1, \varphi_2, \varphi_3\}$ is a local unitary coframe for $\tilde{\omega}$, the norm square of $\varphi_1$ under $\tilde{\omega}$ is $\frac{1}{2}$, therefore we have
$ \parallel\!\!\xi\!\!\parallel^2=\frac{1}{2}\,|f|^2\parallel\!\!s\!\!\parallel^2$.
It is a global positive function on $M^3$, and its log is pluriclosed by (\ref{eq:pluriclosedness}), hence it must be a constant. This establishes the claim.
\end{proof}

Then Claim \ref{cst_norm} rules out the possibility of ${\mathbb P}^3$:
\begin{claim}
The expression $\|\xi\|^2$ above cannot be a constant, if $M^3={\mathbb P}^3$ endowed with the standard K\"ahler-Einstein metric.
\end{claim}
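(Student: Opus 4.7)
The plan is to exploit the explicit description of $H^0(\mathbb{P}^3, \Omega^1(2))$ via the Euler sequence, combine it with Claim~\ref{cst_norm} to force $\xi$ into a very rigid algebraic form, and then derive a contradiction with the BTP structural equations of~\S\ref{FANO}.

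First, I would use the twisted Euler exact sequence
\[
0 \longrightarrow \Omega^1(2) \longrightarrow \mathcal{O}(1)^{\oplus 4} \longrightarrow \mathcal{O}(2) \longrightarrow 0, \qquad (L_0,\ldots,L_3)\longmapsto \sum_i x_iL_i,
\]
to identify $H^0(\mathbb{P}^3,\Omega^1(2))$ with the space of skew-symmetric $4\times 4$ complex matrices: to each skew $A=(a_{ij})$ one associates the section $\xi_A$ whose components are $L_i=\sum_j a_{ij}x_j$, and $\xi_A$ is nowhere vanishing exactly when $A$ has maximal rank $4$. Equipping $\mathcal{O}(1)^{\oplus 4}$ with the direct-sum Fubini--Study metric — which restricts on the subbundle to the $\tilde{\omega}$-induced metric on $\Omega^1(2)$ up to a positive scalar — one obtains the pointwise formula
\[
\|\xi_A\|^2_{[x]} \;=\; \mathrm{const}\cdot\frac{|Ax|^2}{|x|^2}.
\]

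Second, assuming the conclusion of Claim~\ref{cst_norm}, this function is a positive constant on $\mathbb{P}^3$, so $A^*A = cI_4$ for some $c>0$. Combined with $A^T = -A$, this yields $A\,\overline{A} = -cI_4$; the canonical form for skew-symmetric complex matrices with equal singular values then forces $A$ to be equivalent (under $A\mapsto UAU^T$ with $U\in U(4)$) to a scalar multiple of the standard symplectic matrix $J_0$. Consequently, up to a unitary change of homogeneous coordinates and a positive scalar, $\xi$ coincides with the standard contact $1$-form $x_0\,dx_1 - x_1\,dx_0 + x_2\,dx_3 - x_3\,dx_2$, and $E = \ker\xi$ is the standard holomorphic contact distribution on $\mathbb{P}^3$, with stabilizer group $\mathrm{Sp}(2)\subset SU(4)$.

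Third, the decisive step is to contradict this with the BTP and balanced conditions. A compatible $g$ on $\mathbb{P}^3$ whose Chern torsion has kernel $E$ is strongly constrained by its symmetry group, and after accounting for the stabilizer one ends up checking a finite-dimensional family of $\mathrm{Sp}(2)$-invariant Hermitian metrics on $\mathbb{P}^3 = \mathrm{Sp}(2)/U(2)$. A direct computation of the Chern torsion and Bismut curvature for each candidate shows that the BTP condition $\nabla^b T=0$ together with the curvature identity $\Theta^b_{11} = 2\varphi_{1\bar 1}+\varphi_{2\bar 2}+\varphi_{3\bar 3}$ derived in~\S\ref{FANO} cannot simultaneously hold; equivalently, the structure equation $d\varphi_1 = -(x+y)\varphi_1 - \varphi_2\wedge\varphi_3$ is incompatible with the explicit form $\varphi_1\propto(dz_1 + z_2\,dz_3 - z_3\,dz_2)/(1+|z|^2)$ forced by the contact normalization. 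The main obstacle is this final curvature/torsion computation.
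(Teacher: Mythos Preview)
Your route diverges from the paper's. The paper writes out $\|\xi\|^2$ in the affine chart $U_0$ and argues by comparing degrees that the resulting expression can never equal $c(1+|z|^2)$. You instead identify $H^0(\mathbb P^3,\Omega^1(2))$ with skew $4\times 4$ matrices via the Euler sequence and assert the closed formula $\|\xi_A\|^2=\mathrm{const}\cdot|Ax|^2/|x|^2$.

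Your formula is in fact correct, and it exposes a slip in the paper's computation. For a $(1,0)$-form $\alpha=\sum\alpha_i\,dz_i$, the norm with respect to $\tilde g$ is
\[
\|\alpha\|^2=\tfrac{1}{2}(1+|z|^2)\Big(\textstyle\sum_i|\alpha_i|^2+\big|\sum_i\alpha_i z_i\big|^2\Big),
\]
whereas the paper uses $\big|\sum_i\alpha_i\bar z_i\big|^2$ in the last term (a conjugation error in contracting with the inverse metric; one can check directly with a unitary frame at $z=(0,1,i)$ that the paper's expression gives the wrong value for $\xi=Z_0dZ_1-Z_1dZ_0+Z_2dZ_3-Z_3dZ_2$). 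With the correct expression one has $\sum_i\ell_i z_i=a_1z_1+a_2z_2+a_3z_3$ identically, since the $a_{ij}$-terms cancel by skew-symmetry; hence the paper's ``degree-4'' obstruction vanishes, and indeed $\|\xi_A\|^2$ is constant precisely when $A^*A=cI_4$. So the Claim as literally stated is false, and your reduction to the standard contact form on $\mathbb P^3$ is genuine, not an artifact.

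That said, your proposal still has a real gap. After reducing to the contact $\xi$, you only \emph{assert} that the balanced BTP structure equations of \S\ref{FANO} (in particular \eqref{eq:7.3}) are incompatible with $E=\ker\xi$ on $\mathbb P^3$, and you explicitly flag this ``final curvature/torsion computation'' as the main obstacle. Since Claim~\ref{cst_norm} alone no longer excludes $\mathbb P^3$, this step is now the entire content of what must be proved, and without it the argument is incomplete. A workable line would be to exploit the $\mathrm{Sp}(2)$-symmetry you identified to reduce to a finite-dimensional family of invariant Hermitian metrics on $\mathbb P^3=\mathrm{Sp}(2)/\mathrm U(2)$ and then check the BTP and rank-one torsion conditions directly, but you have not done this.
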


\begin{proof}
Assume that $M^3={\mathbb P}^3$. In this case $L={\mathcal O}(2)$. It follows from \cite{Schneider} for instance that $\Omega (2)$ admits a trivial line subbundle, which corresponds to a nowhere zero holomorphic section $\xi \in V:=H^0({\mathbb P}^3,\Omega (2))$. $\xi$ determines a surjective bundle map $T_{\mathbb{P}^3} \rightarrow L$ so that the exact sequence \eqref{eq:7.7} holds. Let $\tilde{\omega}$  be the (scaled) Fubini-Study metric of ${\mathbb P}^3$ with Ricci curvature $2$. It has constant holomorphic sectional curvature $1$. Let $[Z_0\!:\!Z_1\!:\!Z_2\!:\!Z_3]$ be the standard unitary homogeneous coordinate of ${\mathbb P}^3$. In the coordinate neighborhood $U_0=\{ Z_0\neq 0\}$, let $z_i=\frac{Z_i}{Z_0}$, $1\leq i\leq 3$ and it follows that
$$ \sqrt{-1}\, \Theta^L=\tilde{\omega} = \frac{1}{2}\mbox{Ric}(\tilde{\omega}) = 2\sqrt{-1}\,\partial \overline{\partial} \log (1+|z|^2), \ \ \ \ \ \ \parallel \!Z_0^2\! \parallel^2 = \frac{1}{(1+|z|^2)^2} $$
where $Z_0^2$ is a local frame of $L$ in $U_0$ and $|z|^2= |z_1|^2 + |z_2|^2 + |z_3|^2$. Under the coordinate $z$,
$$ \tilde{g}_{i\overline{j}} = \frac{2}{1+|z|^2}\delta_{ij} - \frac{2}{(1+|z|^2)^2}\overline{z}_iz_j, \ \ \ \ \ \ \tilde{g}^{\overline{i}j} = \frac{1}{2}(1+|z|^2) \big( \delta_{ij} + \overline{z}_iz_j\big)  .$$

It is well-known that $V \cong {\mathbb C}^6 $ has a basis $\{ \lambda_{ij} \}_{0\leq i<j\leq 3}$, where
$ \lambda_{ij} = Z_i dZ_j - Z_j dZ_i$. As $\xi \in V$ is a nowhere zero section,
it follows that $\xi = \sum a_{ij}\lambda_{ij}$ for some constants $a_{ij}$. In $U_0$,
$$  \lambda_{0i}=Z_0^2dz_i, \ \ \ \ \lambda_{ij}=Z_0^2(z_idz_j-z_jdz_i), \ \ \ \ \ 1\leq i,j\leq 3. $$
Thus $ \xi = Z_0^2 (\ell_{1}dz_1 + \ell_2dz_2 + \ell_3dz_3)$, where
\begin{eqnarray*}
\ell_1 & = & a_1 - a_{12}z_2-a_{13}z_3 \\
\ell_2 & = & a_2 + a_{12}z_1-a_{23}z_3 \\
\ell_3 & = & a_3 + a_{13}z_1 + a_{23}z_2
\end{eqnarray*}
It yields that
\begin{eqnarray*}
\parallel\! \xi\! \parallel^2 & = & \parallel \!Z_0^2\! \parallel^2 \sum_{i,j=1}^3 \ell_i \overline{\ell_j} \, \tilde{g}^{\overline{i}j} \\
& = & \parallel \!Z_0^2\! \parallel^2  \frac{1}{2}(1+|z|^2) \big( \sum_i |\ell_i|^2 + |\sum_k \ell_k\overline{z}_k |^2 \big) \\
& = & \frac{1}{2(1+|z|^2)}  \big(  |\ell_1|^2 + |\ell_2|^2 + |\ell_3|^2 + |\ell_1 \overline{z}_1 + \ell_2 \overline{z}_2 + \ell_3 \overline{z}_3 |^2 \big)
\end{eqnarray*}
Now assume that $\|\xi\|^2$ above is a constant, which means
\begin{equation} \label{eq:xisquareconstant}
  |\ell_1|^2 + |\ell_2|^2 + |\ell_3|^2 + |\ell_1 \overline{z}_1 + \ell_2 \overline{z}_2 + \ell_3 \overline{z}_3 |^2 = c\,(1 + |z_1|^2 + |z_2|^2 + |z_3|^2),
  \end{equation}
where $c>0$ is a constant. We want to derive at a contradiction. For each $1\leq i\leq 3$, write $\ell_i=\ell_i^{(0)} + \ell_i^{(1)}$ where $\ell_i^{(0)},\ \ell_i^{(1)}$ are respectively  the degree $0$ and degree $1$ part of $\ell_i$. By looking at the degree $4$ part of the left hand side of (\ref{eq:xisquareconstant}), we get
\[ |\ell_1^{(1)} \overline{z}_1 + \ell_2^{(1)} \overline{z}_2 + \ell_3^{(1)} \overline{z}_3 |^2 =0,\]
which means $ a_{12}=a_{13}=a_{23}=0$, and (\ref{eq:xisquareconstant}) now becomes
\[ |a_{1}|^2 + |a_{2}|^2 + |a_{3}|^2 + |a_{1} \overline{z}_1 + a_{2} \overline{z}_2 + a_{3} \overline{z}_3 |^2 = c\,(1 + |z_1|^2 + |z_2|^2 + |z_3|^2). \]
From this we obtain
\[\begin{bmatrix} \overline{a}_1a_{1} & \overline{a}_1a_{2} & \overline{a}_1a_{3} \\
\overline{a}_2a_{1} & \overline{a}_2a_{2} & \overline{a}_2a_{3} \\
\overline{a}_3a_{1} & \overline{a}_{3}a_{2} & \overline{a}_{3}a_{3}\end{bmatrix}
= c\begin{bmatrix} 1 & & \\ & 1 & \\ & & 1\end{bmatrix}\!\!,\]
which is a contradiction as the two sides have different ranks. This shows that $\|\xi\|^2$ cannot be a constant, and we have completed the proof of the claim.
\end{proof}

Now we are left with the only possibility for a compact balanced BTP threefold $(M^3,g)$ with $B$ rank equal to $1$: the flag threefold $X^3 = {\mathbb P}(T_{{\mathbb P}^2} )$. Note that $X^3$ is the hypersurface in $N^4={\mathbb P}^2\times {\mathbb P}^2$ defined by $Z_0W_0+Z_1W_1+Z_2W_2=0$, where $Z$, $W$ are the standard unitary homogeneous coordinate of the two factors of $N$. For $i=1,2$, denote by $\pi_i: X^3 \rightarrow {\mathbb P}^2$ the restriction on $X$ of the projection map from $N$ onto its $i$-th factor. The Picard group $\mbox{Pic}(X)\cong {\mathbb Z}^{\oplus 2}$ is generated by $L_1$ and $L_2$, where $L_i=\pi_i^{\ast}{\mathcal O}_{{\mathbb P}^2}(1)$, and the anti-canonical line bundle of $X$ is $-K_{\!X} = 2L=2(L_1+L_2)$. Here for convenience we used the additive notation for line bundles. The K\"ahler-Einstein metric $\tilde{\omega}$ on $X$ is the restriction of the product of Fubini-Study metric, and we have
\begin{equation} \label{eq:7.13}
 \sqrt{-1}\, \Theta^L = \tilde{\omega} = \frac{1}{2}\mbox{Ric}(\tilde{\omega}) = \omega_0|_X, \ \ \ \omega_0 = \sqrt{-1} \, \partial \overline{\partial} \log |Z|^2 + \sqrt{-1} \, \partial \overline{\partial} \log |W|^2  .
 \end{equation}

\begin{claim} \label{eq:7.14}
Let $X^3$ be the flag threefold ${\mathbb P}(T_{{\mathbb P}^2} )$, $\Omega$ its holomorphic cotangent bundle, and $L$ be the line bundle  $\pi_1^{\ast}{\mathcal O}_{{\mathbb P}^2}(1) \otimes \pi_2^{\ast}{\mathcal O}_{{\mathbb P}^2}(1)$. Then we have
\[H^0(X, \Omega \otimes L) = {\mathbb C} \xi, \ \ \ \ \ \ \ \ \xi = \sum_{i=0}^2 W_idZ_i = - \sum_{i=0}^2 Z_idW_i.\]
\end{claim}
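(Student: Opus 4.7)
\emph{Proof proposal.} The plan is to compute $H^0(X, \Omega \otimes L)$ by combining the conormal sequence of $X \subset N := \mathbb{P}^2 \times \mathbb{P}^2$ with the K\"unneth formula and the Euler sequence on $\mathbb{P}^2$. Since $X$ is the zero locus of the bidegree $(1,1)$ section $F := \sum_i Z_iW_i$ of $\mathcal{O}_N(1,1)$, the conormal bundle of $X$ in $N$ is isomorphic to $L^{-1}$, so twisting the conormal sequence by $L$ gives
\begin{equation*}
0 \longrightarrow \mathcal{O}_X \longrightarrow \Omega_N|_X \otimes L \longrightarrow \Omega \otimes L \longrightarrow 0,
\end{equation*}
where the first map sends $1 \mapsto dF|_X = \sum_i(W_i dZ_i + Z_i dW_i)|_X$. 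Kodaira vanishing for the Fano threefold $X$ yields $H^1(X, \mathcal{O}_X) = 0$, so the long exact sequence reduces to $0 \to \mathbb{C} \to H^0(X, \Omega_N|_X \otimes L) \to H^0(X, \Omega \otimes L) \to 0$.

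To evaluate the middle term, I would apply the ideal sheaf sequence of $X \subset N$ tensored with $\Omega_N \otimes \mathcal{O}_N(1,1)$:
\begin{equation*}
0 \longrightarrow \Omega_N \longrightarrow \Omega_N \otimes \mathcal{O}_N(1,1) \longrightarrow \Omega_N|_X \otimes L \longrightarrow 0.
\end{equation*}
Decomposing $\Omega_N = p_1^*\Omega_{\mathbb{P}^2} \oplus p_2^*\Omega_{\mathbb{P}^2}$ and applying K\"unneth reduces $H^{\bullet}(N, \Omega_N \otimes \mathcal{O}_N(1,1))$ to $H^{\bullet}(\mathbb{P}^2, \Omega_{\mathbb{P}^2}(1))$ and $H^{\bullet}(\mathbb{P}^2, \mathcal{O}(1))$. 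The Euler sequence $0 \to \Omega_{\mathbb{P}^2}(1) \to \mathcal{O}^{\oplus 3} \to \mathcal{O}(1) \to 0$ immediately gives $H^i(\mathbb{P}^2, \Omega_{\mathbb{P}^2}(1)) = 0$ for $i = 0, 1$, and hence $H^i(N, \Omega_N \otimes \mathcal{O}_N(1,1)) = 0$ for $i = 0, 1$. Since $H^0(N, \Omega_N) = 0$ and $H^1(N, \Omega_N) \cong H^{1,1}(N) \cong \mathbb{C}^2$, the long exact sequence collapses to $H^0(X, \Omega_N|_X \otimes L) \cong \mathbb{C}^2$; together with the previous step this yields $h^0(X, \Omega \otimes L) = 1$.

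It remains to identify the generator. The two sections $\alpha := \sum_i W_i dZ_i$ and $\beta := \sum_i Z_i dW_i$ of $\Omega_N|_X \otimes L$ arise, respectively, from the pullbacks of the Euler sequences on the two $\mathbb{P}^2$ factors combined with the relation $F = 0$ on $X$, and together they span the $\mathbb{C}^2$ above. Their sum $\alpha + \beta = dF|_X$ is the image of $1 \in H^0(X, \mathcal{O}_X)$, hence vanishes in $\Omega \otimes L$; this yields the identity $\alpha = -\beta$ in $\Omega \otimes L$. A direct check in an affine chart (for instance $Z_0 = W_2 = 1$ with local coordinates $Z_1, W_0, W_1$, so that $Z_2 = -W_0 - Z_1 W_1$ and $\xi$ becomes $-dW_0 - Z_1 dW_1$) shows $\xi$ descends to a nonzero section of $\Omega \otimes L$, and therefore spans the one-dimensional space. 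The main obstacle is bookkeeping with the various line bundle twists across the two long exact sequences; once the vanishing of $H^i(\mathbb{P}^2, \Omega_{\mathbb{P}^2}(1))$ for $i \leq 1$ is extracted from the Euler sequence, the rest is diagram chasing.
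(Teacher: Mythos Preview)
Your argument is correct and takes a genuinely different route from the paper. The paper exploits the $\mathbb{P}^1$-bundle structure $\pi_1:X\to\mathbb{P}^2$: it uses the dual of the relative tangent sequence (twisted by $L$) together with the relative Euler sequence of $\mathbb{P}(T_{\mathbb{P}^2})$, and then kills the two extraneous line bundles $L'=2L_1-L_2$ and $T_{X|\mathbb{P}^2}=2L_2-L_1$ by an intersection-number argument. You instead use the realization of $X$ as a $(1,1)$-divisor in $N=\mathbb{P}^2\times\mathbb{P}^2$: the conormal sequence and the ideal-sheaf sequence, reduced via K\"unneth and the absolute Euler sequence on $\mathbb{P}^2$, give $H^0(\Omega_N|_X\otimes L)\cong H^1(N,\Omega_N)\cong\mathbb{C}^2$ and then $h^0(\Omega\otimes L)=1$.

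Your approach is more symmetric in the two $\mathbb{P}^2$ factors and avoids any intersection theory, relying only on the standard vanishing $H^i(\mathbb{P}^2,\Omega_{\mathbb{P}^2}(1))=0$. It also makes the identity $\sum W_i\,dZ_i=-\sum Z_i\,dW_i$ completely transparent: the two expressions are the images in $\Omega\otimes L$ of sections living in the two summands of $\Omega_N|_X\otimes L$, and they differ by the image $dF|_X$ of $1\in H^0(\mathcal{O}_X)$ under the conormal map. The paper's route, by contrast, locates the generator directly as the copy of $\mathcal{O}_X$ sitting inside $\pi_1^*\Omega_{\mathbb{P}^2}\otimes L$ via the relative Euler sequence, which is slightly quicker but less illuminating about the $Z\leftrightarrow W$ symmetry.
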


\begin{proof}
It is clear that $\xi$ defined in the claim is a global holomorphic section of $\Omega \otimes L$, and is nowhere zero, hence it gives a surjective bundle map $T_X \rightarrow L$ for the short exact sequence \eqref{eq:7.7}. Here we want to show that the vector space $H^0(X, \Omega \otimes L)$ is one-dimensional, hence any section is a constant multiple of $\xi$. To see this, let us denote by $T_{X|{\mathbb P}^2}$ the relative tangent bundle of the map $\pi_1: X \rightarrow {\mathbb P}^2$, given by
$$ 0 \rightarrow T_{X|{\mathbb P}^2} \rightarrow T_X \rightarrow \pi_1^{\ast}T_{{\mathbb P}^2} \rightarrow  0 .$$
Then we have $T_{X|{\mathbb P}^2}=2L-3L_1=2L_2-L_1$. Taking the dual of the above short exact sequence and tensoring it with $L$, we get
\begin{equation} \label{eq:7.15}
0 \rightarrow  \pi_1^{\ast}\Omega_{{\mathbb P}^2} \!\otimes \!L \rightarrow \Omega_X\! \otimes\! L \rightarrow L' \rightarrow  0 , \ \ \ \ \ \  \ \
L' = -T_{X|{\mathbb P}^2} + L = 2L_1-L_2.
\end{equation}
On the other hand, $X={\mathbb P}(T_{{\mathbb P}^2})$, so the relative Euler sequence is
\begin{equation} \label{eq:7.16}
0 \rightarrow  {\mathcal O}_X \rightarrow \pi_1^{\ast}\Omega_{{\mathbb P}^2} \!\otimes \!L \rightarrow  T_{X|{\mathbb P}^2} \rightarrow  0.
\end{equation}
Since $L^3_1=0$, $L_1^2L_2=1$, we have $L_1^2 L'=-1$, so $H^0(X,L')=0$ as $L^2_1$ is represented by the fibers of $\pi_1$ which will have non-negative intersection with any effective divisor in $X$. Similarly, $H^0(X, T_{X|{\mathbb P}^2})=0$. So by (\ref{eq:7.15}) and (\ref{eq:7.16}), we get
$$  H^0(X, \Omega_X\! \otimes \!L ) \cong H^0(X, \pi_1^{\ast}\Omega_{{\mathbb P}^2} \!\otimes \!L) \cong H^0(X,  {\mathcal O}_X) \cong {\mathbb C}. $$
This establishes Claim \ref{eq:7.14}.
\end{proof}

Consider the global $(1,1)$-form on $X$ defined by
\begin{equation} \label{eq:7.17}
\sigma = \sqrt{-1}\,\frac{\,^tWdZ \wedge \overline{^tWdZ} }{|Z|^2 \,|W|^2}
\end{equation}
where $Z$, $W$ are unitary homogeneous coordinate on the two factors of $N={\mathbb P}^2\times {\mathbb P}^2$, viewed as  column vectors. It is not hard to see that the norm $\parallel\! \!\sigma \!\!\parallel =\frac{1}{2}$ with respect to the K\"ahler-Einstein metric $\tilde{\omega}$ of $X$. Consider the Hermitian metric $g$ on $X$ with K\"ahler form $\omega = \tilde{\omega }- \sigma$, which is clearly a homogeneous Hermitian metric on $X$. We will call the Hermitian manifold $(X,g)$ the {\em Wallach threefold} from now on, to honor the influential work \cite{Wallach} in geometry.

We will verify in the next section that $(X,g)$ is indeed balanced and BTP.  We will also show that its Chern connection has non-negative bisectional curvature and positive holomorphic sectional curvature, and all three Ricci tensors of the Chern connection are positive. The sectional curvature of the Levi-Civita connection of $g$ is non-negative, and the Levi-Civita connection has constant Ricci curvature $3$, thus $g$ lies in the boundary of the set of metrics with positive sectional curvature discovered by Wallach in \cite{Wallach}.

Note that homogeneous metrics on $X$ with positive sectional curvature, which are all Hermitian as observed by Wallach in \cite{Wallach}, form a moduli which depends on three real parameters. After scaling, these metrics form a peculiar planer region (see for example Figure 1 in \cite{BM}). It is not clear which metric in the set is the `best' amongst its peers.

\vspace{0.4cm}

\section{The Wallach threefold}\label{WCH3D}
Let $\omega_0$ be the product of Fubini-Study metric on $N^4={\mathbb P}^2\times {\mathbb P}^2$, given by (\ref{eq:7.13}), where $Z$ and $W$ are unitary homogeneous coordinates, and the flag threefold $X$ is defined by the smooth ample divisor $\{ \,^t\!ZW=0\}$ in $N$. Here and below we will consider $Z$ and $W$ as column vectors. The restriction $\tilde{\omega }= \omega_0|_X$ is the K\"ahler-Einstein metric on $X$ with $\mbox{Ric}(\tilde{\omega})=2\tilde{\omega}$, and our Hermitian metric $g$, which will be called the {\em Wallach metric} from now on, is defined by $\omega = \tilde{\omega} - \sigma $, where the global $(1,1)$-form $\sigma$ on $X$ is given in (\ref{eq:7.17}). We will verify that $g$ is balanced and BTP, and compute its Chern and Riemannian curvature.

Fix any point $p\in X=SU(3)/T^2$. Note that for any $A\in SU(3)$, the map $([Z], [W]) \mapsto ([AZ],[\overline{A}W])$ is an isometry on $(X,g)$. So without loss of generality, we may assume that $p =([1\!\!:\!0\!:\!0], [0\!:\!0\!:\!\!1])$. Consider the neighborhood $U_{02}=\{ Z_0\neq 0\} \times \{ W_2\neq 0\}$ in $N$, with local holomorphic coordinate $(z_1, z_2, w_0, w_1)$ where $z_i=\frac{Z_i}{Z_0}$, $i=1,2$, and $w_j = \frac{W_j}{W_2}$, $j=0,1$. Within $U_{02}$, the hypersurface $X$ is defined by
\begin{equation} \label{eq:8.1}
w_0=-z_2-z_1w_1,
\end{equation}
so $(z_1, z_2, w_1)$ becomes a local holomorphic coordinate in $U=X\cap U_{02}$. Let us write $|z|^2=|z_1|^2+|z_2|^2$ and $|w|^2 = |w_0|^2 + |w_1|^2$ as usual, then in $U_{02}$ we have
\begin{equation} \label{eq:8.2}
\frac{1}{\sqrt{-1}}\omega_0 = \sum_{i,j=1}^2 \frac{(1+|z|^2)\delta_{ij} - \overline{z}_iz_j } {(1+|z|^2)^2} dz_i\wedge d\overline{z}_j + \sum_{i,j=0}^1 \frac{(1+|w|^2)\delta_{ij} - \overline{w}_iw_j } {(1+|w|^2)^2} dw_i\wedge d\overline{w}_j,
\end{equation}
and in $U$, $\tilde{\omega}$ is just the restriction of $\omega_0$ on $X$ via the equation (\ref{eq:8.1}). For convenience, let us write $w_1=z_3$, and define
\begin{equation} \label{eq:8.3}
 \alpha = 1+ |z_1|^2 + |z_2|^2, \ \ \  \beta = 1+|z_3|^2 + |f|^2, \ \ \ f=z_2+z_1z_3.
\end{equation}
In $U$, $(z_1,z_2,z_3)$ gives a local holomorphic coordinate for $X$, and $p$ corresponds to the origin $(0,0,0)$. By (\ref{eq:8.2}) the metric $\tilde{g}$ has components
\begin{equation} \label{eq:8.4}
 \tilde{g}_{i\overline{j}} = \frac{\alpha_{i\overline{j}}}{\alpha} - \frac{\alpha_i \alpha_{\overline{j}} }{\alpha^2} + \frac{\beta_{i\overline{j}}}{\beta} - \frac{\beta_i \beta_{\overline{j}} }{\beta^2}, \ \
 \ \ \ \ 1\leq i,j\leq 3,
\end{equation}
where subscripts stand for partial derivatives in $z_i$ or $\overline{z}_j$.  Taking partial derivative in $z_k$, we obtain
\begin{equation} \label{eq:8.5}
 \tilde{g}_{i\overline{j},k} = - \frac{1}{\alpha^2} \big( \alpha_k \alpha_{i\overline{j}}  + \alpha_i \alpha_{k\overline{j}} \big) + \frac{2\alpha_i \alpha_k \alpha_{\overline{j}} }{\alpha^3} + \frac{\beta_{ik\overline{j}}}{\beta} - \frac{1}{\beta^2} \big( \beta_k \beta_{i\overline{j}} + \beta_i \beta_{k\overline{j}} + \beta_{\overline{j}} \beta_{ik} \big)  + \frac{2\beta_i\beta_k \beta_{\overline{j}} }{\beta^3}. \ \ \
\end{equation}
Here we used the fact that $\alpha_{ik}=0$ and $\alpha_{ik\overline{j}} =0$. At the origin, $\alpha (0)=\beta (0)=1$, $\alpha_i(0)=\beta_i(0)=0$, $\beta_{ik}(0)=0$, and $\alpha_{i\overline{j}}(0)= \delta_{i1}\delta_{j1}+ \delta_{i2}\delta_{j2}$, $\beta_{i\overline{j}}(0)=  \delta_{i2}\delta_{j2}+ \delta_{i3}\delta_{j3}$, so we have
\begin{eqnarray}
&&  \tilde{g}_{i\overline{j},k}(0) \ = \beta_{ik\overline{j}}(0) =f_{ik}\overline{f_j}(0)= (\delta_{i1}\delta_{k3}+\delta_{i3}\delta_{k1})\delta_{j2} , \label{eq:8.6} \\
 && \tilde{g}_{i\overline{j},kp}(0) = 0, \label{eq:8.7} \\
 && \tilde{g}_{i\overline{j},k\overline{\ell}}(0) = - \big(\alpha_{i\overline{j}} \alpha_{k\overline{\ell}} + \alpha_{i\overline{\ell}} \alpha_{k\overline{j}} \big)  + f_{ik}\overline{ f_{j\ell} } - \big(\beta_{i\overline{j}} \beta_{k\overline{\ell}} + \beta_{i\overline{\ell}} \beta_{k\overline{j}} \big). \label{eq:8.8}
\end{eqnarray}
From this, we see that
\begin{equation} \label{eq:8.9}
\left\{ \begin{array}{llll}  \tilde{g}_{i\overline{j},k\overline{\ell}}(0)  & = & 0 , \ \ \ \ \mbox{if} \ \{ i,k\} \neq \{ j,\ell\},  \\ \tilde{g}_{i\overline{i},i\overline{i}}(0) &  =  &  -2 (\alpha_{i\overline{i}} + \beta_{i\overline{i}} )  \ = \ - 2\,\big( 1+ \delta_{i2}\big), \\
 \tilde{g}_{i\overline{i},k\overline{k}}(0)  & = &  \tilde{g}_{i\overline{k},k\overline{i}}(0)  \ = \ -(\alpha_{i\overline{i}}\alpha_{k\overline{k}} + \beta_{i\overline{i}}\beta_{k\overline{k}}) + |f_{ik}|^2 \ = \  \left\{ \begin{array}{ll} -1, \ \mbox{if} \ \{ i,k\} = \{ 1,2\} \ \mbox{or} \ \{ 2,3\} , \\ \ \,1, \ \ \mbox{if} \ \{ i,k\} = \{ 1,3\} .\end{array} \right.
 \end{array}  \right.
\end{equation}
Similarly, since in $U$ the $(1,1)$-form $\sigma$ is given by
$$ \sigma = \sqrt{-1}\sum_{i,j=1}^3 \sigma_{i\overline{j}} dz_i \wedge d\overline{z}_j = \frac{\sqrt{-1} }{\alpha \beta } (z_3dz_1+dz_2) \wedge (\overline{z}_3 d\overline{z}_1 + d\overline{z}_2),
$$
we therefore  have
 \begin{equation*}
 \sigma_{i\overline{j}} = \frac{1}{\alpha \beta} ( \delta_{i1}\delta_{j1} |z_3|^2 + \delta_{i1}\delta_{j2} z_3 + \delta_{i2}\delta_{j1}\overline{z}_3 + \delta_{i2}\delta_{j2}).
\end{equation*}
From this we compute
\begin{equation} \label{eq:8.10}
\left\{ \begin{array}{llll} \sigma_{i\overline{j}}(0) & = & \delta_{i2}\delta_{j2}, \\
\sigma_{i\overline{j},k}(0) & = & \delta_{i1}\delta_{j2}\delta_{k3}, \\
 \sigma_{i\overline{j},kp}(0) & = & 0,\\
 \sigma_{i\overline{j},k\overline{\ell}}(0)  & = & \delta_{ij}\delta_{i1} \delta_{k\ell}\delta_{k3} - \delta_{k\ell} (1+\delta_{k2})\delta_{ij} \delta_{i2}.
\end{array}  \right.
\end{equation}
By (\ref{eq:8.4}), we have $\tilde{g}_{i\overline{j}}(0)=\delta_{ij}(1+\delta_{i2})$, so at the origin $g_{i\overline{j}}=\tilde{g}_{i\overline{j}} - \sigma_{i\overline{j}}$ satisfies
\begin{equation} \label{eq:8.11}
 g_{i\overline{j}}(0) = \delta_{ij}, \ \ \
g_{i\overline{j},k}(0) = \delta_{i3}\delta_{j2}\delta_{k1}, \ \ \
 g_{i\overline{j},kp}(0) =0,
\end{equation}
\begin{equation} \label{eq:8.12}
g_{i\overline{j},k\overline{\ell}}(0)  \,= \,0  \  \ \mbox{if} \ \{ i,k\} \neq \{ j,\ell\}, \ \ \ \ \ \ \  g_{i\overline{i},i\overline{i}}(0) \,  = \, -2,
\end{equation}
\begin{eqnarray}
\label{eq:8.13} && g_{i\overline{k},k\overline{i}}(0)  \ = \  \left\{ \begin{array}{ll} -1, \ \mbox{if} \ \{ i,k\} = \{ 1,2\} \ \mbox{or} \ \{ 2,3\} , \\ \ \,1, \ \ \mbox{if} \ \{ i,k\} = \{ 1,3\} .\end{array} \right. \\
\label{eq:8.14} && g_{i\overline{i},k\overline{k}}(0)  \ = \ \left\{ \begin{array}{lll} -1 + \delta_{i2},\ \ \ \ \ \mbox{if} \ \{ i,k\} = \{ 1,2\} \ \mbox{or} \ \{ 2,3\} , \\ \ \,1-\delta_{i1}\delta_{k3}, \ \ \mbox{if} \  \{ i,k\} = \{ 1,3\} . \\ \end{array} \right.
\end{eqnarray}
The curvature components of the Chern connection $\nabla^c$, defined as
\[ R^c_{k\overline{\ell}i\overline{j}} = \sum_r \Theta_{ir}(\frac{\partial}{\partial z_k},\frac{\partial}{\partial \overline{z}_\ell})g_{r\bar{j}},
\quad \Theta = \overline{\partial} \theta =\overline{\partial} (\partial g g^{-1}),\]
where $g=(g_{i\overline{j}})$, is given by
$$ R^c_{k\overline{\ell}i\overline{j}} = - g_{i\overline{j},k\overline{\ell}} + \sum_{p,q} g_{i\overline{p},k} \overline{ g_{j\overline{q},\ell}  } g^{\overline{p}q} .$$
At the origin, $g_{i\overline{j}} (0)=\delta_{ij}$, and $g_{i\overline{j},k} (0)=0$ except $g_{3\overline{2},1} (0)=1$, so the second term on the right hand side of the above equality is $\delta_{ij}\delta_{i3}\delta_{k\ell}\delta_{k1}$, and by (\ref{eq:8.12}), (\ref{eq:8.13}) and (\ref{eq:8.14}) we get at the origin that
\begin{eqnarray}
\label{eq:8.15} && R^c_{i\overline{j}k\overline{\ell}} = 0 , \ \mbox{if} \ \{i,k\} \neq \{ j,\ell\}, \\
\label{eq:8.16} && R^c_{i\overline{i}i\overline{i}} = 2, \\
\label{eq:8.17} && R^c_{1\overline{2}2\overline{1}} = R^c_{2\overline{1}1\overline{2}}
= R^c_{3\overline{2}2\overline{3}} = R^c_{2\overline{3}3\overline{2}} =1,
\ \ R^c_{1\overline{3}3\overline{1}} = R^c_{3\overline{1}1\overline{3}} =-1,\\
\label{eq:4.18} && R^c_{1\overline{1}2\overline{2}}  = R^c_{1\overline{1}3\overline{3}} =R^c_{3\overline{3}1\overline{1}}= R^c_{3\overline{3}2\overline{2}} =0, \ \ R^c_{2\overline{2}1\overline{1}} = R^c_{2\overline{2}3\overline{3}} =1.
\end{eqnarray}
In other words, at the origin, the Chern curvature matrix is
\begin{equation}
\Theta = \left[ \begin{array}{ccc}   2dz_{1\overline{1}}+dz_{2\overline{2}} & dz_{2\overline{1}} &  -dz_{3\overline{1}}  \\ dz_{1\overline{2}}  & 2 dz_{2\overline{2}}  & dz_{3\overline{2}}  \\
- dz_{1\overline{3}}  & dz_{2\overline{3}}  & dz_{2\overline{2}}+ 2dz_{3\overline{3}}  \end{array} \right]
\end{equation}
Here we write $dz_{i\overline{j}}$ for $dz_i\wedge d\overline{z}_j$. In particular, $\sqrt{-1} \,\mathrm{tr} \, \Theta = 2\sqrt{-1}(dz_{1\overline{1}}+2dz_{2\overline{2}}+dz_{3\overline{3}}) = 2\tilde{\omega}$ as expected. The bisectional (Griffiths) curvature of $g$ is $R^c_{X\overline{X}Y\overline{Y}} = \,^tY \Theta (X, \overline{X}) \overline{Y}$, which is equal  to
\begin{eqnarray*}
&&  2\sum_{1 \leq i \leq 3} |X_iY_i|^2 + |X_2Y_1|^2+|X_2Y_3|^2 + 2\,\mbox{Re} (X_1\overline{X}_2\overline{Y}_1Y_2)  + \ 2\,\mbox{Re} (X_2\overline{X}_3\overline{Y}_2Y_3) - 2\,\mbox{Re} (X_1\overline{X}_3\overline{Y}_1Y_3) \\
& = & |X_2Y_1|^2+|X_2Y_3|^2 + |X_1\overline{Y}_1 + X_2\overline{Y}_2|^2 + |X_1\overline{Y}_1 - X_3\overline{Y}_3|^2 + |X_2\overline{Y}_2 + X_3\overline{Y}_3|^2 \ \geq \ 0.
\end{eqnarray*}
This indicates that the metric $g$ is non-K\"ahler since $X^3$ is not a Hermitian symmetric space. Note that when $X=Y$, the (Chern) holomorphic sectional curvature of $g$ is given by
\begin{equation*}
 R^c_{X\overline{X}X\overline{X}} =  |X_1X_2|^2 + |X_2X_3|^2 + (|X_1|^2  + |X_2|^2)^2 + (|X_1|^2 - |X_3|^2)^2 + (|X_2|^2 + |X_3|^2)^2,
\end{equation*}
which is positive for any $X\neq 0$, so $(X^3,g)$ has positive Chern holomorphic sectional curvature. Also, the first, second and third Chern Ricci form of $g$ are respectively
\begin{equation}\label{C_Ricci_Wallach}
Ric(\omega) = 2\tilde{\omega}, \ \ \  Ric^{(2)}(\omega) = 4\omega - \tilde{\omega}, \ \ \ Ric^{(3)} (\omega) = 2\tilde{\omega }.
\end{equation}
They  are all positive definite, with the first and third Ricci equal to each other.

\vspace{0.1cm}

Next let us verify that $(X^3,g)$ is balanced and BTP. First let us recall the formula for Bismut connection and curvature under natural frames. Let $(z_1, \ldots , z_n)$ be a local holomorphic coordinate on a Hermitian manifold $(M^n,g)$, and write $\varepsilon_i=\frac{\partial}{\partial z_i}$. Under the frame $\varepsilon$, which we view as a column vector, the Levi-Civita connection $\nabla$, Chern connection $\nabla^c$, and Bismut connection $\nabla^b$ are given by
$$ \nabla^c \varepsilon = \theta \varepsilon, \ \ \ \nabla^b \varepsilon = \theta^b \varepsilon, \ \ \ \nabla \varepsilon = \theta^{(1)} \varepsilon + \overline{\theta^{(2)}} \,\overline{ \varepsilon}. $$
Then it is easy to see that $\theta =\partial g g^{-1}$, where $g=(g_{i\overline{j}})$. Denote by $T$ the torsion tensor of $\nabla^c$, and write
$T(\varepsilon_i , \varepsilon_k) = \sum_j T^j_{ik} \varepsilon_j$, then we have
\begin{equation} \label{eq:8.20}
 T^j_{ik} = \sum_{\ell} (g_{k\overline{\ell},i } - g_{i\overline{\ell},k }) g^{\overline{\ell}j} .
 \end{equation}
Since $\nabla$ is torsion free, it holds that
$$ 2\langle \nabla_xy,z\rangle = x\langle y,z\rangle + y\langle x,z\rangle -z \langle x,y\rangle +\langle [x,y],z\rangle - \langle [y,z],x\rangle - \langle [x,z],y\rangle $$
for any vector fields $x$, $y$, $z$ on $M^n$, so under the frame $\varepsilon$ we have
\begin{equation} \label{eq:8.21}
 \theta^{(1)}_{ij} = \frac{1}{2}\sum_{k,\ell} \big( g_{k\overline{\ell},i } + g_{i\overline{\ell},k }\big)g^{\overline{\ell}j}  dz_k +  \frac{1}{2}\sum_{k,\ell} \big( g_{i\overline{\ell},\overline{k} } - g_{i\overline{k},\overline{\ell} }\big) g^{\overline{\ell}j} d\overline{z}_k.
 \end{equation}
By the relation $\theta^b=2\theta^{(1)} -\theta$, we get
\begin{equation} \label{eq:8.22}
 \theta^b_{ij} = \sum_{k,\ell} g_{k\overline{\ell},i } g^{\overline{\ell}j}  dz_k +  \sum_{r,k,\ell} g_{i\overline{r}}\overline{T^r_{k\ell} }  g^{\overline{\ell}j} d\overline{z}_k.
 \end{equation}
The BTP condition is given by
\begin{equation} \label{eq:8.23}
\nabla^bT=0 \ \Longleftrightarrow \  dT^j_{ik} =\sum_r \big( \theta^b_{ir} T^j_{rk} + \theta^b_{kr} T^j_{ir} - \theta^b_{rj} T^r_{ik} \big), \ 
 \end{equation}
When $g_{i\overline{j}}=\delta_{ij}$ at the origin $0$, then by (\ref{eq:8.22}) the BTP condition at $0$  is given by \begin{eqnarray}
\frac{\partial}{\partial z_{\ell}} T^j_{ik} & = & \sum_r \big(   g_{\ell \overline{r},i} T^j_{rk} +   g_{\ell \overline{r},k} T^j_{ir} -  g_{\ell \overline{j},r} T^r_{ik}   \big) ,\label{eq:8.24} \\
 \frac{\partial}{\partial \overline{z}_{\ell}} T^j_{ik} & = &  \sum_r \big(   T^j_{ir} \overline{ T^k_{\ell r}} -   T^j_{kr} \overline{ T^i_{\ell r}} +  T^r_{ik} \overline{ T^r_{j\ell }}   \big)  .\label{eq:8.25}
\end{eqnarray}
Now let us check the BTP condition for our Wallach threefold $(X^3,g)$. At the origin, we have $g_{i\overline{j}}=\delta_{ij}$, and $g_{i\overline{j},k} =0$ except $g_{3\overline{2},1} =1$, so by (\ref{eq:8.20}) we know that all components of $T$ vanish except $T^2_{13}=1$. In particular, Gauduchon's torsion $1$-form $\eta=0$ as $\eta_k = \sum_i T^i_{ik}$, so $g$ is balanced.

For (\ref{eq:8.24}), the right hand side is zero because, for each of these three terms, one of the two factors is zero when $r$ is $2$ or not $2$. Its left hand side at $0$ is equal to $(g_{k\overline{j},i\ell} - g_{i\overline{j},k\ell})$, which is zero by the last equality in (\ref{eq:8.11}). For (\ref{eq:8.25}), the left hand side  at the origin is given by
$$  (g_{k\overline{j},i\overline{\ell}} - g_{i\overline{j},k\overline{\ell}}) - (g_{k\overline{2},i} - g_{i\overline{2},k} ) \, \overline{g_{j\overline{2},\ell} } .$$
When $\{i,k\}\neq \{j,\ell\}$, both sides of (\ref{eq:8.25}) are zero. The same is true when $i=j=k=\ell$, so we just need to check the $i\neq k$ and $\{ i,k\} =\{ j,\ell\}$ case. Assume that $i=j\neq k=\ell$. Then the left hand side of (\ref{eq:8.25}) at the origin is equal to
\[(g_{k\overline{j},i\overline{\ell}} - g_{i\overline{j},k\overline{\ell}}) - (g_{k\overline{2},i} - g_{i\overline{2},k} ) \, \overline{g_{j\overline{2},\ell} } =
\begin{cases}
-\delta_{i2},\ \text{if}\ \{i,k\}=\{1,2\}\ \text{or}\ \{2,3\},\\
1,\quad \ \ \text{if}\ \{i,k\}=\{1,3\},
\end{cases}\]
by the equalities \eqref{eq:8.13} and \eqref{eq:8.14}. In the mean time, the right hand side of (\ref{eq:8.25}) is equal to
$$  \sum_r ( T^i_{ir} \overline{T^k_{kr}} - |T^i_{kr}|^2 + |T^r_{ik}|^2) =
\begin{cases} -\delta_{i2},\ \text{if}\ \{i,k\}=\{1,2\}\ \text{or}\ \{2,3\},\\
1,\quad \ \ \text{if}\ \{i,k\}=\{1,3\}.\end{cases}$$
So (\ref{eq:8.25}) holds in this case. The $i=\ell \neq k=j$ case can be verified similarly. So the Wallach threefold $(X^3,g)$ is indeed non-K\"ahler, balanced and BTP.

In the Appendix, we will also verify that the Wallach threefold $(X^3,g)$ as a Riemannian manifold is Einstein (with constant Ricci curvature $3$) and has non-negative sectional curvature. The metric $g$ is a boundary point of the region of positively curved metrics on $X^3$ discovered by Wallach \cite{Wallach}.

\vspace{0.4cm}

\section{Balanced BTP threefolds of middle type}\label{mddtype3D}

In this section, we will discuss balanced BTP threefolds of middle type, which constitutes all balanced BTP threefolds except the Chern flat and Fano ones. Let $(M^3,g)$ be a compact, balanced BTP threefold of middle type, namely, its $B$ tensor has rank $2$. We want to analyze its geometric and topological structure.

\subsection{The local structure}\label{str}

Let $L$ be the kernel of the $B$ tensor. It is a complex line subbundle of the holomorphic tangent bundle $T_{\!M}$. First we show that $L$ is holomorphic.

\begin{lemma} \label{claim9.1}
$L$ is a holomorphic line bundle on $M^3$ satisfying $L^{\otimes 2}\cong {\mathcal O}_{\!M}$. It is actually a foliation.
\end{lemma}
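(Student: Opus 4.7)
The plan is to work in a local special frame $e$, in which the middle-type hypothesis specializes to $a_1=a_2=a>0$ and $a_3=0$, so that $L=\mathbb{C}\{e_3\}$. I will show that $e_3$ is simultaneously $\nabla^b$-parallel and locally holomorphic (giving the holomorphicity of $L$), and then compare two overlapping special frames to see that the transition functions of $L$ take values in $\{\pm1\}$ (giving $L^{\otimes 2}\cong\mathcal{O}_M$ via the global section $e_3\otimes e_3$).

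For the first step, plugging $a_3=0$ and $a_1=a_2=a$ into Lemma~\ref{lemmaB} yields $\theta^b_{33}=0$, $\theta^b_{11}=\theta^b_{22}$, and $\theta^b_{13}=\theta^b_{23}=0$; skew-Hermiticity of $\theta^b$ then forces $\theta^b_{31}=\theta^b_{32}=0$ as well, so the entire third row and column of $\theta^b$ vanish and $\nabla^b e_3=0$. From (\ref{eq:6.7})--(\ref{eq:6.8}) with $a_3=0$, one also reads off $\gamma_{31}=-\bar\psi_2=-a\varphi_2$ and $\gamma_{32}=\psi_1=a\varphi_1$, both of type $(1,0)$, together with $\gamma_{33}=0$. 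Consequently each $\theta^c_{3r}=\theta^b_{3r}-\gamma_{3r}$ has vanishing $(0,1)$-part, so $\bar\partial e_3=0$. Hence $e_3$ is a local holomorphic section of $T_M$, and $L$ is a holomorphic line subbundle.

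For the second step, compare two special frames $\tilde e=Pe$ at the same point. Because $P\in U(3)$ must preserve the $\nabla^b$-parallel eigenspace decomposition of $B$, it is block-diagonal, $P=\mathrm{diag}(P',\lambda)$ with $P'=\bigl(\begin{smallmatrix}p & q\\ r & s\end{smallmatrix}\bigr)\in U(2)$ and $\lambda\in U(1)$. Applying the transformation rule $\tilde T^\alpha_{jk}=\sum P_{jr}P_{ks}T^t_{rs}\overline{P_{\alpha t}}$ and imposing the special-frame normalizations $\tilde T^1_{23}=\tilde T^2_{31}=a$ produces
\[
\lambda(s\bar{p}-r\bar{q})=1,\qquad \lambda(p\bar{s}-q\bar{r})=1.
\]
The two left-hand sides are complex conjugates of each other, so the system forces $\lambda\in\mathbb{R}\cap U(1)=\{\pm 1\}$. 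Therefore $\tilde e_3=\pm e_3$ across overlaps of special-frame charts, so $e_3\otimes e_3$ is a globally well-defined, nowhere-vanishing, holomorphic section of $L^{\otimes 2}$, trivializing it. Finally, $L$ is a $1$-complex-dimensional holomorphic distribution with local holomorphic generator $e_3$, so Frobenius integrability is automatic ($[e_3,e_3]=0$) and $L$ integrates to a holomorphic foliation of $M^3$ by complex curves. The main obstacle lies in the transition analysis; the decisive point is that the middle-type equality $a_1=a_2$ supplies a second equation conjugate to the first, which is precisely what forces $\lambda$ to be real and hence of order two.
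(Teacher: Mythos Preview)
Your proof is correct and follows essentially the same route as the paper: special frame, Lemma~\ref{lemmaB} to make the third row/column of $\theta^b$ vanish, the $\gamma$-matrix to conclude $e_3$ is $\bar\partial$-closed, and then the transition computation for two special frames to force $\lambda=\pm1$. The only point worth tightening is the foliation clause: writing ``Frobenius integrability is automatic ($[e_3,e_3]=0$)'' establishes involutivity of the rank-one holomorphic distribution, but ``foliation by complex curves'' in the sense the paper uses requires the real rank-two distribution $L\oplus\overline{L}$ to be integrable, i.e.\ $[e_3,\overline{e}_3]\in L\oplus\overline{L}$. The paper verifies this directly from the Chern connection matrix; in your argument it is still true, but the reason is that you have already shown $e_3$ is a \emph{holomorphic} vector field, and for any holomorphic $X$ one has $[X,\overline{X}]=0$ in local holomorphic coordinates. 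Stating this would make the foliation step airtight.
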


\begin{proof}
In the statement of the lemma,  ${\mathcal O}_{\!M}$ denotes the trivial line bundle of $M$. Let $e$ be a special frame. Since the rank of the $B$ tensor is $2$, we have $a_1=a_2=a>0$ is a global constant. It follows from (\ref{eq:aithetab}) that $\theta^b_{13}=\theta_{23}^b=\theta^b_{33}=0$,  $\theta^b_{11}=\theta^b_{22}$ and $\theta^b_{12}+\theta^b_{21}=0$, which implies that
\[\gamma = a\!\begin{bmatrix} 0 & -(\varphi_3+\bar{\varphi}_3) & \overline{\varphi}_2 \\
                                      \varphi_3 + \bar{\varphi}_3 & 0 & - \bar{\varphi}_1 \\
                                       -\varphi_2 & \varphi_1 & 0 \end{bmatrix}\!\!, \ \,
\theta^b =  \begin{bmatrix} \alpha & \beta_0  & 0 \\ -\beta_0 & \alpha &  0  \\ 0 & 0 & 0 \end{bmatrix}\!\!, \ \,
\theta =  \begin{bmatrix} \alpha & \beta  &  - a\overline{\varphi}_2 \\ -\beta & \alpha &  a\overline{\varphi}_1  \\ a\varphi_2 & -a\varphi_1 & 0 \end{bmatrix}\!\! ,  \ \,
\tau = a \!\begin{bmatrix} \varphi_2\varphi_3 \\ \varphi_3\varphi_1  \\ 0  \end{bmatrix}\!\!,\]
where $\overline{\alpha}=-\alpha$, $\overline{\beta}_0=\beta_0$, and $\beta = \beta_0 +a\varphi_3 + a\overline{\varphi}_3$. This means that $\nabla^be_3=0$ for any special frame $e$. By the expression of the Chern connection matrix $\theta$, we have $\nabla^c_{\overline{i}}e_3=\sum_j \theta_{3j}(\bar{e}_i)e_j =0$, so $e_3$ is a local holomorphic vector field on $M^3$. Since $e_3$ is  a local section of $L$,  we know $L$ is a holomorphic line bundle.

To show that $L^{\otimes 2}={\mathcal O}_{\!M}$, let us assume that $\tilde{e}$ is another special frame. Since $e_3$ and $\tilde{e}_3$ are both sections of $L$ of unit length, $\tilde{e}_3=\rho e_3$ for some local smooth function $\rho$ satisfying $|\rho |=1$, while $\{ \tilde{e}_1, \tilde{e}_2\}$ is a change of $\{ e_1, e_2\}$ by a $U(2)$-valued local function $U$. From the definition of special frames, we have $\tilde{T}^1_{23}=\tilde{T}^2_{31}=a$, $T^1_{23}=T^2_{31}=a$, and all other torsion components are zero. Thus
\begin{eqnarray*}
&&  a=\tilde{T}^1_{23} = \rho \overline{U}_{11} U_{22} T^1_{23}+ \rho \overline{U}_{12} U_{21}T^2_{13} =a \rho (\overline{U}_{11}U_{22}-\overline{U}_{12}U_{21}), \\
&& a=\tilde{T}^2_{31} = \rho \overline{U}_{21}U_{12}T^1_{32}+ \rho \overline{U}_{22}U_{11}T^2_{31} =a \rho (U_{11} \overline{U}_{22}-U_{12}\overline{U}_{21}).
\end{eqnarray*}
This shows that $\rho =\overline{\rho}$, hence $\rho =\pm 1$. In other words, $\tilde{e}_3\otimes \tilde{e}_3 = e_3\otimes e_3 $ is independent of the choice of local special frames, hence can be defined  globally on $M^3$. This means that $L^{\otimes 2}={\mathcal O}_{\!M}$ is trivial.

To see that $L$ is actually a foliation, it suffices to show that $[e_3,\overline{e}_3]=0$. Since $T(e_i, \overline{e}_j)=0$, by the expression for $\theta$ we have
$$ [e_3,\overline{e}_3]=\nabla^c_{e_3}\overline{e}_3 - \nabla^c_{\overline{e}_3}e_3= \sum_k \big( \overline{\theta_{3k}(\overline{e}_3)} \,\overline{e}_k - \theta_{3k}(\overline{e}_3) \, e_3 \big) = 0.$$
This completes the proof of the lemma.
\end{proof}

In the proof of Lemma \ref{claim9.1}, we see that under any special frame $e$, the Bismut connection matrix $\theta^b$ takes a particularly simple form. It can be made diagonal after a constant unitary change of $\{ e_1, e_2\}$. Let
$ \tilde{e}_i=\sum_{j=1}^2 U_{ij}e_j$ for $ 1\leq i\leq 2$ and $\tilde{e}_3=-\sqrt{-1}e_3$, where
$$ U= \frac{1}{\sqrt{2}} \left[ \begin{array}{cc} 1 & \sqrt{-1}\\ \sqrt{-1} & 1 \end{array} \right], \ \ \  \ \ \mbox{then} \ \ \
U \left[ \begin{array}{cc} \alpha & \beta_0 \\ -\beta_0 & \alpha  \end{array} \right] U^{-1}
= \left[ \begin{array}{cc} \alpha -\sqrt{-1}\beta_0 & 0 \\ 0 & \alpha +\sqrt{-1}\beta_0   \end{array} \right]  .$$
That is, under the new frame $\tilde{e}$ the Bismut connection matrix is diagonal and the components of Chern torsion
become $T^1_{13}=-T^2_{23}=a$, where $\tilde{e}_3$ is also uniquely determined up to a sign.

\begin{definition} \label{def9.2}
Let $(M^3,g)$ be a compact balanced BTP threefold of middle type. A local unitary frame $e$ on $M^3$ is called an {\bf admissible frame},
if under $e$ the non-zero Chern torsion components are $T^1_{13}=-T^2_{23}=a>0$.
\end{definition}

From the discussion above, it is clear that the following hold:

\begin{lemma} \label{claim9.1b}
 Given a compact balanced BTP threefold of middle type, locally it always admits admissible frames. If both $e$ and $\tilde{e}$ are admissible frames, then $$ \mbox{either} \ \ \ \ \ \tilde{e}_1=\rho_1 e_1, \ \tilde{e}_2=\rho_2 e_2, \ \tilde{e}_3= e_3, \ \ \ \ \ \mbox{or} \ \  \ \ \ \tilde{e}_1=\rho_1 e_2, \ \tilde{e}_2=\rho_2 e_1,  \ \tilde{e}_3= -e_3, $$
 where  $\rho_1$ and $\rho_2$ are smooth local functions satisfying $|\rho_1|=|\rho_2|=1$.
 \end{lemma}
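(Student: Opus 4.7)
The plan is to split the lemma into existence and uniqueness of admissible frames, and to rely on the intrinsic character of the kernel $L$ of the $B$ tensor (established by Lemma \ref{claim9.1}) to reduce everything to a small linear-algebra computation.

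For existence I would simply point to the paragraph preceding Definition \ref{def9.2}: start with any special frame $e$ (which exists by Lemma \ref{lemmaA}), note that in the middle-type case we have $a_1=a_2=a>0$ and $a_3=0$, and apply the explicit constant unitary change $\tilde e_1=\tfrac{1}{\sqrt2}(e_1+ie_2)$, $\tilde e_2=\tfrac{1}{\sqrt2}(ie_1+e_2)$, $\tilde e_3=-ie_3$. A direct substitution into $\widetilde T^j_{ik}=\sum_{\alpha,\beta,\gamma}P_{i\alpha}P_{k\gamma}\overline{P_{j\beta}}\,T^\beta_{\alpha\gamma}$ shows that the only non-zero components of the new Chern torsion are $\widetilde T^1_{13}=-\widetilde T^2_{23}=a$, which is the definition of admissibility.

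For uniqueness, let $e$ and $\tilde e$ be two admissible frames, and write $\tilde e=Pe$ with $P\in U(3)$. The crucial point is that $L=\ker B$ is an intrinsically defined holomorphic line bundle, and admissibility forces $e_3$ (respectively $\tilde e_3$) to be a unit section of $L$, while $\{e_1,e_2\}$ and $\{\tilde e_1,\tilde e_2\}$ are unitary bases of $L^\perp$. Hence $P$ is block diagonal: $\tilde e_3=\rho_3 e_3$ with $|\rho_3|=1$, and $\tilde e_a=\sum_{b=1,2}U_{ab}e_b$ for some $U\in U(2)$.

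The admissibility conditions $\widetilde T^1_{13}=a$ and $\widetilde T^2_{23}=-a$, combined with the transformation law and the fact that the only non-zero components of $T$ have lower indices $\{i,3\}$, give after a short computation
\begin{equation*}
\rho_3\bigl(|U_{11}|^2-|U_{12}|^2\bigr)=1,\qquad \rho_3\bigl(|U_{21}|^2-|U_{22}|^2\bigr)=-1.
\end{equation*}
Since $|\rho_3|=1$ and the real quantities in parentheses lie in $[-1,1]$ (rows of $U$ being unit vectors), both must equal $\pm 1$ and $\rho_3\in\{\pm 1\}$. When $\rho_3=1$, unitarity of $U$ forces $U=\mathrm{diag}(\rho_1,\rho_2)$; when $\rho_3=-1$, $U$ is forced to be anti-diagonal with entries $\rho_1,\rho_2$ of modulus one. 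These are precisely the two alternatives stated. I would then check that the remaining components of $\widetilde T$ automatically vanish in both cases; this is immediate from the block-diagonal/anti-diagonal structure of $P$ together with the support pattern of $T$, so no extra constraints emerge.

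There is no genuine obstacle here: the only substantive ingredient is the intrinsic identification of $L=\ker B$ already secured by Lemma \ref{claim9.1}, after which everything reduces to a two-by-two unitary computation. The main care needed is simply to verify that the two displayed scalar equations exhaust the constraints, i.e.\ that the ``cross'' torsion components like $\widetilde T^1_{23}$ are automatically zero — a routine check given the block form of $P$.
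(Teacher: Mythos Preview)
Your proof is correct and follows essentially the same approach as the paper. The paper does not give a separate proof of this lemma; it simply says ``From the discussion above, it is clear that the following hold,'' relying on the explicit special-to-admissible change before Definition~\ref{def9.2} for existence and on the torsion computation in the proof of Lemma~\ref{claim9.1} (which pinned $e_3$ up to sign for special frames) for uniqueness. Your argument carries out the analogous torsion-transformation computation directly for admissible frames, which is if anything more self-contained than what the paper records.
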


Also, under an admissible frame $e$, the Bismut connection matrix is diagonal and
\begin{equation} \label{eq:9.1b}
 \theta^b = \left[ \begin{array}{ccc} \theta^b_{11} & & \\ & \theta^b_{22} & \\ & & 0 \end{array} \right]\!\!,  \ \
 \gamma = a \!\left[ \begin{array}{ccc} \varphi_3\!-\!\overline{\varphi}_3 & 0 & \overline{\varphi}_1 \\
 & \overline{\varphi}_3\!-\!\varphi_3 & -\overline{\varphi}_2 \\
 -\varphi_1 & \varphi_2 & 0 \end{array} \right]\!\!, \ \
 \theta = \left[ \begin{array}{ccc} \theta_{11} & 0 & -a\overline{\varphi}_1 \\
0 & \theta_{22} & a\overline{\varphi}_2\\
a\varphi_1& -a\varphi_2 & 0 \end{array} \right]\!\!,
\end{equation}
where $\gamma = \theta^b-\theta$ is given by $\gamma_{ij} = \sum_k \big( T^j_{ik} \varphi_k - \overline{T^i_{jk}} \overline{\varphi}_k \big)$ under any unitary frame. We have
 \begin{equation} \label{eq:9.2c}
   \begin{cases}
   \theta_{11} =\theta^b_{11} - a(\varphi_3 - \overline{\varphi}_3), \\
   \theta_{22}  =\theta^b_{22} + a(\varphi_3 - \overline{\varphi}_3), \\
     d\varphi = -\,^t\!\theta \wedge \varphi + \tau \ = \ \left[ \begin{array}{c} -\theta_{11}\wedge \varphi_1 \\  -\theta_{22}\wedge \varphi_2 \\
     a (\varphi_{2\bar{2}}-\varphi_{1\bar{1}}) \end{array} \right] .
     \end{cases}
     \end{equation}
By $\Theta^b=d\theta^b-\theta^b \wedge \theta^b$, we see that the Bismut curvature matrix $\Theta^b$ under $e$ is also diagonal, satisfying $\Theta^b_{3\ast}=\Theta^b_{12}=0$. From $\Theta^b_{ij}=\sum_{k,\ell} R^b_{k\bar{\ell}i\bar{j}}\varphi_k\wedge \overline{\varphi}_{\ell}$, we know that under $e$ we have
$ R^b_{\ast \bar{\ast} 3\bar{\ast}} = R^b_{\ast \bar{\ast} 1\bar{2}} =0$. Recall that for any BTP metric and under any unitary frame, it holds that
\begin{equation} \label{eq:9.2b}
 R^b_{i\bar{j}k\bar{\ell}} = R^b_{k\bar{\ell}i\bar{j}}, \ \ \ \ \ R^b_{i\bar{j}k\bar{\ell}} - R^b_{k\bar{j}i\bar{\ell}} = \sum_s \big( T^j_{ks}\overline{ T^i_{\ell s}} +  T^{\ell}_{is}\overline{ T^k_{j s}} - T^j_{is}\overline{ T^k_{\ell s}} - T^{\ell}_{ks}\overline{ T^i_{j s}} - T^s_{ik}\overline{ T^s_{j\ell }} \big) \ \ \
 \end{equation}
for any indices $i$, $j$, $k$, $\ell$. The first equality is part (2) of Proposition 2.6 in \cite{ZhaoZ24}, and the second one is by Definition 2.3 and formula (2.4) in Proposition 2.5 of \cite{ZhaoZ24}. In particular, for our $(M^3,g)$ under an admissible frame $e$, by the first equation of (\ref{eq:9.2b}) and  the fact that $\Theta^b_{3\ast}=\Theta^b_{12}=0$, we know that the only possibly non-zero Bismut curvature components are $R^b_{1\bar{1}1\bar{1}}$, $R^b_{2\bar{2}2\bar{2}}$,  and $R^b_{1\bar{1}2\bar{2}}=R^b_{2\bar{2}1\bar{1}}$. By the second equality of (\ref{eq:9.2b}), for $1\leq i\neq k\leq 3$, we get
\begin{eqnarray*}
 R^b_{i\bar{i}k\bar{k}} - R^b_{k\bar{i}i\bar{k}} & = & \sum_s \big( |T^i_{ks}|^2 +  |T^{k}_{is}|^2 - 2Re(T^i_{is}\overline{ T^k_{k s}}) - |T^s_{ik}|^2 \big) \\
 & = & |T^i_{ki}|^2 + |T^k_{ik}|^2 - 2Re (T^i_{i3} \overline{ T^k_{k3}} )- ( |T^i_{ik}|^2 + |T^k_{ik}|^2) \ = \  -2Re (T^i_{i3} \overline{ T^k_{k3}} ).
 \end{eqnarray*}
From this we deduce
\begin{equation*}
R^b_{1\bar{1}2\bar{2}} = R^b_{2\bar{1}1\bar{2}} - 2Re (T^1_{13} \overline{ T^2_{23}} ) = 2a^2. \hspace{1cm}
\end{equation*}
Therefore
\begin{equation} \label{eq:9.3b}
 \Theta^b = \left[ \begin{array}{ccc} \Theta^b_{11} & & \\ & \Theta^b_{22} & \\ & & 0 \end{array} \right],  \ \ \ \ \Theta^b_{11} = R^b_{1\bar{1}1\bar{1}} \varphi_{1\bar{1}} + 2a^2 \varphi_{2\bar{2}}, \ \ \ \Theta^b_{22} = 2a^2\varphi_{1\bar{1}} + R^b_{2\bar{2}2\bar{2}} \varphi_{2\bar{2}}.
 \end{equation}
Taking trace, we get the formula for the (first) Bismut (or Chern) Ricci form:
\begin{equation} \label{eq:9.3c}
 Ric (\omega) = \sqrt{-1} \mbox{tr} \Theta^b = \sqrt{-1}( \lambda_1 \varphi_{1\bar{1}} + \lambda_2 \varphi_{2\bar{2}}), \ \ \ \ \ \lambda_1 = R^b_{1\bar{1}1\bar{1}} + 2a^2, \ \ \ \lambda_2 = R^b_{2\bar{2}2\bar{2}} + 2a^2.
 \end{equation}

\begin{lemma}  \label{remark9.2}
Let $(M^3,g)$ be a compact balanced BTP threefold of middle type. Then the restricted holonomy group $\mbox{Hol}_0^{\,b}(M)$ for the Bismut connection is equal to $ U(1)\!\times \!U(1) \!\times \!1$, which is an abelian group.
\end{lemma}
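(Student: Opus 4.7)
The plan is to combine the explicit diagonal form of the Bismut connection matrix and curvature tensor under admissible frames with the Ambrose--Singer theorem to pin down $\mbox{Hol}_0^{\,b}(M)$ exactly.

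For the inclusion $\mbox{Hol}_0^{\,b}(M) \subseteq U(1)\times U(1)\times 1$, I would start from \eqref{eq:9.1b}: under any admissible frame $e$ one has $\theta^b = \mathrm{diag}(\theta^b_{11},\theta^b_{22},0)$, so $\nabla^b$ preserves the ordered orthogonal splitting $T_pM = \mathbb{C} e_1 \oplus \mathbb{C} e_2 \oplus \mathbb{C} e_3$ and acts trivially on the last summand. By Lemma \ref{claim9.1b}, this decomposition is globally well-defined up to at most an unordered swap $(e_1,e_2)\leftrightarrow(\rho_1 e_2,\rho_2 e_1)$ coupled with $e_3\mapsto -e_3$; this ambiguity disappears for restricted holonomy because any null-homotopic loop can be contracted inside coordinate patches supporting a single admissible frame. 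Hence every element of $\mbox{Hol}_0^{\,b}(M)$ acts diagonally with trivial third entry, giving the inclusion.

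For the reverse inclusion, I would apply the Ambrose--Singer theorem. Because the ambient algebra $\mathfrak{u}(1)\oplus\mathfrak{u}(1)\oplus 0$ is abelian, Lie brackets and conjugation by parallel transport add nothing, so $\mathfrak{hol}_0^b$ is simply the $\mathbb{R}$-linear span of the curvature values $R^b(X,Y)(p)$ over $p\in M$. By \eqref{eq:9.3b} these reduce to the two vectors
\[
\bigl(R^b_{1\bar 1 1\bar 1}(p),\ 2a^2\bigr) \quad\text{and}\quad \bigl(2a^2,\ R^b_{2\bar 2 2\bar 2}(p)\bigr)
\]
inside $\mathbb{R}^2\cong\mathfrak{u}(1)\oplus\mathfrak{u}(1)$. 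Since $2a^2>0$, they span a two-dimensional subspace exactly when $R^b_{1\bar 1 1\bar 1}(p)\,R^b_{2\bar 2 2\bar 2}(p) \neq 4a^4$ at some point $p\in M$, and in that case Ambrose--Singer together with the connectedness of $U(1)\times U(1)\times 1$ yields $\mbox{Hol}_0^{\,b}(M) = U(1)\times U(1)\times 1$.

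The hard part is therefore excluding the rigid identity $R^b_{1\bar 1 1\bar 1}\cdot R^b_{2\bar 2 2\bar 2} \equiv 4a^4$ on $M$. A short linear-algebra check shows this identity forces both Bismut Ricci eigenvalues to be globally constant, in which case $s$ and $\sigma_2$ are constant and by Lemma \ref{lemmaPodestaZ} the manifold falls into the BAS regime. I expect the exclusion to follow either by differentiating the BTP relation $\nabla^b T = 0$ against the structure equations for $\theta^b$ to produce a contradictory extra identity on the Bismut Ricci eigenvalues, or by direct inspection of the explicit Lie--Hermitian examples in Proposition \ref{LieHermitian} that realize the BAS middle-type locus; either way the abelianness of the ambient group $U(1)\times U(1)\times 1$ is preserved, which is the chief point of the lemma.
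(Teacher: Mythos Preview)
Your inclusion argument is fine and matches the paper's, although the paper gets there more directly: since $\Theta^b$ is diagonal with vanishing third row under any admissible frame (by \eqref{eq:9.3b}), the holonomy theorem immediately yields $\mbox{Hol}_0^{\,b}(M)\subseteq U(1)\times U(1)\times 1$, without any need to track how admissible frames patch.

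For the reverse inclusion you have correctly isolated the obstruction, but your plan to exclude the identity $R^b_{1\bar 1 1\bar 1}\,R^b_{2\bar 2 2\bar 2}\equiv 4a^4$ cannot succeed. Every Lie--Hermitian example in Proposition \ref{LieHermitian} (in particular the nilmanifold $N^3$) has $R^b_{1\bar 1 1\bar 1}=R^b_{2\bar 2 2\bar 2}=-2a^2$, so the product is exactly $4a^4$; indeed $\Theta^b_{11}=-\Theta^b_{22}$ identically there, and the curvature values span only the line $\{(t,-t,0)\}$. Hence for $N^3$ the restricted Bismut holonomy is the one-dimensional circle $\{(e^{it},e^{-it},1)\}$, strictly smaller than $U(1)\times U(1)\times 1$. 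The paper's own one-line justification (``both circle factors must be in presence'' because $R^b_{1\bar 1 2\bar 2}=2a^2\neq 0$) shows only that the holonomy projects nontrivially onto each $U(1)$ factor, which does not force two-dimensionality of the holonomy algebra. Your closing observation is therefore the right resolution: the inclusion into $U(1)\times U(1)\times 1$ already secures abelianness of $\mbox{Hol}_0^{\,b}(M)$, and that is the property actually used downstream.
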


\begin{proof}
Let $e$ be an admissible frame. For any real tangent vectors $x$, $y$, the matrix $\Theta^b(x,y)$ is in the diagonal form
$\mathrm{diag} \{ \sqrt{-1}p, \sqrt{-1}q, 0\}$ where $ p, q \in {\mathbb R}$. By the holonomy theorem we know that the restricted holonomy group $\mbox{Hol}_0^{\,b}(M)$ is contained in $U(1)\!\times \!U(1) \!\times \!1$. Since $R^b_{1\bar{1}2\bar{2}}=R^b_{2\bar{2}1\bar{1}}=2a^2>0$, we know that both circle factors must be in  presence, so  $\mbox{Hol}_0^{\,b}(M)= U(1)\!\times \!U(1) \!\times \!1$.
\end{proof}

Let us denote by $\mbox{Hol}^{\,b}(M)\subset U(3)$ the global holonomy group for the Bismut connection. Note that here we did not claim that the global Bismut holonomy group is abelian. Also, in the formula (\ref{eq:9.3c}) the two eigenvalue functions $\lambda_1$ and $\lambda_2$ for the Ricci form may only be defined locally, as an admissible frame change may swap $e_1$ and $e_2$. However, if we denote by $s$ and $\sigma_2$ the first and second elementary symmetric functions of the eigenvalues of the Ricci form $Ric(\omega)$, or equivalently,
$$ 3 Ric(\omega )\wedge \omega^2 = s \,\omega^3, \ \ \ \ \ \ 3 (Ric(\omega ))^2\wedge \omega = \sigma_2 \,\omega^3, $$
then the functions $s$ (which is the Bismut (or Chern) scalar curvature) and $\sigma_2$ are globally defined on $M^3$. As a result, the set $\{ \lambda_1, \lambda_2\}$ is well-defined on $M^3$, since it is the set of two roots of the quadratic polynomial $\lambda^2-s\lambda+\sigma_2=0$.  Equivalently speaking, we have

\begin{remark}
Although $R^b_{1\bar{1}1\bar{1}}$ and $R^b_{2\bar{2}2\bar{2}}$ may only be local functions on $M^3$, the set $\{ R^b_{1\bar{1}1\bar{1}}, R^b_{2\bar{2}2\bar{2}}\}$ is globally defined on $M^3$, as they (after adding $2a^2$) are roots of the quadratic polynomial whose coefficients are global functions $s$ and $\sigma_2$. The entire Bismut (hence Chern and Riemannian) curvature are determined by $s$ and $\sigma_2$.
\end{remark}

Let $(M^3,g)$ be a compact balanced BTP threefold of middle type. Then the Bismut curvature is determined by two real-valued smooth functions $s$ and $\sigma_2$ on $M^3$. If it is locally homogeneous, then both $s$ and $\sigma_2$ are constants. Conversely, if both $s$ and $\sigma_2$ are constants, then as observed in \cite[Proposition 1.7]{PodestaZ}, $(M^3,g)$ is actually {\em Bismut Ambrose-Singer} (abbreviated as BAS, which means that the Bismut connection has parallel torsion and curvature), in particular it is locally homogeneous. For the convenience of readers we include the sketch of proof below.

\begin{lemma} [\cite{PodestaZ}] \label{lemmaPodestaZ}
Let $(M^3,g)$ be a compact balanced BTP threefold of middle type. Denote by $s$ and $\sigma_2$ the first and second elementary symmetric functions of the eigenvalues of the Chern Ricci form. If both  $s$ and $\sigma_2$ are constants, then the metric $g$ is BAS. In particular, in this case the universal cover of $(M^3,g)$ is a homogeneous Hermitian manifold.
\end{lemma}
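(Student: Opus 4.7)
The plan is to exploit the very rigid form that the Bismut connection and curvature tensor take under an admissible frame (as set up in \S\ref{mddtype3D}), and check the equation $\nabla^b R^b=0$ directly. The BTP hypothesis already gives $\nabla^b T^b=0$, so only the curvature parallelness needs to be established for $g$ to be BAS.

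First, I would fix an admissible frame $e$ on an open set. By (\ref{eq:9.1b}) the Bismut connection matrix is diagonal,
\[\theta^b=\mathrm{diag}(\theta^b_{11},\theta^b_{22},0),\qquad \overline{\theta^b_{11}}=-\theta^b_{11},\ \overline{\theta^b_{22}}=-\theta^b_{22},\]
and by (\ref{eq:9.3b}) the only possibly non-zero Bismut curvature components are
\[R^b_{1\bar{1}1\bar{1}},\quad R^b_{2\bar{2}2\bar{2}},\quad R^b_{1\bar{1}2\bar{2}}=R^b_{2\bar{2}1\bar{1}}=2a^2.\]
The last one is trivially a constant since $a$ is a global constant. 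So everything reduces to the two real-valued functions $\rho_1:=R^b_{1\bar{1}1\bar{1}}$ and $\rho_2:=R^b_{2\bar{2}2\bar{2}}$.

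Next, I would write down the covariant-derivative formula for $R^b$ under $e$. For a component $R^b_{i\bar{j}k\bar{\ell}}$, the diagonal form of $\theta^b$ means that the only connection terms entering $\nabla^b R^b_{i\bar{j}k\bar{\ell}}$ are those with matched indices, and they come with a combination $\theta^b_{ii}+\overline{\theta^b_{jj}}+\theta^b_{kk}+\overline{\theta^b_{\ell\ell}}$. In particular:
\[\nabla^b R^b_{1\bar{1}1\bar{1}}=d\rho_1-2\rho_1(\theta^b_{11}+\overline{\theta^b_{11}})=d\rho_1,\quad \nabla^b R^b_{2\bar{2}2\bar{2}}=d\rho_2,\quad \nabla^b R^b_{1\bar{1}2\bar{2}}=d(2a^2)=0,\]
because $\theta^b_{ii}$ is purely imaginary. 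For the remaining index patterns, either the component itself is zero and the relevant correction terms are also zero (again because of the diagonal form of $\theta^b$ and the fact that all non-zero $R^b$-components have repeated indices $i\bar{i}k\bar{k}$), or the pattern forces involvement of $\theta^b_{33}=0$. So $\nabla^b R^b=0$ is equivalent to $d\rho_1=d\rho_2=0$ locally.

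Finally, I would use the hypothesis that $s$ and $\sigma_2$ are (global) constants. From (\ref{eq:9.3c}), $\lambda_1=\rho_1+2a^2$ and $\lambda_2=\rho_2+2a^2$ are the two eigenvalues of the Chern Ricci form (the third being $0$), and they satisfy $\lambda_1+\lambda_2=s$, $\lambda_1\lambda_2=\sigma_2$. Hence on each connected open set $U$ where an admissible frame exists, $\lambda_1$ and $\lambda_2$ are continuous real-valued functions on $U$ taking values in the fixed two-element multiset of roots of $\lambda^2-s\lambda+\sigma_2=0$; by continuity each is locally constant, so $d\rho_1=d\rho_2=0$. Therefore $\nabla^b R^b\equiv 0$ on $M^3$, giving BAS. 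The homogeneity conclusion then follows from Sekigawa's Hermitian version of the Ambrose--Singer theorem \cite{Sekigawa}.

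I do not anticipate any serious obstacle: the main subtlety is simply bookkeeping in step two, namely verifying that under the diagonal $\theta^b$ the off-type components of $\nabla^b R^b$ automatically vanish and the matched-index ones lose all connection contributions because $\theta^b_{ii}$ is purely imaginary. Some care is also needed in step three to handle the degenerate case $\lambda_1=\lambda_2$, which is fine since then both $\rho_1$ and $\rho_2$ equal the unique root, hence are constants.
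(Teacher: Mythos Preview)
Your proposal is correct and follows essentially the same route as the paper: exploit the diagonal form of $\theta^b$ under an admissible frame so that $\nabla^bR^b=0$ reduces to $dR^b_{i\bar jk\bar\ell}=R^b_{i\bar jk\bar\ell}(\theta^b_{ii}-\theta^b_{jj}+\theta^b_{kk}-\theta^b_{\ell\ell})$, which holds trivially in all cases once $R^b_{1\bar11\bar1}$ and $R^b_{2\bar22\bar2}$ are known to be constants. Your added detail---deducing the constancy of $\rho_1,\rho_2$ via continuity from the fixed roots of $\lambda^2-s\lambda+\sigma_2=0$, including the degenerate case---is a point the paper leaves implicit, but the argument is otherwise identical.
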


\begin{proof}
Let $e$ be an admissible frame. The as shown above the only possibly non-zero components of the Bismut curvature are $R^b_{1\bar{1}1\bar{1}}$, $R^b_{2\bar{2}2\bar{2}}$, and $R^b_{1\bar{1}2\bar{2}} =R^b_{2\bar{2}1\bar{1}} =2a^2$, which are all constants. Since $\theta^b$ is diagonal, the condition  $\nabla^bR^b=0$ is equivalent to the following
$$ dR^b_{i\bar{j}k\bar{\ell}} = R^b_{i\bar{j}k\bar{\ell}} \big( \theta^b_{ii} - \theta^b_{jj} + \theta^b_{kk} - \theta^b_{\ell  \ell } \big) , \ \ \ \ \ \ \forall \ 1\leq i,j,k,\ell\leq 3. $$
If $i\neq j$ or $k\neq \ell$, then $R^b_{i\bar{j}k\bar{\ell}} =  0$ hence both sides are zero. If $i=j$ and $k=\ell$, then the parenthesis on the right is zero, while the left side is also zero as $R^b_{i\bar{i}k\bar{k}}$  is a constant. So $g$ is BAS.
\end{proof}

Next let us prove an interesting property enjoyed by compact balanced BTP threefolds of middle type.

\begin{proposition} \label{prop6.7}
Let $(M^3,g)$ be a compact balanced BTP threefold of middle type. Then $M^3$ does not admit any pluriclosed metric.
\end{proposition}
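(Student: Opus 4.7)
The plan is to argue by contradiction: suppose $(M^3,g)$ admits a pluriclosed Hermitian metric $\omega'$ and extract a sign contradiction from an integration-by-parts identity applied to two globally defined real $(1,1)$-forms. Under an admissible frame $e$, these are the ``horizontal'' form $\omega_H := \sqrt{-1}(\varphi_{1\overline{1}} + \varphi_{2\overline{2}})$, associated with the positive rank-$2$ eigenspace of the $B$ tensor, and the ``vertical'' form $\omega_V := \sqrt{-1}\,\varphi_3 \wedge \overline{\varphi}_3$, associated with the kernel line bundle $L$. By Lemma \ref{claim9.1b}, every admissible frame change preserves the sum $\varphi_{1\overline{1}}+\varphi_{2\overline{2}}$ (any index swap is absorbed into the sum) and preserves $\varphi_3\wedge\overline{\varphi}_3$ (the sign ambiguity in $e_3$ is squared away), so both $\omega_H$ and $\omega_V$ are well defined globally on $M^3$.

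The first key step is to prove that $\omega_H$ is $d$-closed. From (\ref{eq:9.1b}), the diagonal Chern connection $1$-forms $\theta_{11} = \theta^b_{11} - a(\varphi_3 - \overline{\varphi}_3)$ and $\theta_{22} = \theta^b_{22} + a(\varphi_3 - \overline{\varphi}_3)$ are purely imaginary, since $\theta^b$ is skew-Hermitian and $a(\varphi_3 - \overline{\varphi}_3)$ is purely imaginary. Combining $d\varphi_i = -\theta_{ii} \wedge \varphi_i$ with $d\overline{\varphi}_i = \theta_{ii}\wedge \overline{\varphi}_i$ (from $\overline{\theta_{ii}} = -\theta_{ii}$), a one-line expansion yields $d(\varphi_i \wedge \overline{\varphi}_i) = 0$ for $i = 1,2$, hence $d\omega_H = 0$ globally.

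The second, decisive, step is to establish the identity
\[
\partial\overline{\partial}\,\omega_V \;=\; -\sqrt{-1}\,a^2\,\omega_H^2 \quad\text{on } M^3.
\]
Since $d\varphi_3 = a(\varphi_{2\overline{2}} - \varphi_{1\overline{1}})$ from (\ref{eq:9.2c}) is purely of type $(1,1)$, one has $\partial \varphi_3 = 0$, $\overline{\partial}\varphi_3 = a(\varphi_{2\overline{2}} - \varphi_{1\overline{1}})$, and by conjugation $\partial\overline{\varphi}_3 = -a(\varphi_{2\overline{2}} - \varphi_{1\overline{1}})$. Using also $\partial \varphi_{i\overline{i}} = 0$ from Step~1, the computation of $\partial\overline{\partial}\omega_V$ collapses onto $-\sqrt{-1}a^2(\varphi_{2\overline{2}} - \varphi_{1\overline{1}})^2$, which simplifies to $-\sqrt{-1}a^2\omega_H^2$ since $\omega_H^2 = -2\,\varphi_{1\overline{1}\,2\overline{2}}$.

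Once this identity is in hand, the conclusion follows quickly. Two applications of Stokes's theorem on the closed manifold $M^3$, together with $\partial\overline{\partial}\omega' = 0$, give
\[
\int_M \partial\overline{\partial}\,\omega_V \wedge \omega' \;=\; \int_M \omega_V \wedge \partial\overline{\partial}\, \omega' \;=\; 0.
\]
Substituting the identity of Step 2 converts the left-hand integral into $-\sqrt{-1}\,a^2 \int_M \omega_H^2 \wedge \omega'$. Since $\omega_H^2 = 2(\sqrt{-1}\,\varphi_1\wedge\overline{\varphi}_1)\wedge(\sqrt{-1}\,\varphi_2\wedge\overline{\varphi}_2)$ is a strongly positive $(2,2)$-form vanishing exactly along $L$, and $\omega'$ is positive definite (so in particular its component along $\varphi_3\wedge\overline{\varphi}_3$ is strictly positive), the wedge $\omega_H^2 \wedge \omega'$ is a strictly positive top form, forcing $\int_M \omega_H^2 \wedge \omega' > 0$---contradicting the vanishing above.

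The delicate point I anticipate is the global closedness $d\omega_H = 0$: confirming that the local identity survives all admissible frame changes reduces cleanly to the imaginary character of $\theta_{11}, \theta_{22}$, a feature particular to the middle-type admissible frame structure. Everything else---the collapse computing $\partial\overline{\partial}\omega_V$, the two Stokes integrations, and the strong positivity of $\omega_H^2$---is essentially routine.
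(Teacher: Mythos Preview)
Your proof is correct and follows essentially the same route as the paper's: both arguments compute $\partial\overline{\partial}$ of the global $(1,1)$-form $\varphi_{3\bar 3}$ (your $\omega_V$ up to a factor of $\sqrt{-1}$, the paper's $\Phi$), obtain a positive multiple of $\varphi_{1\bar 1}\varphi_{2\bar 2}$, and then wedge with the hypothetical pluriclosed metric and integrate to force a sign contradiction. Your packaging via the named forms $\omega_H,\omega_V$ is a cosmetic variation; the ``delicate point'' you flag about global closedness of $\omega_H$ is in fact automatic once $\omega_H$ is known to be globally defined (it equals $\omega-\omega_V$), since $d$ is a global operator and the identity $d\omega_H=0$ verified pointwise in admissible frames then holds everywhere.
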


\begin{proof}
Let $e$ be an admissible frame in $M^3$, with dual coframe $\varphi$. Since $e_3$ is uniquely determined up to a sign, the $(1,1)$-form $\Phi = \varphi_3\wedge \overline{\varphi}_3=\varphi_{3\bar{3}}$ exists globally on $M^3$. By (\ref{eq:9.2c}), we know that locally we have $d\varphi_{1\bar{1}}=d\varphi_{2\bar{2}}=0$ and $d\varphi_3=a(\varphi_{2\bar{2}}-\varphi_{1\bar{1}})=-d\overline{\varphi}_3$, hence
\begin{eqnarray*}
&&  d\Phi \ = \  d\varphi_3\wedge (\varphi_3+\overline{\varphi}_3)  \ = \ a(\varphi_{2\bar{2}}-\varphi_{1\bar{1}}) \wedge (\varphi_3+\overline{\varphi}_3),\\
&& \partial \overline{\partial}\Phi \ = \  - d\partial\Phi \ = \ -d\big( a(\varphi_{2\bar{2}}-\varphi_{1\bar{1}}) \wedge \varphi_3\big)  \ = \ -a(\varphi_{2\bar{2}}-\varphi_{1\bar{1}}) \wedge d\varphi_3 \ = \ 2a^2 \varphi_{1\bar{1}}\varphi_{2\bar{2}}.
\end{eqnarray*}
Given any Hermitian metric $g_0$ on $M^3$ with K\"ahler form $\omega_0$, locally we have $\omega_0=\sqrt{-1} \sum_{i,j} h_{i\bar{j}}\varphi_{i\bar{j}}$, where $(h_{i\bar{j}})$ is positive definite. Let $f$ be the function on $M^3$ defined by $ \partial \overline{\partial}\Phi \wedge \omega_0=f\omega^3$, then by
$$ \partial \overline{\partial}\Phi \wedge \omega_0 =   2a^2\varphi_{1\bar{1}} \varphi_{2\bar{2}} \wedge \omega_0 = -\frac{1}{3}a^2h_{3\bar{3}} \,\omega^3, $$
we see that locally $f=-\frac{1}{3}a^2h_{3\bar{3}}$, hence $f<0$ everywhere on $M^3$. Therefore, $\int_M \partial \overline{\partial}\Phi \wedge \omega_0  <0$. In particular, $g_0$ cannot be pluriclosed as $M^3$ is compact. This completes the proof of the proposition.
\end{proof}

Recall that the Fino-Vezzoni Conjecture (\cite{FV2, FV16}) states that any compact complex manifold admitting both a balanced and a pluriclosed metric must be K\"ahlerian, while Streets-Tian Conjecture \cite{ST} states that any Hermitian-symplectic manifold must be K\"ahlerian. Supplementing Corollary 1.19 of \cite{ZhaoZ24}, we now have the following:

\begin{corollary} \label{cor6.8}
Both Fino-Vezzoni Conjecture and Streets-Tian Conjecture hold for all compact  BTP threefolds.
\end{corollary}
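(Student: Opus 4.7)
The plan is to invoke Theorem~\ref{thm1.1} to reduce the balanced case to three subcases, since the non-balanced case is already settled by \cite[Corollary~1.19]{ZhaoZ24}. First I would note that the two conjectures line up: a Hermitian-symplectic form has a pluriclosed $(1,1)$-part, so non-existence of any pluriclosed metric immediately rules out Hermitian-symplectic ones; conversely, any K\"ahler metric is pluriclosed. It therefore suffices, in each of the three balanced subcases, to either exhibit a K\"ahler metric on $M^3$ or to prove that no pluriclosed metric exists.

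For $\mathrm{rank}\,B=1$, the underlying complex manifold is the Fano threefold ${\mathbb P}(T_{{\mathbb P}^{2}})$, which admits the K\"ahler-Einstein metric $\tilde{g}$ constructed in \S\ref{FANO}; both conjectures hold trivially. For $\mathrm{rank}\,B=2$ (middle type), Proposition~\ref{prop6.7} shows outright that $M^3$ admits no pluriclosed Hermitian metric, so both Fino-Vezzoni and Streets-Tian hold vacuously.

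The remaining case is $\mathrm{rank}\,B=3$, in which $M^3$ is a compact quotient of the simple complex Lie group $SL(2,{\mathbb C})$ equipped with (a scaling of) the standard metric of Example~\ref{exampleSL2C}. For Streets-Tian, I would simply cite \cite{DLV}, which excludes Hermitian-symplectic metrics on compact Chern flat non-K\"ahler manifolds in any dimension. For Fino-Vezzoni, the key observation is that $M^3$ is complex parallelisable with non-abelian parallelising Lie algebra ${\mathfrak s}{\mathfrak l}(2,{\mathbb C})$; a direct argument based on the bracket relations \eqref{sl2C} and the holomorphically trivial canonical bundle established in the $\mathrm{rank}\,B=3$ proposition yields that no pluriclosed metric $\omega_0$ can exist on $M^3$. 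Concretely, the idea is to produce a global real $(1,1)$-form $\Phi$ descending from the left-invariant coframe $\varphi$ such that $\partial\overline{\partial}\Phi$ has a definite sign when paired with $\omega_0$, mirroring the computation in the proof of Proposition~\ref{prop6.7}, and then integrate over the compact $M^3$ to reach a contradiction with $\partial\overline{\partial}\omega_0=0$.

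The main obstacle is the Chern flat case for Fino-Vezzoni: unlike the middle type, no holomorphic line subbundle of $T_{\!M}$ is canonically singled out by the torsion, so the construction of a suitable auxiliary $(1,1)$-form $\Phi$ is less immediate than in Proposition~\ref{prop6.7}. I expect the cleanest route is to exploit the skew-symmetry of the Chern connection matrix $\theta$ established in Case~2 of Proposition~\ref{Btype}, together with the nowhere-vanishing holomorphic $(3,0)$-form $\psi=\varphi_1\wedge\varphi_2\wedge\varphi_3$, so that a natural invariant combination has $\partial\overline{\partial}$ equal to a positive multiple of $\varphi_{1\bar1}\wedge\varphi_{2\bar2}+\varphi_{2\bar2}\wedge\varphi_{3\bar3}+\varphi_{3\bar3}\wedge\varphi_{1\bar1}$, whose pairing with any positive $\omega_0$ is strictly positive, yielding the required contradiction.
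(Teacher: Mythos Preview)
Your overall strategy matches the paper's exactly: split into non-balanced (handled by \cite[Corollary~1.19]{ZhaoZ24}) and balanced, then use Theorem~\ref{thm1.1} to break the balanced case into Fano (already K\"ahlerian), middle type (Proposition~\ref{prop6.7} excludes pluriclosed metrics), and Chern flat, with \cite{DLV} handling Streets--Tian in the Chern flat case.

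The only place you diverge is the Fino--Vezzoni argument in the Chern flat case, and here you are overcomplicating what is in fact the simplest step. You do not need to construct an exotic auxiliary $(1,1)$-form from the holomorphic $3$-form or the skew-symmetry of $\theta$: the K\"ahler form $\omega$ of the given balanced BTP metric $g$ itself does the job. For any Chern flat Hermitian manifold one has the identity
\[
\sqrt{-1}\,\partial\overline{\partial}\omega \;=\; {}^t\tau\wedge\overline{\tau},
\]
obtained immediately from $d\varphi=\tau$ and $d\tau=0$ under a $\nabla^c$-parallel unitary frame. Since $g$ is non-K\"ahler, $\tau\not\equiv 0$, and ${}^t\tau\wedge\overline{\tau}\wedge\omega_0>0$ pointwise for any positive $(1,1)$-form $\omega_0$. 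Integrating over the compact $M^3$ gives $\int_M\sqrt{-1}\,\partial\overline{\partial}\omega\wedge\omega_0>0$, contradicting $\partial\overline{\partial}\omega_0=0$. This is precisely the paper's argument, and it already delivers the ``positive multiple of $\varphi_{1\bar1}\varphi_{2\bar2}+\varphi_{2\bar2}\varphi_{3\bar3}+\varphi_{3\bar3}\varphi_{1\bar1}$'' that you were anticipating, without any additional construction. Your proposed route would work in principle, but it obscures the fact that the relevant identity is a one-line consequence of Chern flatness and needs nothing specific to $SL(2,{\mathbb C})$ or to the special frame.
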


\begin{proof}
For balanced (but non-K\"ahler) BTP threefolds, the Fano case is already K\"ahlerian, while the middle type ones do not admit any pluriclosed metric by the above proposition, hence Fino-Vezzoni Conjecture holds, and the Streets-Tian Conjecture also holds as any Hermitian-symplectic metric is necessarily pluriclosed. We are only left with the Chern flat case. Streets-Tian Conjecture is known to be true for compact Chern flat manifolds in all dimensions by the work of Di Scala-Lauret-Vezzoni  \cite[Proposition 3.3]{DLV}, while in dimension $3$, both conjectures hold since any compact, non-K\"ahler Chern flat threefold $(M^3,g)$ cannot admit any pluriclosed metric, because $\sqrt{-1} \partial \overline{\partial} \omega = \,^t\!\tau \wedge \overline{\tau}$,
where $\tau$ is column vector of Chern torison $(2,0)$-forms. If $g_0$ is a pluriclosed metric on $M^3$, then one would have
$$ 0 =\int_M \sqrt{-1} \partial \overline{\partial} \omega \wedge \omega_0 = \int_M \,^t\!\tau \wedge \overline{\tau} \wedge \omega_0 >0, $$
which is a contradiction. Note that for the non-balanced case, both conjectures are valid in complex dimension $3$ by \cite[Corollary 1.19]{ZhaoZ24}.
\end{proof}

Therefore, combining Proposition \ref{prop6.7} and Corollary \ref{cor6.8}, we get the proof of Theorem \ref{thm1.2} stated in the introduction.


\subsection{A Hermitian Lie algebra example}\label{example}

In \cite[Proposition 1.10]{ZhaoZ24}, we have seen the characterization of BTP Hermitian structures on nilpotent Lie algebras. In particular, in complex dimension $3$ there is only one (up to scaling the metric by a constant multiple) balanced BTP Hermitian nilpotent Lie algebra $N^3$ with nilpotent $J$ in the sense of \cite{CFGU}, whose structure equation is given by
\begin{equation} \label{nil}
d\varphi_1=d\varphi_2=0,\ \ \ \ \ \ \ d\varphi_3 = -a\varphi_{1\bar{1}} + a \varphi_{2\bar{2}},
\end{equation}
where $a>0$ is a constant, $\varphi$ is a unitary coframe, and as before we have abbreviated $\varphi_i\wedge \overline{\varphi}_j$ as  $\varphi_{i\bar{j}}$ for convenience. In this subsection, we will drop the nilpotency requirement and figure out all Hermitian Lie algebras in complex dimension $3$ that are balanced BTP of middle type. It turns that there are exactly two families of such Lie algebras, $A_{s,t}$ parameterized by $(s,t)\in {\mathbb R}^2$, and $B_{z,t}$ parameterized by $(z,t)\in {\mathbb C}\times {\mathbb R}$, such that at the origin they are just $N^3$, namely, $A_{0,0}=B_{0,0}=N^3$. Note that the two families have an overlap: $A_{0,t}=B_{0,t}$ for any $t$, and here we have assumed that the Lie algebra is unimodular, which is a necessary condition for the corresponding Lie group to admit a compact quotient. Other than $N^3$, these $A_{s,t}$ or $B_{z,t}$ are not nilpotent, instead they are all $3$-step solvable, which means that
the underlying real Lie algebra $\mathfrak{g}$ satisfies
\[\mathfrak{g}'=[\mathfrak{g},\mathfrak{g}] \neq 0,\quad \mathfrak{g}''=[\mathfrak{g}',\mathfrak{g}'] \neq 0,\quad \mathfrak{g}'''=[\mathfrak{g}'',\mathfrak{g}'']=0.\]
With the exception of $A_{-t,t}$, all others are not of Calabi-Yau type, namely, there is no non-trivial invariant holomorphic $3$-form.

Let ${\mathfrak g}$ be a Lie algebra of real dimension $2n$. Let $J$ be a complex structure on ${\mathfrak g}$, namely, a linear isomorphism satisfying $J^2=-id$ and the integrability condition
$$ [x,y]-[Jx,Jy] +J[Jx,y] + J[x,y] =0, \ \ \ \ \ \forall \ x,y \in {\mathfrak g}. \ \ \ \ \ $$
Let $g=\langle \cdot , \cdot \rangle$ be a metric (inner product) on ${\mathfrak g}$ compatible with $J$, that is, $\langle Jx,Jy\rangle =\langle x,y\rangle$, $\forall$  $x,y\in {\mathfrak g}$. We will call $({\mathfrak g}, J,g)$ a {\em Hermitian Lie algebra} (or Lie algebra equipped with a {\em Hermitian structure}). It corresponds to Lie groups equipped with a left-invariant complex structure and a compatible left-invariant metric. Write ${\mathfrak g}^{1,0}=\{ x-\sqrt{-1}Jx \mid x\in {\mathfrak g}\}$. A {\em unitary frame} $e$ of ${\mathfrak g}$ is a unitary basis of ${\mathfrak g}^{1,0}$. Its dual coframe is a basis $\varphi$ of the dual vector space $({\mathfrak g}^{1,0})^{\ast}$ satisfying $\varphi_i(e_j)=\delta_{ij}$ for all $1\leq i,j\leq n$. Following \cite{VYZ} and \cite{ChenZ25}, let us denote the structure constants by
$$ C^j_{ik} = \varphi_j( [e_i, e_k]), \ \ \ \ \ D^j_{ik} = \overline{\varphi}_i ( [\overline{e}_j, e_k]), \ \ \ \ \ 1\leq i,j,k\leq n. $$
Then under a unitary frame $e$ and its dual coframe $\varphi$, the structure equation becomes
\begin{equation} \label{structureLie}
d\varphi_i = -\frac{1}{2}\sum_{j,k} C^i_{jk} \,\varphi_j \wedge \varphi_k - \sum_{j,k} \overline{D^j_{ik}} \,\varphi_j \wedge \overline{\varphi}_k, \ \ \ \ \ \ \  1\leq i\leq n,
\end{equation}
or equivalently,
\begin{equation*} \label{strucgtureLie2}
[e_i,e_j]=\sum_k C^k_{ij}\,e_k, \ \ \ \ [e_i,\overline{e}_j] = \sum_k \big( \overline{D^i_{kj}} \,e_k - D^j_{ki} \,\overline{e}_k \big) , \ \ \ \ \ \ \ 1\leq i,j\leq n.
\end{equation*}
The Chern torsion components are given by
\begin{equation*} \label{torsionLie}
T^j_{ik} = - C^j_{ik} - D^j_{ik} + D^j_{ki},
\end{equation*}
while the entries of the Chern connection matrix $\theta$ under $e$ are
\begin{equation*} \label{ChernconnectionLie}
\theta_{ij} = \sum_k \big( D^j_{ik}\varphi_k - \overline{ D^i_{jk} } \overline{\varphi}_k \big) .
\end{equation*}
The unimodular condition for the Lie algebra ${\mathfrak g}$, meaning $\mbox{tr}(\mbox{ad}_x)=0$\, $\forall$ $x\in {\mathfrak g}$,
is characterized by
\begin{equation} \label{unimodular}
 {\mathfrak g} \,\mbox{ is unimodular} \ \Longleftrightarrow \ \sum_k (C^k_{ki}+D^k_{ki}) =0, \ \ \ \forall \ 1\leq i\leq n.
\end{equation}

Now suppose that ${\mathfrak g}$ is a real $6$-dimensional, unimodular Lie algebra, equipped with a Hermitian structure $(J,g)$ such that it is balanced BTP of middle type. Then we can choose a unitary frame $e$ on ${\mathfrak g}$ such that it is admissible. Denote by $\varphi$ its dual coframe. Write $\theta_{11}=\alpha -\overline{\alpha}$, $\theta_{22}=\beta -\overline{\beta}$, where
$$ \alpha = \sum_i x_i\varphi_i, \ \ \ \ \ \beta = \sum_i y_i\varphi_i. $$
From the structure equations (\ref{eq:9.2c}) and (\ref{structureLie}), we know that the only possibly non-zero components of $C$ and $D$ are
\begin{align*} \label{CandD}
C^1_{12}&=-x_2, &  C^1_{13}&=-x_3, & C^2_{12}&=y_1, & C^2_{23}&=-y_3,\\
D^1_{31}&=a,& D^2_{32}&=-a, & D^1_{1i}&=x_i, & D^2_{2i}&=y_i,
\end{align*}
for $1\leq i\leq 3$. Since ${\mathfrak g}$ is unimodular, by (\ref{unimodular}) for $i=1$ and $2$ we get $x_1=0$ and $y_2=0$. On the other hand, by (\ref{eq:9.2c}) and (\ref{eq:9.3b}) we have
$$ R^b_{1\bar{1}1\bar{1}} \varphi_{1\bar{1}} + 2a^2 \varphi_{2\bar{2}}  = \Theta^b_{11}= d\theta^b_{11}=d(\theta_{11}+a(\varphi_3-\overline{\varphi}_3)) = d\theta_{11} + 2a^2 (\varphi_{2\bar{2}}-\varphi_{1\bar{1}}). $$
Therefore,
$$ \lambda_1 \varphi_{1\bar{1}} =  (R^b_{1\bar{1}1\bar{1}}  + 2a^2)\varphi_{1\bar{1}} = d\theta_{11} = d(\alpha -\overline{\alpha}) =\partial \alpha + (\overline{\partial}\alpha - \partial \overline{\alpha} ) - \overline{\partial} \overline{\alpha} . $$
Hence $\partial \alpha =0$. By (\ref{eq:9.2c}) we have
$$ 0 = \partial \alpha =  \partial (x_2 \varphi_2 +x_3\varphi_3) = x_2 \varphi_2 \beta = x_2\varphi_2 (y_1\varphi_1 + y_3\varphi_3). $$
Thus $x_2y_1=x_2y_3=0$, which gives us
$$ \overline{\partial}\alpha = x_2 \overline{\partial}\varphi_2 + x_3 d\varphi_3 = - x_2 \varphi_2 \overline{\beta} + x_3 d\varphi_3
= ax_3 (\varphi_{2\bar{2}} - \varphi_{1\bar{1}}). $$
By $\overline{\partial}\alpha-\partial \overline{\alpha} = \lambda_1 \varphi_{1\bar{1}}$, we get $\lambda_1=x_3+\overline{x}_3=0$. By the same way, we conclude that  $y_1x_2=y_1x_3=0$ and $\lambda_2 = y_3+\overline{y}_3=0$. In summary, we have proved that
\begin{equation} \label{eq:xy}
x_1=y_2=0, \ \ \ x_2y_1=x_2y_3=0, \ \ \ y_1x_2=y_1x_3=0, \ \ \ \lambda_1=\lambda_2= x_3+\overline{x}_3=y_3+\overline{y}_3=0.
\end{equation}
Also, the (first) Chern (or Bismut) Ricci form is identically zero, and the only non-zero Bismut curvature components are $R^b_{1\bar{1}1\bar{1}}=R^b_{2\bar{2}2\bar{2}}=-2a^2$ and $R^b_{1\bar{1}2\bar{2}}=R^b_{2\bar{2}1\bar{1}}=2a^2$. To solve (\ref{eq:xy}), let us divide our discussion into the following two cases.


{\bf Case 1.} When $x_2=y_1=0$. Write $x_3=\sqrt{-1}s$, $y_3=\sqrt{-1}t$, where $s,t\in {\mathbb R}$. The Hermitian Lie algebra $({\mathfrak g}, J,g)$ has structure equation:
\begin{equation} \label{Ast}
A_{s,t}: \ \ \ \left\{ \begin{array}{lll} d\varphi_1 = \sqrt{-1}s \,\varphi_1\wedge (\varphi_3 + \overline{\varphi}_3), \\
d\varphi_2 = \sqrt{-1}t \,\varphi_2\wedge (\varphi_3 + \overline{\varphi}_3), \\
d\varphi_3 = a(\varphi_{2\bar{2}} - \varphi_{1\bar{1}} ).  \end{array} \right.
\end{equation}

{\bf Case 2.} When $y_1\neq 0$. Then $x_2=x_3=0$ by (\ref{eq:xy}). Still write $y_3=\sqrt{-1}t$ for $t\in {\mathbb R}$, and write $y_1=z\in {\mathbb C}$, then the Hermitian Lie algebra $({\mathfrak g}, g,J)$ becomes:
\begin{equation} \label{Bzt}
B_{z,t}: \ \ \ \left\{ \begin{array}{lll} d\varphi_1 = 0, \\
d\varphi_2 = \varphi_2 \wedge (z\varphi_1 - \overline{z}\,\overline{\varphi}_1) +\sqrt{-1}t \,\varphi_2\wedge (\varphi_3 + \overline{\varphi}_3), \\
d\varphi_3 = a (\varphi_{2\bar{2}} - \varphi_{1\bar{1}} ).  \end{array} \right.
\end{equation}

If $x_2\neq 0$, then $y_1=y_3=0$ and we are in the situation which is isomorphic to the above one,  via the transformation $\varphi_1 \mapsto \varphi_2$, $\varphi_2\mapsto \varphi_1$, $\varphi_3 \mapsto -\varphi_3$. In summary, we have the following:

\begin{proposition} \label{LieHermitian}
Let ${\mathfrak g}$ be a real $6$-dimensional unimodular Lie algebra equipped with a Hermitian structure $(J,g)$. If $g$ is balanced BTP of middle type, then it is either $A_{s,t}$ given by (\ref{Ast}), or $B_{z,t}$ given by (\ref{Bzt}), where $t,s\in {\mathbb R}$ and $z\in {\mathbb C}$, and $a>0$ is a positive number. In both cases the Bismut (or Chern) Ricci form vanishes, and the non-zero Bismut curvature components are $R^b_{1\bar{1}1\bar{1}}=R^b_{2\bar{2}2\bar{2}}=-2a^2$ and $R^b_{1\bar{1}2\bar{2}}=R^b_{2\bar{2}1\bar{1}}=2a^2$.

Furthermore, $A_{0,0}=B_{0,0}=N^3$ given by (\ref{nil}), and $A_{0,t}=B_{0,t}$ for any $t$. For any $(s,t)\neq (0,0)$ and $(z,t)\neq (0,0)$, both $A_{s,t}$ and $B_{z,t}$ are not nilpotent but $3$-step solvable. Finally, $A_{s,t}$ admits an invariant global holomorphic $3$-form (namely, of Calabi-Yau type) if and only if $s+t=0$, and $B_{z,t}$ is of Calabi-Yau type if and only if $(z,t)=(0,0)$.
\end{proposition}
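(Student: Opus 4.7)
The plan is to choose an admissible frame on $\mathfrak{g}$ and translate everything into the structure constants $C^j_{ik}$ and $D^j_{ik}$. Writing $\theta_{11}=\alpha-\overline{\alpha}$ and $\theta_{22}=\beta-\overline{\beta}$ with $\alpha=\sum_i x_i\varphi_i$, $\beta=\sum_i y_i\varphi_i$, one reads off from the diagonal form of $\theta$ in \eqref{eq:9.1b} and from the structure equation \eqref{structureLie} that the only possibly non-zero $C^j_{ik}$, $D^j_{ik}$ are the ones listed before \eqref{eq:xy}. So the Hermitian Lie algebra is determined by the two $1$-forms $\alpha$ and $\beta$ together with the positive constant $a$, and the task reduces to determining which $\alpha,\beta$ are compatible with balanced BTP middle type plus unimodularity.

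Next I would impose the three constraints. Unimodularity via \eqref{unimodular} forces $x_1=y_2=0$. For the BTP constraint, note that combining \eqref{eq:9.3b} with $\theta^b_{11}=\theta_{11}+a(\varphi_3-\overline{\varphi}_3)$ and the structure equation for $\varphi_3$ gives
\[
\lambda_1\,\varphi_{1\bar 1}\;=\;d\theta_{11}\;=\;\partial\alpha+(\overline{\partial}\alpha-\partial\overline{\alpha})-\overline{\partial}\overline{\alpha}.
\]
The $(2,0)$-part forces $\partial\alpha=0$, and expanding $\partial\alpha=x_2\varphi_2\wedge\beta$ yields $x_2y_1=x_2y_3=0$; the $(1,1)$-part then gives $\lambda_1=x_3+\overline{x}_3=0$, i.e.\ $x_3$ is purely imaginary. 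The symmetric argument with $\beta$ produces $y_1x_2=y_1x_3=0$ and $y_3$ purely imaginary, which is the content of \eqref{eq:xy}. Splitting on whether $y_1=0$ (taking $x_2=0$ by the swap $\varphi_1\leftrightarrow\varphi_2$, $\varphi_3\mapsto-\varphi_3$) versus $y_1\neq0$ then produces exactly the two families $A_{s,t}$ and $B_{z,t}$.

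For the remaining assertions, I would work purely with the structure equations \eqref{Ast} and \eqref{Bzt}. The identifications $A_{0,0}=B_{0,0}=N^3$ and $A_{0,t}=B_{0,t}$ are immediate from \eqref{nil}. For solvability/non-nilpotency I would compute $\mathfrak{g}'=[\mathfrak{g},\mathfrak{g}]$, $\mathfrak{g}''$, $\mathfrak{g}'''$ using the brackets encoded by $C$ and $D$: one checks $\mathfrak{g}'''=0$ while iterated adjoint action of a generator with $s,t$ or $z,t$ non-zero fails to be nilpotent, giving exactly $3$-step solvable. The Bismut curvature values $R^b_{1\bar11\bar1}=R^b_{2\bar22\bar2}=-2a^2$ come by re-reading $d\theta_{11}$ and $d\theta_{22}$ with $\lambda_1=\lambda_2=0$: both equal $-2a^2\varphi_{1\bar 1}+2a^2\varphi_{2\bar 2}$ up to adding $\pm 2a^2(\varphi_{2\bar 2}-\varphi_{1\bar 1})$ from the $\gamma$-correction. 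Finally, for the Calabi-Yau condition, write $\Psi=\varphi_1\wedge\varphi_2\wedge\varphi_3$ and compute $d\Psi=-\mathrm{tr}(\theta)\wedge\Psi+(\text{torsion term})$; invariance of $\Psi$ (it is already left-invariant by construction) means $\Psi$ is holomorphic iff $\overline{\partial}\Psi=0$, which by a direct expansion of $\overline{\partial}\varphi_1,\overline{\partial}\varphi_2,\overline{\partial}\varphi_3$ reduces to $\sqrt{-1}(s+t)=0$ in the $A$-family and to $(z,t)=(0,0)$ in the $B$-family.

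The only real obstacle is the bookkeeping in the second paragraph, namely extracting $x_2y_1=x_2y_3=0$ cleanly from $\partial\alpha=0$ and being careful that the swap $\varphi_1\leftrightarrow\varphi_2,\,\varphi_3\mapsto-\varphi_3$ genuinely preserves the admissible frame condition, so that the $x_2\neq 0$ sub-case is covered by the $y_1\neq 0$ sub-case. Everything else is direct algebraic verification using the structure equations.
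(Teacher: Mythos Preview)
Your proposal is correct and follows essentially the same route as the paper: deriving \eqref{eq:xy} from $\partial\alpha=0$ and the $(1,1)$-part of $d\theta_{11}$, splitting on $y_1$ (with the swap handling $x_2\neq 0$), and identifying the Calabi-Yau condition via $\mathrm{tr}(\theta)=0$, i.e.\ $\alpha+\beta=0$. One minor correction: there is no torsion term in $d\Psi=-\mathrm{tr}(\theta)\wedge\Psi$, since each $\tau_i$ is a $(2,0)$-form and wedges to zero against the remaining $\varphi_j$'s.
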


\begin{proof}
The only thing we need to clarify here is when $A_{s,t}$ or $B_{z,t}$ will be of Calabi-Yau type. In this case any compact quotient of the corresponding Lie group by a discrete subgroup will have trivial canonical line bundle. Given a Hermitian Lie algebra $({\mathfrak g},J,g)$, let $e$ be a unitary frame and $\varphi$ its dual coframe. Write $\Phi = \varphi_1\wedge \varphi_2 \wedge \cdots \wedge \varphi_n$. Then we have
$$ d\Phi = -\mbox{tr} (\theta) \wedge \Phi , $$
where $\theta $ is the matrix of the Chern connection under $e$. So ${\mathfrak g}$ admits an invariant holomorphic $n$-form iff $\overline{\partial} \Phi =0$ iff $\mbox{tr}(\theta)=0$. For our $6$-dimensional Lie algebra which is balanced BTP of middle type, this means $\alpha +\beta =0$, or equivalently, $y_1=x_2=x_3+y_3=0$. So $A_{s,t}$ will be of Calabi-Yau type iff $s+t=0$, while $B_{z,t}$ is never of Calabi-Yau type when $(z,t)\neq (0,0)$.
\end{proof}

\begin{remark}
Note that the conclusion above is only build upon the Lie algebra level. Although in general it is a challenging problem to determine whether a given solvable Lie group contains a uniform lattice, we can still briefly discuss the underlying real Lie algebra for the two families $A_{s,t}$ and $B_{z,t}$ with the existence of uniform lattices in the corresponding simply-connected Lie groups below.
\end{remark}

Let us write $\varphi_1=\phi_1+\sqrt{-1}\phi_2$,  $\varphi_2=\phi_3+\sqrt{-1}\phi_4$, $\varphi_3=\phi_5+\sqrt{-1}\phi_6$ into real and imaginary parts. Then the structure equations (\ref{Ast}) and (\ref{Bzt}) become
\begin{eqnarray}
&& A_{s,t}: \ \ \ \left\{ \begin{array}{lll} d\phi_1 = -2s \,\phi_2\wedge \phi_5, \ \ \ \ d\phi_2 = 2s \,\phi_1\wedge \phi_5, \\
d\phi_3 = -2t \,\phi_4\wedge \phi_5, \ \ \ \ d\phi_4 = 2t \,\phi_3\wedge \phi_5, \\
d\phi_5 = 0, \ \ \ \ d\phi_6 = 2a \,\phi_1\wedge \phi_2 - 2a\, \phi_3\wedge \phi_4,  \end{array} \right.  \label{Ast2} \\
&&
B_{z,t}: \ \ \ \left\{ \begin{array}{llll} d\phi_1 = d\phi_2 =  d\phi_5 = 0, \\
 d\phi_3 = 2v\, \phi_1\wedge \phi_4 + 2u \,\phi_2\wedge \phi_4 -  2t \,\phi_4\wedge \phi_5, \\
 d\phi_4 = -2v\, \phi_1\wedge \phi_3 - 2u \,\phi_2\wedge \phi_3 + 2t \,\phi_3\wedge \phi_5, \\
d\phi_6 = 2a \,\phi_1\wedge \phi_2 - 2a\, \phi_3\wedge \phi_4.  \end{array} \right.  \label{Bzt2}
\end{eqnarray}
Here $z=u+\sqrt{-1}v$ for $u,v \in \mathbb{R}$.
Denote by $\{ \varepsilon_1, \ldots ,  \varepsilon_6\}$ the real basis of the Lie algebra dual to $\phi$, then the non-trivial Lie brackets for $A_{s,t}$ are given by
\begin{eqnarray}
&&  -[\varepsilon_1, \varepsilon_2] = [\varepsilon_3, \varepsilon_4] = 2a\, \varepsilon_6,  \nonumber \\
&& [\varepsilon_1, \varepsilon_5] = -2s\, \varepsilon_2 , \ \ \ \ \ [\varepsilon_2, \varepsilon_5] = 2s\, \varepsilon_1, \\
&& [\varepsilon_3, \varepsilon_5] = -2t\, \varepsilon_4 , \ \ \ \ \ [\varepsilon_4, \varepsilon_5] = 2t\, \varepsilon_3. \nonumber
\end{eqnarray}
Similarly, the the non-trivial Lie brackets for $B_{z,t}$ are given by 
\begin{eqnarray}
&&  -[\varepsilon_1, \varepsilon_2] = [\varepsilon_3, \varepsilon_4] = 2a\, \varepsilon_6,  \nonumber \\
&& [\varepsilon_1, \varepsilon_3] = 2v\, \varepsilon_4 , \ \ \ \ \ [\varepsilon_1, \varepsilon_4] = -2v\, \varepsilon_3, \\
&& [\varepsilon_2, \varepsilon_3] = 2u\, \varepsilon_4 , \ \ \ \ \ [\varepsilon_2, \varepsilon_4] = -2u\, \varepsilon_3. \nonumber \\
&& [\varepsilon_5, \varepsilon_3] = 2t\, \varepsilon_4 , \ \ \ \ \ [\varepsilon_5, \varepsilon_4] = -2t\, \varepsilon_3. \nonumber
\end{eqnarray}
Now let us analyse the underlying Lie algebra of $A_{s,t}$ and $B_{z,t}$ for $(s,t) \neq (0,0)$ and $(z,t) \neq (0,0)$ up to isomorphism.
It is easy to verify that, for any $t\neq 0$ and $s \neq 0$, $B_{z,t} \cong B_{0,t} \cong A_{0,t} \cong A_{0,1}
\cong {\mathfrak a} \cong A_{1,0} \cong A_{s,0}$ as isomorphic Lie algebras, for any $z\neq 0$, $B_{z,0} \cong B_{1,0} \cong {\mathfrak b}$,
and for the remaining cases, that is, $st\neq 0$, $A_{s,t} \cong {\mathfrak c}_s$, where the non-trivial Lie brackets for Lie algebras ${\mathfrak a}$, ${\mathfrak b}$ and ${\mathfrak c}_s$ for $s\neq 0$ are given by $-[\varepsilon_1, \varepsilon_2] = [\varepsilon_3, \varepsilon_4] = \varepsilon_6$ and the following equations respectively
\begin{eqnarray}
{\mathfrak a}: &&  [\varepsilon_3, \varepsilon_5] = -\varepsilon_4, \ \ \ [\varepsilon_4, \varepsilon_5] =  \varepsilon_3;   \nonumber \\
{\mathfrak b}: &&  [\varepsilon_2, \varepsilon_3] = \varepsilon_4, \ \ \ [\varepsilon_2, \varepsilon_4] =  -\varepsilon_3;   \nonumber \\
{\mathfrak c}_s: &&  [\varepsilon_1, \varepsilon_5] = -s\, \varepsilon_2, \ \ \ [\varepsilon_2, \varepsilon_5] =  s\, \varepsilon_1, \ \ \  [\varepsilon_3, \varepsilon_5] = - \varepsilon_4 , \ \ \  [\varepsilon_4, \varepsilon_5] =  \varepsilon_3. \nonumber
\end{eqnarray}
The commutators of ${\mathfrak a}$ and ${\mathfrak b}$ are both $3$-dimensional, which are given by ${\mathfrak a}' = {\mathfrak b}'={\mathbb R}\{ \varepsilon_3, \varepsilon_4, \varepsilon_6\} $, but their centers are of different dimensions, that is, ${\mathfrak z}({\mathfrak a})={\mathbb R}\{ \varepsilon_6\}$ while ${\mathfrak z}({\mathfrak b})={\mathbb R}\{ \varepsilon_5, \varepsilon_6\}$. In the mean time, the commutator $\mathfrak{c}_s'$ of ${\mathfrak c}_s$ and the commutator of $\mathfrak{c}_s'$ are given by
$$ {\mathfrak c}_s' = {\mathbb R}\{ \varepsilon_1,  \varepsilon_2, \varepsilon_3,  \varepsilon_4,\varepsilon_6\},\quad
 {\mathfrak c}_s'' =[{\mathfrak c}_s',{\mathfrak c}_s']= {\mathbb R}\{\varepsilon_6\},$$
where the former is clearly $5$-dimensional. Therefore ${\mathfrak a}$, ${\mathfrak b}$, and ${\mathfrak c}_s$ are different Lie algebras. 
For the isomorphic classes of $\mathfrak{c}_s$, we have the following

\begin{lemma}
For any $s,s'\neq0$, the Lie algebras ${\mathfrak c}_s \cong {\mathfrak c}_{s'}$ if and only if $s=s'$ or $s'=\frac{1}{s}$.
\end{lemma}

\begin{proof}
Let $\{\varepsilon_i\}_{i=1}^6$ be the `standard' basis of the Lie algebra $\mathfrak{c}_s$ described in the above paragraphes. Consider a new basis $\{\tilde{\varepsilon}_i\}_{i=1}^6$ of $\mathfrak{c}_s$ given by
\[\tilde{\varepsilon}_1=\varepsilon_3,\quad \tilde{\varepsilon}_2=\varepsilon_4,\quad \tilde{\varepsilon}_3=\varepsilon_1,\quad \tilde{\varepsilon}_4=\varepsilon_2,\quad
\tilde{\varepsilon}_5=\frac{\varepsilon_5}{s},\quad \tilde{\varepsilon}_6=-\varepsilon_6.\]
It is not difficult to see that this new basis is a `standard' basis for the Lie algebra $\mathfrak{c}_{\frac{1}{s}}$. So ${\mathfrak c}_s$ and $\mathfrak{c}_{\frac{1}{s}}$ are isomorphic.
Conversely, suppose that $f:{\mathfrak c}_s \rightarrow {\mathfrak c}_{s'}$ is an isomorphism with $s \neq s'$, which can be formulated as
\[f\begin{pmatrix} \varepsilon_1 \\ \varepsilon_2 \\ \varepsilon_3 \\ \varepsilon_4 \\ \varepsilon_6 \\ \varepsilon_5 \end{pmatrix}
=\begin{pmatrix} a_{11} & a_{12} & a_{13} & a_{14} & b_1 & 0\\
                 a_{21} & a_{22} & a_{23} & a_{24} & b_2 & 0\\
                 a_{31} & a_{32} & a_{33} & a_{34} & b_3 & 0\\
                 a_{41} & a_{42} & a_{43} & a_{44} & b_4 & 0\\
                    0    &     0   &     0   &     0   &  c  & 0 \\
                 d_{1}  & d_{2}  & d_{3}  & d_{4}  & d_6 & d\\
                 \end{pmatrix}
\begin{pmatrix} \varepsilon'_1 \\ \varepsilon'_2 \\ \varepsilon'_3 \\ \varepsilon'_4 \\ \varepsilon'_6 \\ \varepsilon'_5 \end{pmatrix},\]
where $\det(a_{ij})_{1\leq i,j \leq 4} \neq 0$, $c \neq 0$ and $d \neq 0$. The block pattern for the matrix above is due to the fact that  $f({\mathfrak c}_s') = {\mathfrak c}_{s'}'$ and $f({\mathfrak c}_s'') = {\mathfrak c}_{s'}''$. As $f$ preserves the Lie bracket, we have
$-[f(\varepsilon_1),f(\varepsilon_2)]=[f(\varepsilon_3),f(\varepsilon_4)]=f(\varepsilon_6)$, which implies
\begin{equation}\label{bck_12346}
c= \begin{vmatrix} a_{11} & a_{12} \\ a_{21} & a_{22} \\ \end{vmatrix} - \begin{vmatrix} a_{13} & a_{14} \\ a_{23} & a_{24} \end{vmatrix}
=\begin{vmatrix} a_{33} & a_{34} \\ a_{43} & a_{44} \\ \end{vmatrix} - \begin{vmatrix} a_{31} & a_{32} \\ a_{41} & a_{42} \end{vmatrix}.
\end{equation}
By $[f(\varepsilon_1),f(\varepsilon_5)]=-s f(\varepsilon_2)$ and $[f(\varepsilon_2),f(\varepsilon_5)]=sf(\varepsilon_1)$, we get the equalities (I):
\begin{align*}
a_{21}&=-\frac{a_{12}ds'}{s},& a_{22}&=\frac{a_{11}ds'}{s},& a_{23}&=-\frac{a_{14}d}{s}, & a_{24}&=\frac{a_{13}d}{s},
& b_2&=\frac{1}{s}\left(\begin{vmatrix}a_{11} & a_{12} \\ d_1 & d_2 \end{vmatrix}-\begin{vmatrix}a_{13} & a_{14} \\ d_3 & d_4 \end{vmatrix}\right),\\
a_{11}&=\frac{a_{22}ds'}{s},& a_{12}&=-\frac{a_{21}ds'}{s}, & a_{13}&=\frac{a_{24}d}{s},& a_{14}&=-\frac{a_{23}d}{s},
& b_1&=\frac{1}{s}\left(\begin{vmatrix}a_{23} & a_{24} \\ d_3 & d_4 \end{vmatrix}-\begin{vmatrix}a_{21} & a_{22} \\ d_1 & d_2 \end{vmatrix}\right).
\end{align*}
Similarly, by $[f(\varepsilon_3),f(\varepsilon_5)]=- f(\varepsilon_4)$ and $[f(\varepsilon_4),f(\varepsilon_5)]=f(\varepsilon_3)$ we get
the equalities (II):
\begin{align*}
a_{41}&=-a_{32}ds',& a_{42}&=a_{31}ds',& a_{43}&=-a_{34}d, & a_{44}&=a_{33}d,
& b_4&=\begin{vmatrix}a_{31} & a_{32} \\ d_1 & d_2 \end{vmatrix}-\begin{vmatrix}a_{33} & a_{34} \\ d_3 & d_4 \end{vmatrix},\\
a_{31}&=a_{42}ds',& a_{32}&=-a_{41}ds', & a_{33}&=a_{44}d,& a_{34}&=-a_{43}d,
& b_3&=\begin{vmatrix}a_{43} & a_{44} \\ d_3 & d_4 \end{vmatrix}-\begin{vmatrix}a_{41} & a_{42} \\ d_1 & d_2 \end{vmatrix}.
\end{align*}
Furthermore, by $[f(\varepsilon_1),f(\varepsilon_3)]=0$ and $[f(\varepsilon_2),f(\varepsilon_3)]=0$ we obtain
\begin{equation}\label{bck_1323}
\begin{vmatrix}a_{13} & a_{14} \\ a_{33} & a_{34} \end{vmatrix}=\begin{vmatrix}a_{11} & a_{12} \\ a_{31} & a_{32} \end{vmatrix},\quad
\begin{vmatrix}a_{23} & a_{24} \\ a_{33} & a_{34} \end{vmatrix}=\begin{vmatrix}a_{21} & a_{22} \\ a_{31} & a_{32} \end{vmatrix}.
\end{equation}
Similarly, $[f(\varepsilon_1),f(\varepsilon_4)]=0$ and $[f(\varepsilon_2),f(\varepsilon_4)]=0$ are equivalent to
\begin{equation}\label{bck_1424}
\begin{vmatrix}a_{13} & a_{14} \\ a_{43} & a_{44} \end{vmatrix}=\begin{vmatrix}a_{11} & a_{12} \\ a_{41} & a_{42} \end{vmatrix},\quad
\begin{vmatrix}a_{23} & a_{24} \\ a_{43} & a_{44} \end{vmatrix}=\begin{vmatrix}a_{21} & a_{22} \\ a_{41} & a_{42} \end{vmatrix}.
\end{equation}
Note that the first four columns of equalities (I) and (II) give us
\begin{eqnarray}
&& ( \frac{d^2s'^2}{s^2} -1)  \begin{pmatrix}a_{11} & a_{12} \\ a_{21} & a_{22} \end{pmatrix} \ \ =  \ \   (\frac{d^2}{s^2} -1 )  \begin{pmatrix}a_{13} & a_{14} \\ a_{23} & a_{24} \end{pmatrix} \ \ =  \ \  0,   \label{eq:5.19}\\
&& (d^2s'^2 -1)  \begin{pmatrix}a_{31} & a_{32} \\ a_{41} & a_{42} \end{pmatrix} \ \ =  \ \ ( d^2 -1)  \begin{pmatrix}a_{33} & a_{34} \\ a_{43} & a_{44} \end{pmatrix} \ \ =  \ \ 0 ,  \label{eq:5.20} \\
&& \begin{vmatrix} a_{11} & a_{12} \\ a_{21} & a_{22} \\ \end{vmatrix} \ \, = \ \, \frac{ds'}{s}(a_{11}^2 + a_{12}^2), \ \ \ \ \ \begin{vmatrix} a_{13} & a_{14} \\ a_{23} & a_{24} \\ \end{vmatrix} \ \, = \ \, \frac{d}{s}(a_{13}^2 + a_{14}^2), \label{eq:5.21} \\
&& \begin{vmatrix} a_{31} & a_{32} \\ a_{41} & a_{42} \\ \end{vmatrix} \ \, = \ \, ds' (a_{31}^2 + a_{32}^2), \ \ \ \ \ \begin{vmatrix} a_{33} & a_{34} \\ a_{43} & a_{44} \\ \end{vmatrix} \ \, = \ \, d(a_{33}^2 + a_{34}^2).  \label{eq:5.22}
\end{eqnarray}
We will divide the discussion into two cases depending on whether
the matrix $\begin{pmatrix}a_{33} & a_{34} \\ a_{43} & a_{44} \end{pmatrix} $ is zero or not.

\noindent {\bf Case 1.} If $\begin{pmatrix}a_{33} & a_{34} \\ a_{43} & a_{44} \end{pmatrix} \neq 0$.

In this case the second equality of (\ref{eq:5.20}) implies that $d^2=1$
Let us assume $d=1$ as the case $d=-1$ can be argued similarly. Hence $\begin{vmatrix}a_{33} & a_{34} \\ a_{43} & a_{44} \end{vmatrix} = d(a_{33}^2+a_{34}^2)>0$. We make the following claim:
\vspace{0.2cm}

\noindent {\em Claim 1:}  $\begin{pmatrix}a_{13} & a_{14} \\ a_{23} & a_{24} \end{pmatrix} \neq 0$
and $\begin{pmatrix}a_{31} & a_{32} \\ a_{41} & a_{42} \end{pmatrix} \neq 0$.

If on the contrary that $\begin{pmatrix}a_{13} & a_{14} \\ a_{23} & a_{24} \end{pmatrix}=0$. Then we have $
\begin{vmatrix}a_{11} & a_{12} \\ a_{21} & a_{22} \end{vmatrix} \neq 0$ since $\det(a_{ij}) \neq 0$.
By the first equality of (\ref{eq:5.19}) we conclude that $(\frac{ds'}{s})^2=(\frac{s'}{s})^2=1$, hence $s'=-s$ as we have assumed that  $s \neq s'$ at the very beginning. The first equality of (\ref{eq:5.21}) gives us
$$
\begin{vmatrix}a_{11} & a_{12} \\ a_{21} & a_{22} \end{vmatrix}=  -(a_{11}^2+a_{12}^2) <0.
$$
On the other hand, because $a_{13}=a_{14}=a_{23}=a_{24}=0$,  the equalities \eqref{bck_1323} and \eqref{bck_1424} become
\[\begin{vmatrix}a_{11} & a_{12} \\ a_{31} & a_{32} \end{vmatrix}=\begin{vmatrix}a_{21} & a_{22} \\ a_{31} & a_{32} \end{vmatrix}=0,\quad \quad \quad
\begin{vmatrix}a_{11} & a_{12} \\ a_{41} & a_{42} \end{vmatrix}=\begin{vmatrix}a_{21} & a_{22} \\ a_{41} & a_{42} \end{vmatrix}=0.\]
Since vectors $(a_{11},a_{12})$ and $(a_{21},a_{22})$ are linearly independent, we conclude that
$\begin{pmatrix}a_{31} & a_{32} \\ a_{41} & a_{42} \end{pmatrix} =0$. But then
the equality \eqref{bck_12346} would give us
\[c=\begin{vmatrix}a_{11} & a_{12} \\ a_{21} & a_{22} \end{vmatrix}=-(a_{11}^2+a_{12}^2)<0,\quad \quad
c=\begin{vmatrix} a_{33} & a_{34} \\ a_{43} & a_{44} \end{vmatrix}=a_{33}^2+a_{34}^2>0,\]
which is a contradiction. So we must have $\begin{pmatrix}a_{13} & a_{14} \\ a_{23} & a_{24} \end{pmatrix} \neq 0$. Similarly, we must have
 $\begin{pmatrix}a_{31} & a_{32} \\ a_{41} & a_{42} \end{pmatrix} \neq 0$ as well, and Claim 1 is proved.
\vspace{0.2cm}

\noindent {\em Claim 2:}  $\begin{pmatrix}a_{11} & a_{12} \\ a_{21} & a_{22} \end{pmatrix} \neq 0$.

Assume on the contrary that the above matrix is zero. Then by \eqref{bck_1323} and \eqref{bck_1424} we get
$$ \begin{vmatrix} a_{13} & a_{14} \\ a_{33} & a_{34} \\ \end{vmatrix} = \begin{vmatrix} a_{13} & a_{14} \\ a_{43} & a_{44} \\ \end{vmatrix} = \begin{vmatrix} a_{23} & a_{24} \\ a_{33} & a_{34} \\ \end{vmatrix} = \begin{vmatrix} a_{23} & a_{24} \\ a_{43} & a_{44} \\ \end{vmatrix} =0.
$$
Since the vectors $(a_{33},a_{34})$ and $(a_{43},a_{44})$ are linearly independent, we conclude that $\begin{pmatrix}a_{13} & a_{14} \\ a_{23} & a_{24} \end{pmatrix} =0$, contradicting to Claim 1. So we must have  $\begin{pmatrix}a_{11} & a_{12} \\ a_{21} & a_{22} \end{pmatrix} \neq 0$, and Claim 2 is proved.

By the claims and the equalities (\ref{eq:5.19}), (\ref{eq:5.20}), we conclude that $s^2=s'^2=1$. Since $s\neq s'$, we know that $(s,s')$ is either $(1,-1)$ or $(-1,1)$. The equality \eqref{bck_12346} now takes the following form
\[\begin{split}
c &=\begin{vmatrix} a_{11} & a_{12} \\ a_{21} & a_{22} \\ \end{vmatrix} - \begin{vmatrix} a_{13} & a_{14} \\ a_{23} & a_{24} \end{vmatrix}
=-(a^2_{11}+a_{12}^2)-\frac{1}{s}(a^2_{13}+a^2_{14}), \\
c &= \begin{vmatrix} a_{33} & a_{34} \\ a_{43} & a_{44} \\ \end{vmatrix} - \begin{vmatrix} a_{31} & a_{32} \\ a_{41} & a_{42} \end{vmatrix}
=(a_{33}^2 + a_{34}^2) -s'(a_{31}^2+a_{32}^2).
\end{split}\]
If $(s,s')=(1,-1)$, then the above two formula would dictate that $c<0$ and $c>0$ at the same time, which is absurd. So we must have $s=-1$ and $s'=1$.
Next let us write
\[r_1 = \sqrt{a_{11}^2 + a_{12}^2},\quad r_2 = \sqrt{a_{13}^2+a_{14}^2},\quad r_3=\sqrt{a_{31}^2+a_{32}^2},\quad r_4=\sqrt{a_{33}^2+a_{34}^2},\]
where $r_1,r_2,r_3,r_4$ are positive numbers. Then the four blocks of the matrix $(a_{ij})_{1\leq i,j\leq 4}$ can be reformulated as
\begin{align*}
\begin{pmatrix} a_{11} & a_{12} \\ a_{21} & a_{22}\end{pmatrix} &= \begin{pmatrix} a_{11} & a_{12} \\ a_{12} & -a_{11} \end{pmatrix}
=r_1 \begin{pmatrix} p_1 & p_2 \\ p_2 & -p_1 \end{pmatrix}, &
\begin{pmatrix} a_{13} & a_{14} \\ a_{23} & a_{24} \end{pmatrix} &= \begin{pmatrix} a_{13} & a_{14} \\ a_{14} & -a_{13} \end{pmatrix}
=r_2 \begin{pmatrix} p_3 & p_4 \\ p_4 & -p_3 \end{pmatrix}, \\
\begin{pmatrix} a_{31} & a_{32} \\ a_{41} & a_{42} \end{pmatrix} &= \begin{pmatrix} a_{31} & a_{32} \\ -a_{32} & a_{31} \end{pmatrix}
=r_3 \begin{pmatrix} p_5 & p_6 \\ -p_6 & p_5 \end{pmatrix}, &
\begin{pmatrix} a_{33} & a_{34} \\ a_{43} & a_{44} \end{pmatrix} &= \begin{pmatrix} a_{33} & a_{34} \\ -a_{34} & a_{33} \end{pmatrix}
=r_4 \begin{pmatrix} p_7 & p_8 \\ -p_8 & p_7 \end{pmatrix},
\end{align*}
where $p_1^2+p_2^2=p_3^2+p_4^2=p_5^2+p_6^2=p_7^2+p_8^2 =1$. The equality \eqref{bck_1323} then implies that
\[r_1 r_3(p_1 p_6 - p_2 p_5)=r_2 r_4(p_3 p_8 - p_4 p_7 ),\quad r_1 r_3(p_2 p_6 + p_1 p_5)=r_2 r_4(p_4 p_8 + p_3 p_7 ). \]
Square both equalities above and add them up,  we get $r^2_1 r^2_3 = r^2_2 r^2_4$,
which leads to $r_1 r_3 = r_2 r_4$. On the other hand, the equality \eqref{bck_12346} now says that
\[ c=r_2^2 -r_1^2 = r_4^2 -r_3^2.\]
Note that the Lie algebra isomorphism $f$ maps the center to the center and $c$ is the coefficient there, so $c\neq 0$. If $c>0$, then $r_2>r_1>0$ and $r_4>a_3>0$, which would lead to $r_2r_4>r_1r_3$. Similarly when $c<0$ we get $r_2r_4<r_1r_3$. This contradiction shows that our Case 1 cannot occur.

\vspace{0.2cm}

\noindent {\bf Case 2.} If $\begin{pmatrix}a_{33} & a_{34} \\ a_{43} & a_{44} \end{pmatrix} = 0$.

In this case by $\det(a_{ij})\neq 0$ we get $\begin{vmatrix}a_{13} & a_{14} \\ a_{23} & a_{24} \end{vmatrix} \neq 0$ and $
\begin{vmatrix}a_{31} & a_{32} \\ a_{41} & a_{42} \end{vmatrix} \neq 0$.
The equalities (\ref{eq:5.19}) and (\ref{eq:5.20}) imply that  $(ds')^2=1$ and $(\frac{d}{s})^2=1$, that is, $d^2=s^2=\frac{1}{s'^2}$.
The equalities \eqref{bck_1323} and \eqref{bck_1424} now take the following form
\[\begin{vmatrix}a_{11} & a_{12} \\ a_{31} & a_{32} \end{vmatrix}=\begin{vmatrix}a_{21} & a_{22} \\ a_{31} & a_{32} \end{vmatrix}=0,\quad \quad
\begin{vmatrix}a_{11} & a_{12} \\ a_{41} & a_{42} \end{vmatrix}=\begin{vmatrix}a_{21} & a_{22} \\ a_{41} & a_{42} \end{vmatrix}=0.\]
Since the vectors $(a_{31}, a_{32})$ and $(a_{41},a_{42})$ are linearly independent to each other, the above equalities imply that
$\begin{pmatrix}a_{11} & a_{12} \\ a_{21} & a_{22} \end{pmatrix} =0$. Hence by the equality \eqref{bck_12346} we get
\[c=-\begin{vmatrix}a_{13} & a_{14} \\ a_{23} & a_{24} \end{vmatrix} = - \frac{d}{s}(a_{13}^2+a_{14}^2),
\quad c=-\begin{vmatrix}a_{31} & a_{32} \\ a_{41} & a_{42} \end{vmatrix} = -ds'(a_{31}^2+a_{32}^2).\]
This indicates that $\frac{d}{s}$ and $ds'$ have the same sign, which leads to $s'=\frac{1}{s}$. This completes the proof of the lemma.
\end{proof}

Based on the above lemma, we may assume that the parameter $s$ for $\mathfrak{c}_s$ takes values in $[-1,0)\cup (0,1]$.

\begin{remark}
The kernel bundle $L$ of the tensor $B$ is clearly holomorphically trivial over $A_{s,t}$ and $B_{z,t}$. By
the theory developed in the forthcoming subsection \ref{double} and Section \ref{highD},
especially Proposition \ref{propVaisman}, Proposition \ref{propinverse} and Theorem \ref{thm6.3},
it shows that the real Lie algebra $\mathbb{R} \times \mathfrak{h}_5$ of $A_{0,0}=B_{0,0}$ and
 $\mathfrak{a}$, $\mathfrak{b}$,  $\mathfrak{c}_s$ can also serve as the underlying Lie algebras
of Vaisman unimodular Lie algebras of splitting type in complex dimension $3$, named Vaisman companion of $A_{s,t}$ and $B_{z,t}$.
Here $\mathfrak{h}_5$ is the 5-dimensional Lie algebra of the Heisenberg group. It turns out that
\[\mathfrak{a}=\mathbb{R} \ltimes_{D_0} \mathfrak{h}_5\quad \text{and}
\quad \mathfrak{c}_s = \mathbb{R} \ltimes_{D_{-s}} \mathfrak{h}_5,\]
where $s \in [-1,0)\cup (0,1]$, fit into the case (i) of \cite[Proposition 6.6]{AO}, while $\,\mathfrak{b}=\mathbb{R} \times \mathfrak{s}_5\,$
belongs to the case (ii) of the same proposition, where A. Andrada and M. Origlia \cite{AO}
have classified Vaisman unimodular solvable Lie algebras, especially the $6$-dimensional case,
and the definition of the semidirect product above has been given in \cite[the proof of Lemma 2.4]{AO}.
The proposition also shows that the simply-connected Lie groups corresponding to the Lie algebras
$\mathbb{R} \times \mathfrak{h}_5$, $\mathfrak{a}$, $\mathfrak{b}$ and $\mathfrak{c}_s$ for $s \in \mathbb{Q}$
admit uniform lattices.
\end{remark}

\vspace{0.1cm}

\subsection{The double cover and the Vaisman companions }\label{double}
In this subsection, we will investigate the global behavior of compact balanced BTP threefolds of middle type. Let $(M^3,g)$ be such a threefold, and denote by $J$ its complex structure. We have seen that the kernel $L=\ker (B)$ is a holomorphic line bundle satisfying $L^{\otimes 2}\cong {\mathcal O}_{\!M}$. For convenience, let us introduce the following terminology:

\begin{definition}
A compact balanced BTP threefold  of middle type is called a  {\bf primary} one if $L\cong {\mathcal O}_{\!M}$, otherwise it is called a {\bf secondary} threefold.
\end{definition}

\begin{proposition} \label{propsecondary}
A compact balanced BTP threefold $(M^3,g)$ of middle type is primary if and only if its Bismut holonomy group is abelian. In this case $\mbox{Hol}^{\, b}(M)= U(1)\!\times\!U(1)\!\times\!1$, and there are complex (non-holomorphic) line bundles $L_1$, $L_2$ on $M$ such that the tangent bundle is the orthogonal direct sum $L_1\oplus L_2\oplus L$. Any secondary threefold admits a double cover that is a primary one, where its Bismut holonomy group is the following ${\mathbb Z}_2$-extension of $U(1)\!\times\!U(1)\!\times\!1$:
$$ G= \left\{ \begin{bmatrix} \rho_1 & & \\ & \rho_2 & \\ && 1\end{bmatrix},
\begin{bmatrix} 0 & \rho_3  & \\  \rho_4 &0 & \\ && -1\end{bmatrix} \ \Bigg| \ |\rho_1|=|\rho_2|=|\rho_3|=|\rho_4|=1\ \right\} \subset U(3),$$
which is not abelian.
\end{proposition}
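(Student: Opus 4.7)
\medskip

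\noindent\textbf{Proof proposal.} The plan is to pin down $\mbox{Hol}^{\,b}(M)$ by tracking how $\nabla^b$-parallel transport acts on the line $L$. Fix $p\in M$ and an admissible frame $e$ near $p$, and let $\gamma$ be a loop at $p$ with parallel transport $P_\gamma:T_pM\to T_pM$. The block form in \eqref{eq:9.1b} shows $L=\ker B$ is $\nabla^b$-invariant, so $P_\gamma$ preserves the unit circle in $L_p$ and $P_\gamma(e_3(p))=e^{i\theta}e_3(p)$ for some $\theta$. The BTP identity $\nabla^bT=0$ forces $P_\gamma$ to intertwine the Chern torsion tensor, giving
$$e^{i\theta}\,T_p(\cdot,e_3(p))\;=\;T_p(\cdot,P_\gamma e_3(p))\;=\;P_\gamma\,T_p(\cdot,e_3(p))\,P_\gamma^{-1}.$$
Under the admissible frame $T_p(\cdot,e_3(p))$ has spectrum $\{a,-a,0\}$; the right-hand side has the same spectrum by conjugation, while the left-hand side has spectrum $\{e^{i\theta}a,-e^{i\theta}a,0\}$. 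Matching yields $e^{i\theta}=\pm1$. In the $+$ case $P_\gamma$ preserves each torsion eigenspace and acts diagonally on $(L_{1,p},L_{2,p},L_p)$; in the $-$ case it swaps the $\pm a$-eigenspaces. Either way $P_\gamma\in G$, so $\mbox{Hol}^{\,b}(M)\subseteq G$.

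The assignment $\chi:\mbox{Hol}^{\,b}(M)\to\{\pm1\}$ reading off the sign of the $(3,3)$-entry is then a group homomorphism whose kernel contains $\mbox{Hol}^{\,b}_0(M)=U(1)\!\times\!U(1)\!\times\!1$ by Lemma \ref{remark9.2}, hence equals it. Moreover $\chi$ is trivial iff $e_3(p)$ extends to a global $\nabla^b$-parallel unit section of $L$, which by the local holomorphicity of $e_3$ established in the proof of Lemma \ref{claim9.1} is equivalent to producing a nowhere-zero holomorphic section of $L$, i.e., to $L\cong\mathcal{O}_M$. A one-line matrix computation shows that for any swap $\sigma\in G$ and any diagonal $A=\mbox{diag}(\rho_1,\rho_2,1)$ one has $\sigma A\sigma^{-1}=\mbox{diag}(\rho_2,\rho_1,1)$, which differs from $A$ whenever $\rho_1\ne\rho_2$. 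Since the kernel $U(1)\!\times\!U(1)\!\times\!1\subset\mbox{Hol}^{\,b}(M)$ already contains all such diagonals, $\mbox{Hol}^{\,b}(M)$ is abelian iff it contains no swap, iff $\chi$ is trivial. Chaining these equivalences: $(M^3,g)$ primary $\Leftrightarrow$ $L\cong\mathcal{O}_M$ $\Leftrightarrow$ $\chi$ trivial $\Leftrightarrow$ $\mbox{Hol}^{\,b}(M)=U(1)\!\times\!U(1)\!\times\!1$ $\Leftrightarrow$ $\mbox{Hol}^{\,b}(M)$ abelian. In the secondary case $\chi$ is surjective, so at least one swap $\sigma_0\in\mbox{Hol}^{\,b}(M)$; the coset $\sigma_0\cdot(U(1)\!\times\!U(1)\!\times\!1)$ sweeps out every swap in $G$ as $(\rho_1,\rho_2)$ vary, giving $\mbox{Hol}^{\,b}(M)=G$.

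For the remaining structural claims: in the primary case the globally defined $e_3$ allows us to globally identify $L_1$ and $L_2$ as the $\pm a$-eigenspaces of $T(\cdot,e_3)$ inside $L^{\perp}$, producing an orthogonal $\mbox{Hol}^{\,b}$-invariant smooth splitting $T_M=L_1\oplus L_2\oplus L$; the $L_i$ fail to be holomorphic because the entries $\theta_{13}=-a\overline{\varphi}_1$ and $\theta_{23}=a\overline{\varphi}_2$ in \eqref{eq:9.1b} have nontrivial $(0,1)$-parts. In the secondary case, the 2-torsion class of $L$ in $H^1(M,\mathcal{O}_M^*)$, equivalently the nontrivial character $\chi:\pi_1(M)\to\mathbb{Z}_2$, defines an unramified double cover $\pi:\tilde M\to M$ on which $\pi^*L$ acquires a canonical nowhere-zero section; pulling back $g$ and $J$ makes $(\tilde M,\pi^*g)$ a compact balanced BTP threefold of middle type, which is primary by construction. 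The main delicate step is the eigenvalue-matching at the very beginning: it relies crucially on combining the BTP hypothesis $\nabla^bT=0$ with the precise admissible-frame spectrum $\{a,-a,0\}$ of $T_p(\cdot,e_3(p))$, and without either ingredient the sign quantization $e^{i\theta}=\pm1$ would fail.
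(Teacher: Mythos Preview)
Your proof is correct, but the entry point is genuinely different from the paper's. You first prove the uniform containment $\mbox{Hol}^{\,b}(M)\subseteq G$ via the eigenvalue-matching trick for the torsion endomorphism $T(\cdot,e_3(p))$ (using $\nabla^bT=0$ to force $e^{i\theta}=\pm 1$), and then let the sign homomorphism $\chi$ organize both cases at once. The paper never isolates this containment; instead it splits into cases from the start. In the primary case it patches positive-type admissible frames globally (using that $L\cong\mathcal O_M$ lets $e_3$ be chosen coherently), obtaining the global line bundles $L_1,L_2$ and thence $\mbox{Hol}^{\,b}(M)=U(1)\!\times\!U(1)\!\times\!1$. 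In the secondary case it builds the double cover $\pi:\hat M\to M$ first, identifies $\mbox{Hol}^{\,b}(M)$ as generated by $\pi_*\mbox{Hol}^{\,b}(\hat M)\cong U(1)\!\times\!U(1)\!\times\!1$ together with one extra element $x=P_{\pi(\gamma_0)}$ coming from a path joining the two sheets, and shows $x$ is a swap matrix because the deck involution sends positive-type frames to negative-type frames. Your torsion-spectrum argument is cleaner and avoids this casework; the paper's route is more constructive and makes the role of the deck involution explicit. Both rely on Lemma~\ref{remark9.2} to pin down the restricted holonomy and on Lemma~\ref{claim9.1} (more precisely Lemma~\ref{claim9.1b}) for the local rigidity of $e_3$ up to sign.
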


\begin{remark}
The Bismut holonomy group $Hol^b(M)$ above is the global holonomy group of Bismut connection, which should not be confused with
the restrict Bismut holonomy group $Hol_0^b(M)$ in Lemma \ref{remark9.2}. 
\end{remark}

\begin{proof} Let $(M^3,g)$ be a compact balanced BTP threefold of middle type. Fix an admissible frame $e^0$. If $e$ is another admissible frame, then either $e_3=e^0_3$ or $e_3=-e^0_3$. Let us call the former {\em positive type} and the latter {\em negative type}. If $M$ can be covered by a collection of neighborhoods $\{ U_{\alpha} \}_{\alpha \in A}$ such that on each $U_{\alpha}$ there is a positive type admissible frame $e^{\alpha}$, then we can take the union of these $e^{\alpha}_3$ to get a global holomorphic section of $L$, hence $L$ is trivial. Conversely, if $L$ is trivial, then $e^0_3$ extends to a global holomorphic section of $L$ and one can use only positive type admissible frames to cover the entire manifold. For such a frame $e$, note that even though $e_1$, $e_2$ are only defined locally and have the ambiguity of rotating by a function with norm $1$, but ${\mathbb C}e_1$ and ${\mathbb C}e_2$ are well-defined, and become global complex line bundles on $M^3$. Clearly, $T_{\!M}=L_1\oplus L_2\oplus L$ is the orthogonal direct sum and the Bismut connection $\nabla^b$ preserves this splitting, so the holonomy group $\mbox{Hol}^{\,b}(M)$ is contained in $H=U(1)\!\times\!U(1)\!\times\!1$. It actually equals to $H$ since the restricted holonomy group is already equal to $H$.

Next let us assume that $L\not\cong {\mathcal O}_{\!M}$. We want to show that $\mbox{Hol}^{\,b}(M)$ is not abelian in this case.  By  Lemma \ref{claim9.1} we have $L^{\otimes 2}\cong {\mathcal O}_{\!M}$, which defines an unbranched  double cover $\pi : \hat{M} \rightarrow M$ with $\pi^{\ast}L \cong {\mathcal O}_{\!\hat{M}}$. Lift the metric up onto $\hat{M}$ and still denote it by $g$. Then $(\hat{M},g)$ becomes a compact balanced BTP threefold of middle type, and the kernel line bundle $\hat{L}$ of $B$ is equal to $\pi^{\ast}L$ which is trivial. Hence $\hat{M}$ is primary, with $T_{\hat{M}}=\hat{L}_1\oplus \hat{L}_2\oplus \hat{L}$ splitting orthogonally and  $\nabla^b$ preserves the decomposition. So the Bismut holonomy group of $\hat{M}$ is $H$.

$\pi$ induces an injective homomorphism  $\pi_{\ast}: \mbox{Hol}^{\,b}(\hat{M}) \rightarrow \mbox{Hol}^{\,b}(M)$. To be precise, denote by $f$ the involution on $\hat{M}$ which is in the deck transformation group of $\pi$, and fix a base point $p\in \hat{M}$ with $q=f(p)$. If $\gamma$ is a loop from $p$ to $p$, then the image $\pi (\gamma)$ is a loop from $\pi (p)$ to $\pi (p)$, and the $\nabla^b$-parallel transport $P_{\gamma}$ along $\gamma$ is mapped to the $\nabla^b$-parallel transport $P_{\pi (\gamma )}$ along $\pi (\gamma)$. Fix a path $\gamma_0$ from $p$ to $q$, then $x=P_{\pi (\gamma_0)}$ is also an element in $\mbox{Hol}^{\,b}(M)$, and $\mbox{Hol}^{\,b}(M)$ is generated by the subgroup $\pi_{\ast}(\mbox{Hol}^{\,b}(\hat{M}))$ and $x$, so  $\mbox{Hol}^{\,b}(M)$ contains $H$ as a subgroup of index either $1$ or $2$, depending on whether $x$ belongs to $\pi_{\ast}(\mbox{Hol}^{\,b}(\hat{M}))\cong H$ or not. Clearly, $f$ sends positive type admissible frames to negative type admissible frames, otherwise $M$ could be covered by positive type frames so $L\cong {\mathcal O}_{\!M}$ would be trivial, contradicting to our assumption. This means that $x$ is in the form
$$ x = \left[ \begin{array}{ccc} 0 & \rho_3 & \\ \rho_4 & 0 & \\ & & -1 \end{array}\right] . $$
Thus we conclude that $\mbox{Hol}^{\,b}(M)$ is isomorphic to the group $G$, which is the ${\mathbb Z}_2$-extension of $H$ given in the proposition. Note that $G$ is not abelian, and we have completed the proof of the proposition.
\end{proof}

From now on, we will assume that $(M^3,g)$ is a compact balanced BTP threefold of middle type that is {\bf primary}. The holomorphic tangent bundle $T_{\!M}$ decomposes as the orthogonal direct sum $L_1 \oplus L_2 \oplus L$ of complex line bundles. The decomposition does not vary holomorphically on $M$, or equivalently, each $L_i$ is not a holomorphic line bundle. However, if we let $F_i=L_i\oplus L$ for $i=1$, $2$, then we have

\begin{lemma} \label{claim9.5b}
Let $(M^3,g)$ be a compact balanced BTP threefold of middle type that is primary. Then for each $1\leq i\leq 2$, the subbundle $F_i=L_i\oplus L$ is holomorphic and is a foliation.
\end{lemma}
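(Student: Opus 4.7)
The plan is to work locally using an admissible frame $e$. Since $(M^3,g)$ is primary, Proposition \ref{propsecondary} gives a globally defined smooth decomposition $T_M = L_1 \oplus L_2 \oplus L$, and by Lemma \ref{claim9.1} (applied in an admissible frame) the local section $e_3$ of $L$ is holomorphic. Thus both statements are local and can be verified in any neighborhood covered by $e$.

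For the holomorphicity of $F_1 = L_1 \oplus L$, I would show that $\nabla^c_{\overline{X}} Y \in F_1$ for every local section $Y$ of $F_1$ and every $(0,1)$ vector field $\overline{X}$. By the Leibniz rule it suffices to check this on the generators $e_1$ and $e_3$. For $e_3$ it is immediate from holomorphicity. For $e_1$, the formula $\nabla^c_{\overline{X}} e_1 = \sum_k \theta_{1k}(\overline{X})\, e_k$ combined with the explicit shape of $\theta$ in \eqref{eq:9.1b} yields
$$\nabla^c_{\overline{X}} e_1 = \theta_{11}(\overline{X})\, e_1 + \theta_{13}(\overline{X})\, e_3,$$
since $\theta_{12} = 0$. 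This lies in $F_1$. The argument for $F_2$ is identical after swapping the indices $1$ and $2$ (and using $\theta_{21} = 0$).

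For the foliation property, the cleanest route is to verify that the real distribution $F_1 \oplus \overline{F_1}$ is involutive, exactly as in the proof of Lemma \ref{claim9.1} for $L$. There are only a handful of brackets to check: $[e_1, e_3]$ and the mixed brackets $[e_a, \overline{e_b}]$ with $a, b \in \{1,3\}$. Using $[X, Y] = \nabla^c_X Y - \nabla^c_Y X - T(X,Y)$ for $(1,0)$-fields and $[X, \overline{Y}] = \nabla^c_X \overline{Y} - \nabla^c_{\overline{Y}} X$ for mixed pairs (the Chern torsion being of type $(2,0)$), and reading off from \eqref{eq:9.1b}, every $e_2$- and $\overline{e}_2$-component is killed because the only off-diagonal entries of $\theta$ that involve $e_2$ at all are $\theta_{23} = a\overline{\varphi}_2$ and $\theta_{32} = -a\varphi_2$, which annihilate each of $e_1, e_3, \overline{e}_1, \overline{e}_3$. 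For instance, $\nabla^c_{e_1} e_3 = \theta_{31}(e_1)\,e_1 = ae_1$, $\nabla^c_{e_3} e_1 = \theta_{11}(e_3)\,e_1$, and $T(e_1,e_3) = T^1_{13}\,e_1 = ae_1$, giving $[e_1, e_3] = -\theta_{11}(e_3)\, e_1 \in L_1 \subset F_1$, while $[e_3, \overline{e}_3] = 0$ is already known from Lemma \ref{claim9.1}. The argument for $F_2$ is parallel.

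The main obstacle is really just bookkeeping: the content of the lemma is that the admissible-frame conditions $\theta_{12} = \theta_{21} = 0$, together with the specific form of $\theta_{23}$ and $\theta_{32}$, are precisely what force the crucial $e_2$-components to vanish in both the $\overline{\partial}$-closedness of $F_1$ and the brackets generating $F_1 \oplus \overline{F_1}$. Once the matrix structure in \eqref{eq:9.1b} is in hand and the global nature of the decomposition from Proposition \ref{propsecondary} is invoked, both assertions follow by inspection.
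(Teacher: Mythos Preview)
Your proof is correct and follows essentially the same route as the paper: both arguments work in an admissible frame, use the block shape of $\theta$ in \eqref{eq:9.1b} (specifically $\theta_{12}=\theta_{21}=0$ and the fact that $\theta_{32}=-a\varphi_2$ vanishes on $e_1,e_3,\overline{e}_1,\overline{e}_3$) to see that $\nabla^c_{\overline{X}}$ preserves $F_i$, and then check the same handful of Lie brackets to get involutivity of $F_i\oplus\overline{F}_i$. The paper spells out $[e_1,\overline{e}_1]$ explicitly (where the $e_3,\overline{e}_3$ components appear), but your remark that every $e_2$- and $\overline{e}_2$-component is killed is exactly the content needed.
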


\begin{proof}
Let $e$ be an admissible frame on $M$ so that $e_3$ is extended to a global holomorphic section of $L$, and $e_i$ gives a local section of $L_i$ for $i=1$ and $2$. By (\ref{eq:9.1b}), we compute
$$ \nabla^c_{\overline{e}_j} e_1 = \sum_{i=1}^3 \theta_{1i}(\overline{e}_j)e_i = \theta_{11}(\overline{e}_j)e_1 -a \overline{\varphi}_1(\overline{e}_j)e_3 \in F_1, \ \ \ \ \forall \ 1\leq j\leq 3.$$
This means that $F_1$ is holomorphic. Similarly, $F_2$ is also holomorphic. Next, for any $1\leq i,j\leq 3$ we have
\begin{eqnarray*}
 && [e_i, e_j] = \nabla^c_{e_i}e_j -  \nabla^c_{e_j}e_i -T^c(e_i,e_j) = \sum_{k} \big(\theta_{jk}(e_i) - \theta_{ik}(e_j) - T^k_{ij} \big) e_k ,\\
 && [e_i, \overline{e}_j] = \nabla^c_{e_i}\overline{e}_j -  \nabla^c_{\overline{e}_j}e_i -T^c(e_i,\overline{e}_j) = \sum_{k} \overline{\theta_{jk}(\overline{e}_i)} \, \overline{e}_k  - \sum_k \theta_{ik}(\overline{e}_j)  e_k.
 \end{eqnarray*}
From this we get $\,[e_1,e_3]=-\theta_{11}\!(e_3)\,e_1$, $\,[e_3, \overline{e}_3]=0$, $\, [e_1, \overline{e}_3]=-\theta_{11}\!(\overline{e}_3)\,e_1$, and
$$ [e_1, \overline{e}_1] = \overline{\theta_{11}(\overline{e}_1)}\,\overline{e}_1  - \theta_{11}(\overline{e}_1)\,e_1  - a \,\overline{e}_3 + a \,e_3. $$
This shows that $F_1\oplus \overline{F}_1$ is closed under the Lie bracket, so $F_1\subset T_{\!M}$ is a holomorphic foliation. Similarly, so is $F_2$, and the lemma is proved.
\end{proof}

Denote by $\nabla$ the Levi-Civita connection of $(M^3,g)$. Let $e$ be an admissible frame. We have
$$\nabla e = \theta^{(1)}e+\overline{\theta^{(2)}}\,\overline{e}, \ \ \ \mbox{where}  \ \ \theta^{(1)}=\theta^b -\frac{1}{2}\gamma, \ \ \ \theta^{(2)}_{ij} = \frac{1}{2}\sum_k\overline{T^k_{ij}} \varphi_k. $$
Under any admissible frame $e$, by (\ref{eq:9.1b}) we have
\begin{eqnarray} \label{eq:LeviCivitab}
\nabla e_1 & = &  \alpha_1 e_1 - \frac{a}{2}\overline{\varphi}_1 (e_3 - \overline{e}_3), \label{eq:9A}\\
\nabla e_2 & = &  \alpha_2 e_2 + \frac{a}{2}\overline{\varphi}_2 (e_3 - \overline{e}_3), \label{eq:9B} \\
\nabla e_3 & = &  \frac{a}{2} \big( \varphi_1e_1-\varphi_2e_2 \big)
- \frac{a}{2} \big( \overline{\varphi}_1 \overline{e}_1 - \overline{\varphi}_2 \overline{e}_2 \big) , \label{eq:9C}
\end{eqnarray}
where
$$ \alpha_1 \ = \  \theta^b_{11} - \frac{a}{2}(\varphi_3 - \overline{\varphi}_3), \ \ \ \ \ \ \ \ \
\alpha_2 \ = \ \theta^b_{22} + \frac{a}{2}(\varphi_3 - \overline{\varphi}_3).  \ \ \ \ \ \ \  $$
In particular, $\nabla (e_3+\overline{e}_3)=0$, so the universal cover of $(M^3,g)$ splits off a line.

\vspace{0.2cm}

Next we observe that the primary balanced BTP threefold $(M^3,g)$ of middle type admits multiple complex structures compatible with the Riemannian metric $g$, or multiple {\em orthogonal complex structures} (abbreviated as OCS) in the terminology of Salamon \cite{Salamon2}. Fix an admissible frame $e$ on $M$. Since $L$ is trivial,  $e_3$ can be extend  to a global holomorphic section $\sigma$ of $L$ so that $\nabla^b \sigma =0$. From now on, we will only use admissible frames on $M$ that are positive type, meaning that their $e_3$ coincide with $\sigma$, so $e_3$ is uniquely determined, while $e_1$, $e_2$ are only local and can be rotated by a function with norm $1$.  Denote by $J$ the complex structure of $M$. Let us define almost complex structures $J_1$, $J_2$, $J_3$ on $M$ by
\begin{equation} \label{eq:Ji}
J_ie_i=\sqrt{-1}e_i, \ \ \ J_ie_j=-\sqrt{-1}e_j \ \ \ \ \mbox{if} \ \ j\neq i, \ \ \ \ \ 1\leq i,j\leq 3,
\end{equation}
for any positive type admissible frame $e$. Clearly, each $J_i$ is compatible with the metric $g$, satisfying $J\circ J_i=J_i \circ J$ and  $J_i \circ J_j = J_j\circ J_i$ for any $1\leq i,j \leq 3$. There are also the almost complex structures $-J$, $-J_i$ compatible with $g$, which have opposite orientation with $J$. Those $J$ and  $J_i$ generates an abelian group of $16$ elements:
$$ S=\langle J, J_1, J_2, J_3\rangle  =  \{ \pm 1, \,\pm J, \,\pm J_i, \,\pm J\circ J_i \}_{1\leq i\leq 3} \,\cong \,{\mathbb Z}_4  \oplus {\mathbb Z}_2 \oplus {\mathbb Z}_2. $$
We will call $S$ the {\em Salamon group of OCS} for the primary balanced BTP threefold of middle type.

\begin{lemma} \label{claim:int}
Let $(M^3,g)$ be a compact balanced BTP threefold of middle type that is primary. For each $1\leq i\leq 3$, let $J_i$ be the almost complex structure on $M^3$ defined by (\ref{eq:Ji}). Then they are all integrable.
\end{lemma}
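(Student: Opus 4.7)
The plan is to apply the Newlander--Nirenberg theorem in a positive type admissible frame $e$. With the convention $J_ie_i=\sqrt{-1}e_i$ and $J_ie_j=-\sqrt{-1}e_j$ for $j\neq i$, the $(0,1)$-distribution of $J_i$ (inside $T_{\mathbb C}M$) is spanned by $\bar e_i$ together with the two $e_j$ with $j\neq i$; I need to check that this three-dimensional complex distribution is involutive. Global well-definedness of each $J_i$ is not an issue here: since $M^3$ is primary, $e_3$ extends to a global holomorphic section of $L$, and positive type admissible frames are then related only by diagonal unitary gauges in $e_1,e_2$ by Lemma \ref{claim9.1b}, which preserve the sign pattern defining each $J_i$.

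The key computation is just a careful reading of the brackets of frame vectors, using the block-diagonal form of the Chern connection matrix $\theta$ given in (\ref{eq:9.1b}) and the fact that the only nonzero Chern torsion components are $T^1_{13}=-T^2_{23}=a$. From $[e_a,e_b]=\sum_k(\theta_{bk}(e_a)-\theta_{ak}(e_b)-T^k_{ab})e_k$ and the analogous formula for $[e_a,\bar e_b]$, one sees that $[e_1,e_2]\in\mathrm{span}\{e_1,e_2\}$, that $[e_1,e_3]$ and $[e_1,\bar e_3]$ both lie in $\mathrm{span}\{e_1\}$, that $[e_2,e_3]$ and $[e_2,\bar e_3]$ both lie in $\mathrm{span}\{e_2\}$, that $[e_1,\bar e_2]\in\mathrm{span}\{e_1,\bar e_2\}$, and that $[e_3,\bar e_3]=0$. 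Several of these brackets are already carried out in the proof of Lemma \ref{claim9.5b}, and the rest are identical in flavor.

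With this bracket bookkeeping in hand, integrability is then immediate: for $J_1$, the three brackets $[\bar e_1,e_2]$, $[\bar e_1,e_3]$, $[e_2,e_3]$ between $(0,1)$-vectors fall respectively into $\mathrm{span}\{\bar e_1,e_2\}$, $\mathrm{span}\{\bar e_1\}$, $\mathrm{span}\{e_2\}$, all contained in the $(0,1)$-distribution of $J_1$; the verifications for $J_2$ and $J_3$ are identical up to relabeling. I expect the only mildly delicate point to be the packaging, not any serious obstacle: conceptually, the diagonality of $\theta^b$ means $\nabla^b$ preserves each summand of the orthogonal decomposition $T_M=L_1\oplus L_2\oplus L$, so each $J_i$ is $\nabla^b$-parallel, and the Nijenhuis tensor of $J_i$ can be expressed purely in terms of the Bismut torsion, whose extreme sparsity in the admissible frame forces $N_{J_i}$ to vanish.
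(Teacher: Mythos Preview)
Your proposal is correct and is essentially the same argument as the paper's, just packaged via the Newlander--Nirenberg involutivity criterion rather than via the Nijenhuis tensor directly. The paper reduces the vanishing of $N_{J_i}$ to exactly the statements that certain brackets $[e_i,\bar e_k]$, $[e_j,\bar e_i]$ lie in the correct $J_i$-eigenspaces (its equation (\ref{eq:9.11bb})), which is precisely your involutivity check; both routes rest on the same bracket computations coming from the block shape of $\theta$ in (\ref{eq:9.1b}) and the torsion pattern $T^1_{13}=-T^2_{23}=a$.
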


\begin{proof}
Fix an $i\in \{ 1,2,3\}$. To show that $J_i$ is integrable, we just need to verify that its Nijenhuis tensor vanishes, namely,  for any two vector fields $x$ and $y$ on $M$, it holds that
\begin{equation} \label{eq:9int}
 N_i(x,y):= [x,y] - [J_ix,J_iy] + J_i[J_ix,y] + J_i[x,J_iy] =0.
 \end{equation}
Since the equality (\ref{eq:9int}) obviously holds for $x=e_j$ and $y=\overline{e}_j$ for any $j$, it suffices to check it for the case $x=e_j$, $y=e_k$ and the case  $x=e_j$, $y=\overline{e}_k$ for any $1\leq j\neq k\leq 3$.  Recall that our $J_i$ is defined by
$$ J_ie_i=Je_i,\ \ \ J_ie_k =-Je_k \ \ \mbox{when} \ \ k\neq i, \ \ \ 1\leq i,k\leq 3. $$

{\em Case 1.} $x=e_j$, $y=e_k$, $1\leq j\neq  k\leq 3$. First we may ignore the case when $i\not\in \{ j,k\}$, since $J_i$ acts the same way as $-J$ so (\ref{eq:9int}) holds. If $i=j$, then $[J_ie_i,J_ie_k]=[e_i,e_k]$, so the first two terms in $N_i(e_i,e_k)$ cancel each other. Similarly, the last two terms also cancel each other, so (\ref{eq:9int}) holds.

\vspace{0.1cm}

{\em Case 2.} $x=e_j$, $y=\overline{e}_k$, $1\leq j\neq k\leq 3$. Similarly with Case 1, we may ignore the case when $i\not\in \{ j,k\}$,
since $J_i$ acts the same way as $-J$ and we know that $N_J=0$. So we may assume that $i=j\neq k$  or $i=k\neq j$. In the first case, $[J_ie_i, J_i\overline{e}_k] = [\sqrt{-1}e_i, \sqrt{-1}\overline{e}_k]=-[e_i,\overline{e}_k]$, while $[J_ie_i,\overline{e}_k]+[e_i, J_i\overline{e}_k]=2\sqrt{-1}[e_i, \overline{e}_k]$, hence we have
$$ N_i(e_i, \overline{e}_k) = 2[e_i,\overline{e}_k] + 2\sqrt{-1}\,J_i [e_i, \overline{e}_k] . $$
If $i=k\neq j$, then $[J_ie_j, J_i\overline{e}_i] = -[e_j,\overline{e}_i]$, while $[J_ie_j, \overline{e}_i]+[e_j, J_i\overline{e}_i]=-2\sqrt{-1}[e_j, \overline{e}_i]$, thus
$$ N_i(e_j, \overline{e}_i) = 2[e_j,\overline{e}_i] - 2\sqrt{-1}\,J_i [e_j, \overline{e}_i] . $$
So what we need to show is that
\begin{equation} \label{eq:9.11bb}
J_i[e_i, \overline{e}_k]= \sqrt{-1} \,[e_i,\overline{e}_k] \ \ \ \mbox{and} \ \ \  J_i[e_j, \overline{e}_i]= -\sqrt{-1} \,[e_j,\overline{e}_i] ,\ \ \ \ \mbox{whenever} \ \ j,k\neq i.
\end{equation}
We have  $[e_i, \overline{e}_j] =  \nabla^c_{e_i}\overline{e}_j - \nabla^c_{\overline{e}_j}e_i = \sum_{k} \big( \overline{\theta_{jk }(\overline{e}_i) } \,\overline{e}_{k} - \theta_{ik }(\overline{e}_j)\,e_{k} \big) $, hence by (\ref{eq:9.1b}) we obtain
\begin{eqnarray*}
 &&  [e_1, \overline{e}_2] \ =  \ \overline{\theta_{22 }(\overline{e}_1) } \,\overline{e}_{2} - \theta_{11 }(\overline{e}_2)\,e_{1} , \ \ \ \ \  [e_1, \overline{e}_3]\  = \ - \, \theta_{11}(\overline{e}_3)\,e_1, \\
  &&   [e_2, \overline{e}_1] \ =  \ \overline{\theta_{11}(\overline{e}_2) } \,\overline{e}_{1} - \theta_{22 }(\overline{e}_1)\,e_{2} , \ \ \ \ \  [e_2, \overline{e}_3]\  = \ - \, \theta_{22}(\overline{e}_3)\,e_2,  \\
  && [e_3, \overline{e}_1] \ =  \ \overline{\theta_{11 }(\overline{e}_3) } \,\overline{e}_{1}  , \ \ \ \ \hspace{2.0cm}   [e_3, \overline{e}_2]\  = \   \overline{ \theta_{22}(\overline{e}_3)} \,\overline{e}_2.
\end{eqnarray*}
From this and the definition of $J_i$ we see that (\ref{eq:9.11bb}) holds. This completes the proof that each $J_i$ is integrable.
\end{proof}

\begin{proposition} \label{propVaisman}
Let $(M^3,g, J)$ be a compact balanced BTP threefold of middle type that is primary, and $J_i$ be defined by (\ref{eq:Ji}).
For each $I \in \{ \pm J_1, \pm J_2\}$, $(M^3,g,I)$ is a Vaisman threefold and its Bismut connection coincides with that of $(M^3,g,J)$. In particular, the Bismut holonomy group of $(M^3,g,I)$ is abelian and equals to $U(1)\!\times\!U(1)\!\times\!1$.
\end{proposition}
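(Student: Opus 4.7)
The plan is to prove two things in order: first, that the Bismut connection $\nabla^b$ of $(M^3,g,J)$ is also the Bismut connection of $(M^3,g,I)$ (which immediately gives the statement about the Bismut holonomy group via Proposition \ref{propsecondary}); and second, to exhibit a global, closed, Levi-Civita parallel Lee form $\theta_I$ verifying $d\omega_I = \theta_I \wedge \omega_I$. All the analysis will be carried out in a positive type admissible frame $e$, which is defined on all of $M^3$ because the primary hypothesis makes $e_3$ extend to a global holomorphic section of $L$.

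For the first part, the crucial observation is that $\theta^b$ is diagonal under $e$ by \eqref{eq:9.1b}, so $\nabla^b$ preserves each complex line $\mathbb{C} e_i$. Each $I\in\{\pm J_1,\pm J_2\}$ acts on $\mathbb{C} e_i$ as the scalar $\pm\sqrt{-1}$, hence commutes with $\nabla^b$, i.e.\ $\nabla^b I=0$. Combined with $\nabla^b g=0$ and the intrinsic total skew-symmetry of the torsion of $\nabla^b$, the uniqueness of the Bismut connection forces $\nabla^b$ to be the Bismut connection of $(M^3,g,I)$ as well. In particular the two Bismut holonomy groups coincide and equal $U(1)\!\times\!U(1)\!\times\!1$ by Proposition \ref{propsecondary}.

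For the Vaisman property I will treat $I=J_1$ in detail; the other three cases reduce to this one by the evident $e_1\!\leftrightarrow\! e_2$ and orientation-reversing symmetries of the admissible-frame setup. The $J_1$-K\"ahler form in the admissible frame is $\omega_I=\sqrt{-1}(\varphi_{1\bar{1}}-\varphi_{2\bar{2}}-\varphi_{3\bar{3}})$. Since $\theta^b$ is skew-Hermitian, the diagonal Chern entries satisfy $\theta_{ii}+\overline{\theta}_{ii}=0$ for $i=1,2$ (the $\varphi_3-\overline{\varphi}_3$ corrections in \eqref{eq:9.2c} are purely imaginary and cancel), which together with \eqref{eq:9.2c} yields $d\varphi_{1\bar{1}}=d\varphi_{2\bar{2}}=0$ and $d\varphi_{3\bar{3}}=a(\varphi_{2\bar{2}}-\varphi_{1\bar{1}})\wedge(\varphi_3+\overline{\varphi}_3)$. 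Assembling these gives $d\omega_I=\theta_I\wedge\omega_I$ with $\theta_I:=a(\varphi_3+\overline{\varphi}_3)$, a globally defined real $1$-form (global because $e_3$ is), which is closed since $d\varphi_3=-d\overline{\varphi}_3$ by \eqref{eq:9.2c}. The parallelness $\nabla\theta_I=0$ then comes for free from the observation $\nabla(e_3+\overline{e}_3)=0$ noted right below \eqref{eq:9C}: the $g$-dual of the real vector $e_3+\overline{e}_3$ is precisely $\varphi_3+\overline{\varphi}_3=\theta_I/a$, so parallelness of the vector transfers to its metric dual. This establishes Vaisman. I do not foresee a substantive obstacle; the only delicate point is keeping signs straight in $\omega_I$ (and hence in $\theta_I$) as $I$ varies over the four choices $\pm J_1,\pm J_2$.
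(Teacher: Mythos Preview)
Your proof is correct and takes a genuinely different route from the paper's. The paper chooses one representative $I=-J_2$, passes to the unitary frame $\varepsilon=(e_1,\overline{e}_2,e_3)$ for $(g,I)$, and then explicitly computes the Chern connection matrix $\hat{\theta}$, the torsion $\hat{\tau}$, the tensor $\hat{\gamma}$, and finally $\hat{\theta}^b$ in that frame, checking entry-by-entry that $\hat{\nabla}^b=\nabla^b$; it then reads off the non-zero torsion components $\hat{T}^1_{13}=\hat{T}^2_{23}=a$ and invokes \cite[Proposition 1.8]{ZhaoZ24} to conclude Vaisman. Your argument replaces this computation by two conceptual observations: (i) since $\theta^b$ is diagonal in an admissible frame, $\nabla^b$ preserves each $\mathbb{C}e_i$, and $I$ acts as a scalar there, so $\nabla^bI=0$; combined with $\nabla^bg=0$ and the intrinsic skew-symmetry of the Bismut torsion $3$-form, uniqueness of the Bismut connection gives $\hat{\nabla}^b=\nabla^b$ without any matrix computation; (ii) you verify Vaisman directly from the definition by exhibiting the Lee form $\theta_I=a(\varphi_3+\overline{\varphi}_3)$ and recognizing it as the $g$-dual of $a(e_3+\overline{e}_3)$, whose Levi-Civita parallelness the paper had already noted below \eqref{eq:9C}. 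Your approach is shorter, avoids the external citation, and makes transparent why the four choices $\pm J_1,\pm J_2$ behave identically. The paper's explicit computation, on the other hand, yields as a byproduct the precise Chern torsion components of $(M^3,g,I)$ (needed later for curvature comparisons such as $\hat{R}^b_{1\bar{1}2\bar{2}}=-2a^2$), which your argument does not directly produce. One minor imprecision: the admissible frame $e$ is not literally global---only $e_3$ is; $e_1,e_2$ carry local phase ambiguities. This is harmless since every quantity you use ($\varphi_{i\bar i}$, $\omega_I$, $\theta_I$) is invariant under those phase rotations, but it is worth saying explicitly.
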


\begin{proof}
Let $e$ be an admissible frame in $(M^3,g,J)$ with $\varphi$ its dual coframe. Let us take $I=-J_2$ here, as the other cases are analogous. By definition, we have $Ie_1=\sqrt{-1}e_1$, $Ie_2=-\sqrt{-1}e_2$, and $Ie_3=\sqrt{-1}e_3$.
Let
$$\varepsilon_1=e_1, \ \ \ \varepsilon_2 =\overline{e}_2, \ \ \  \varepsilon_3=e_3; \ \ \ \ \mbox{and} \ \ \ \ \psi_1=\varphi_1, \ \ \ \psi_2=\overline{\varphi}_2, \ \ \ \psi_3=\varphi_3. $$
Then $\varepsilon$ is a local unitary frame on $(M^3,g,I)$ with dual coframe $\psi$. By (\ref{eq:9.2b}) we have
$$ d\psi_1=-\theta_{11}\psi_1,\ \ \ d\psi_2=\theta_{22}\psi_2, \ \ \ d\psi_3 = -a (\psi_{1\bar{1}} + \psi_{2\bar{2}}). $$
If we let
$$ \hat{\theta} = \left[ \begin{array}{ccc} \theta_{11} & 0 & -a\overline{\psi}_1 \\ 0 & -\theta_{22} &  -a\overline{\psi}_2 \\ a\psi_1 & a\psi_2 & 0 \end{array} \right], \ \ \ \ \hat{\tau} = \left[ \begin{array}{c} a\psi_1\psi_3 \\ a\psi_2\psi_3  \\ 0  \end{array} \right] ,$$
then we have $d\psi = -\,^t\!\hat{\theta}\wedge \psi + \hat{\tau}$. Since $\hat{\theta}$ is skew-Hermitian and the entries of $\hat{\tau}$ are $(2,0)$-forms in $(M^3,I)$, we know that $\hat{\theta}$ is the matrix of Chern connection for $(M^3,g,I)$ under $\varepsilon$, while $\hat{\tau}$ is the column vector of Chern torsion under $\varepsilon$. In particular, the only non-zero torsion components are $\hat{T}^1_{13}=\hat{T}^2_{23}=a$. Thus the matrices of the $\gamma$ tensor  and Bismut connection of $(M^3,g,I)$ under $\varepsilon$ are
$$ \hat{\gamma} =   a \left[ \begin{array}{ccc} \psi_3 - \overline{\psi}_3  & 0 & \overline{\psi}_1 \\ 0 & \psi_3 - \overline{\psi}_3 &  \overline{\psi}_2 \\ -\psi_1 & -\psi_2 & 0 \end{array} \right] , \ \ \ \ \ \hat{\theta}^b = \hat{\theta}+\hat{\gamma} = \left[ \begin{array}{ccc} \hat{\theta}^b_{11}  & 0 & 0 \\ 0 & \hat{\theta}^b_{22} &  0 \\ 0 & 0 & 0 \end{array} \right],$$
respectively, where
$$\hat{\theta}^b_{11} = \theta_{11} + a\psi_3 -a\overline{\psi}_3 = \theta^b_{11}, \ \ \ \ \ \ \hat{\theta}^b_{22} = -\theta_{22} + a\psi_3 -a\overline{\psi}_3 = - \theta^b_{22}. $$
Denote by $\hat{\nabla}^b$ the Bismut connection of $(M,g,I)$. Then the above calculation shows that $\hat{\nabla}^be_i=\nabla^be_i$ for $i=1$ and $3$, while
$$ \hat{\nabla}^b \overline{e}_2 = \hat{\nabla}^b \varepsilon_2 = \hat{\theta}^b_{22}\varepsilon_2= - \theta^b_{22} \overline{e}_2. $$
Taking complex conjugation and using the fact that $\overline{\theta^b_{22}} =- \theta^b_{22}$, we get $\hat{\nabla}^be_2 =\nabla^be_2$. This means that $\hat{\nabla}^b=\nabla^b$. Clearly, $\hat{\nabla}^b\hat{T}=0$, so $(M^3,g,I)$ is BTP. It is not balanced, as its Gauduchon's torsion $1$-form $\hat{\eta}=2a\psi_3$ which is lined up with $\varepsilon_3$. Since $\hat{T}^1_{23}=\hat{T}^2_{13}=0$ and  $\hat{T}^1_{13}=\hat{T}^2_{23}=a$, by \cite[Proposition 1.8]{ZhaoZ24} we know that $(M^3,g,I)$ is a Vaisman manifold. Since $(M^3,g,I)$ and $(M^3,g,J)$ shares the same metric and Bismut connection, their Bismut holonomy group is the same and equals to $H=U(1)\!\times\!U(1)\!\times\!1$. Alternatively, because the Bismut connection $\hat{\nabla}^b$ preserves the orthogonal decomposition $L_1\oplus L_2\oplus L$ on $M$, the Bismut holonomy group of $(M^3,g,I)$ is contained in $U(1)\!\times\!U(1)\!\times\!1$ thus is abelian, and it actually equals to $U(1)\!\times\!U(1)\!\times\!1$ since one computes that $\hat{R}^b_{1\bar{1}2\bar{2}}=\hat{R}^b_{2\bar{2}1\bar{1}}=-2a^2$ which is non-zero.
\end{proof}

\begin{remark}
It follows similarly from the proof of Lemma \ref{claim9.5b} that, for $i=1,2$, the subbundle $F_i \oplus \overline{F}_i$ is
always closed under Lie bracket on $(M^3,g,I)$ above for each $I \in \{ \pm J_1, \pm J_2\}$.
\end{remark}

Since the metric of the Vaismann manifold $(M^3,g,I)$ is not K\"ahler, we know that the first Betti number $b_1(M)>0$. By the beautiful  theorem of Ornea and Verbitsky \cite{OV03} on the structure of compact Vaisman manifolds, we also get: \emph{$M^3$ is a smooth fiber bundle over the circle $S^1$ with fiber being a compact Sasakian manifold $N^5$.}

Since $(M,g,I)$ and $(M,g,J)$ have the same Bismut connection, their Bismut curvature tensors, which we denote by $\hat{R}^b$ and $R^b$ respectively, are the same, in particular, they have the same sectional curvature. However, since the complex structures are different, the `bisectional curvature' or `holomorphic sectional curvature' are different. Under admissible frames, the non-zero Bismut curvature components are
\begin{equation}
\hat{R}^b_{1\bar{1}1\bar{1}} = R^b_{1\bar{1}1\bar{1}}, \ \ \  \hat{R}^b_{2\bar{2}2\bar{2}}=R^b_{2\bar{2}2\bar{2}}, \ \ \ \hat{R}^b_{1\bar{1}2\bar{2}}=-2a^2, \ \ \  R^b_{1\bar{1}2\bar{2}}=2a^2.
\end{equation}
In particular, $(M^3,g,J)$ is CYT if and only if $R^b_{1\bar{1}1\bar{1}}=R^b_{2\bar{2}2\bar{2}}=-2a^2$, while $(M^3,g,I)$ is CYT if and only if $\hat{R}^b_{1\bar{1}1\bar{1}}=\hat{R}^b_{2\bar{2}2\bar{2}}=2a^2$. This implies that a primary balanced BTP threefold of middle type $(M^3,g,J)$
and its Vaisman companion $(M^3,g,I)$ can never be CYT at the same time.

\begin{definition}
Let $(M^3,g,J)$ be a compact balanced BTP threefold of middle type that is primary. For each $I\in \{ \pm J_1, \pm J_2\}$, we will call the non-K\"ahler Vaisman threefold $(M^3,g,I)$ a {\bf Vaisman companion} of $(M^3,g,J)$.
\end{definition}

At this point, one is naturally curious about the Hermitian threefold $(M^3,g,J_3)$ or $(M^3,g,-J_3)$. It turns out that just like $(M^3,g,-J)$, they are balanced BTP threefold of middle type and share the same Bismut connection with $(M^3,g,J)$. The proof is analogous to that of Proposition \ref{propVaisman}, so we omit it here.

\begin{lemma}
Given a primary compact balanced BTP threefold of middle type $(M^3,g,J)$, the Hermitian threefolds
 $(M^3,g,\pm J_3)$ are also primary balanced BTP threefolds of middle type, whose Bismut connection coincide with that of $(M^3,g,J)$.
\end{lemma}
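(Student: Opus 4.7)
The strategy mirrors the Vaisman-companion proof of Proposition \ref{propVaisman}: I will show that the Bismut connection $\nabla^b$ of $(M^3,g,J)$ already satisfies $\nabla^bJ_3=0$, so by the uniqueness of the Bismut connection it coincides with the Bismut connection of $(M^3,g,\pm J_3)$; the remaining claims (balanced, BTP, middle-type, primary) then follow by reading off the relevant tensors in the transplanted frame.

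First I would fix a positive-type admissible frame $e$ of $(M^3,g,J)$ with dual coframe $\varphi$, and pass to the frame $\varepsilon=(\bar e_1,\bar e_2,e_3)$, which is a $(1,0)_{J_3}$ unitary frame with dual coframe $\psi_1=\bar\varphi_1$, $\psi_2=\bar\varphi_2$, $\psi_3=\varphi_3$ (the case $I=-J_3$ is handled symmetrically with $\varepsilon=(e_1,e_2,\bar e_3)$). The diagonal shape of $\theta^b$ in \eqref{eq:9.1b} tells us that $\nabla^b$ preserves the orthogonal decomposition $L_1\oplus L_2\oplus L$ and acts as a $J$-linear Hermitian connection on each summand. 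Because $J_3$ acts as the scalar $-J$ on $L_1\oplus L_2$ and as $+J$ on $L$, and $\nabla^b$ already commutes with $J$ on each factor, it commutes with $J_3$ as well, giving $\nabla^bJ_3=0$. Combined with the metric condition and the totally skew-symmetric character of the Bismut torsion (both intrinsic to the Riemannian connection and independent of the compatible complex structure), the uniqueness of the Bismut connection identifies $\nabla^b$ with the Bismut connection $\hat\nabla^b$ of $(M^3,g,J_3)$.

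Next I would read off the Chern torsion $\hat T$ for $(M^3,g,J_3)$. Since $\hat\nabla^b=\nabla^b$ and $\hat\nabla^b J_3=0$, $\hat T$ is the $(2,0)_{J_3}$-part of $T^b$ evaluated on $(\varepsilon_i,\varepsilon_j)$. Using the formula $T^b(e_i,\bar e_j)=\sum_k\bigl(\overline{T^i_{jk}}\,e_k-T^j_{ik}\,\bar e_k\bigr)$ implicit in the definition of $\gamma=\nabla^b-\nabla^c$, together with $T^b(e_i,e_j)=T(e_i,e_j)$ and the admissible-frame values $T^1_{13}=-T^2_{23}=a$, one computes
\begin{equation*}
T^b(\varepsilon_1,\varepsilon_2)=0,\qquad T^b(\varepsilon_1,\varepsilon_3)=-a\varepsilon_1,\qquad T^b(\varepsilon_2,\varepsilon_3)=a\varepsilon_2,
\end{equation*}
so the only nonzero components of $\hat T$ are $\hat T^1_{13}=-a$ and $\hat T^2_{23}=a$. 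Interchanging $\varepsilon_1\leftrightarrow\varepsilon_2$ turns $\varepsilon$ into an admissible frame for $(M^3,g,J_3)$ in the sense of Definition \ref{def9.2}, confirming that $\hat B$ has rank $2$ with $\ker\hat B$ spanned by $e_3$; since $e_3$ is a global nowhere-vanishing section by the primariness of $(M^3,g,J)$, the kernel line bundle of $\hat B$ is trivial and $(M^3,g,J_3)$ is primary. BTP follows at once from $\hat\nabla^b\hat T=\nabla^bT^{b,(2,0)_{J_3}}=0$, using $\nabla^bJ_3=0$ and $\nabla^bT^b=0$.

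The balanced property will be a short computation with the transplanted structure equations \eqref{eq:9.2c}: $d\psi_i=\theta_{ii}\wedge\psi_i$ for $i=1,2$ and $d\psi_3=a(\psi_{1\bar1}-\psi_{2\bar2})$; since $\bar\theta_{ii}=-\theta_{ii}$ one gets $d\psi_{i\bar i}=0$ for $i=1,2$ and $d\psi_{3\bar3}=a(\psi_{1\bar1}-\psi_{2\bar2})\wedge(\psi_3+\bar\psi_3)$, whence the expansion of $d(\hat\omega^2)$ cancels out using $\psi_{1\bar1}\wedge\psi_{1\bar1}=\psi_{2\bar2}\wedge\psi_{2\bar2}=0$. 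The main obstacle in the entire argument is the bookkeeping in extracting $\hat T$ from the mixed Bismut-torsion formula and reconciling signs so that the resulting $\psi$-frame is truly admissible; once $\nabla^bJ_3=0$ is in place, however, the rest is routine.
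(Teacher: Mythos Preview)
Your argument is correct and runs parallel to the computation the paper defers to (Proposition~\ref{propVaisman}), with one genuine shortcut: instead of computing $\hat\theta$, $\hat\gamma$, and $\hat\theta^b$ explicitly in the $\psi$-frame and checking entry by entry that $\hat\theta^b$ agrees with $\theta^b$, you observe directly from the diagonal form of $\theta^b$ in \eqref{eq:9.1b} that $\nabla^b$ preserves each $L_i$ and hence commutes with $J_3$, then invoke the uniqueness of the Hermitian connection with totally skew-symmetric torsion. This is cleaner and in fact treats all the $J_i$ at once. The remaining steps---extracting $\hat T$ from $T^b(\varepsilon_i,\varepsilon_j)$, swapping $\varepsilon_1\leftrightarrow\varepsilon_2$ to obtain an admissible frame, and checking balancedness via $d(\hat\omega^2)=0$---are equivalent to what the explicit route yields from $\hat\tau$ and $\hat\eta=0$; the only caution, which you already flag, is the sign bookkeeping in the mixed-type Bismut torsion formula.
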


The converse to Proposition \ref{propVaisman} also holds, namely, any compact, non-K\"ahler Vaisman threefold with abelian Bismut holonomy group must be a Vaisman companion of some balanced BTP threefold of middle type that is primary.

\begin{proposition} \label{propinverse}
Let $(M^3,g,I)$ be a compact, non-K\"ahler Vaisman threefold such that its Bismut holonomy group is abelian. Then there exists another complex structure $J$ on $M^3$ compatible with $g$, such that $(M^3,g,J)$ is primary balanced BTP of middle type, and $(M^3,g,I)$ is a Vaisman companion of  $(M^3,g,J)$.
\end{proposition}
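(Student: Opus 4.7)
The plan is to reverse the construction carried out in the proof of Proposition \ref{propVaisman}. By \cite[Proposition 1.7]{ZhaoZ24}, the non-K\"ahler Vaisman threefold $(M^3,g,I)$ admits local admissible frames $\varepsilon$ in which the only non-zero Chern torsion components are $\hat{T}^1_{13}=\hat{T}^2_{23}=a>0$, and $\varepsilon_3$ coincides with the globally defined holomorphic lift of the Lee vector field, spanning a $\nabla^b$-parallel holomorphic line bundle $L\subset T^{1,0}_I M$. The hypothesis that $\mbox{Hol}^{\,b}(M)$ is abelian forces $\mbox{Hol}^{\,b}(M)\subseteq U(1)\!\times\!U(1)\!\times\!1$, so the orthogonal complement $L^{\perp}\subset T^{1,0}_I M$ splits orthogonally as $L_1\oplus L_2$ into a pair of $\nabla^b$-parallel complex line subbundles. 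After passing, if necessary, to a double cover to absorb a possible ${\mathbb Z}_2$-monodromy permuting the two factors, I may assume this ordered splitting is globally defined on $M$.

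Next, I would define a new almost complex structure $J$ on $M$ by declaring
\[T^{1,0}_J M \,:=\, L_1\oplus\overline{L_2}\oplus L\,\subset\,TM\otimes{\mathbb C}.\]
Since $L_1$, $L_2$, $L$ are mutually $g$-orthogonal complex lines and the right-hand side meets its complex conjugate trivially, $J$ is a well-defined almost complex structure compatible with $g$. Setting $e_1=\varepsilon_1$, $e_2=\overline{\varepsilon_2}$, $e_3=\varepsilon_3$ gives a local candidate unitary $(1,0)$-frame for $(M^3,g,J)$, with dual coframe $\varphi_1=\psi_1$, $\varphi_2=\overline{\psi_2}$, $\varphi_3=\psi_3$, where $\psi$ is the coframe dual to $\varepsilon$. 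Integrability of $J$ reduces, via Newlander-Nirenberg, to showing that the $(0,2)_J$-part of $d\varphi_i$ vanishes, which is the mirror of the computation done in Proposition \ref{propVaisman} run in reverse using the Vaisman structure equations for $(M^3,g,I)$ under $\varepsilon$.

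Once $J$ is integrable, I would compute the Chern data of $(M^3,g,J)$ under $e$ by transforming the Chern connection matrix $\hat\theta$ and torsion column $\hat\tau$ of $(M^3,g,I)$ via the change $\varepsilon\leadsto e$ (essentially interchanging the second row/column with its complex conjugate). The outcome is a torsion vector whose only non-zero components are $T^1_{13}=-T^2_{23}=a$, so $e$ is admissible in the sense of Definition \ref{def9.2} and $(M^3,g,J)$ is balanced of middle type, with $\ker B=L$. The same computation yields $\theta^b_{11}=\hat\theta^b_{11}$ and $\theta^b_{22}=-\hat\theta^b_{22}$, from which $\nabla^b=\hat\nabla^b$ follows; since $\hat\nabla^b\hat T=0$, one obtains $\nabla^b T=0$, so $(M^3,g,J)$ is BTP. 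Because $e_3=\varepsilon_3$ is a globally defined holomorphic section trivializing $L$, the threefold $(M^3,g,J)$ is primary, and $I$ differs from $J$ by the Salamon element $-J_2$, so $(M^3,g,I)$ is the Vaisman companion of $(M^3,g,J)$.

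The main obstacle will be the global set-up: producing an unambiguous ordered $\nabla^b$-parallel splitting $L^{\perp}=L_1\oplus L_2$ on all of $M$, rather than merely up to the $U(1)\!\times\!U(1)$-swap, and checking that any residual choices are canonical up to the Salamon group action, so that the conclusion of the proposition is insensitive to them. Once this geometric set-up is in place, the remaining content is a symmetric rerun of the calculations already performed in the proof of Proposition \ref{propVaisman}, with the roles of $I$ and $J$ interchanged.
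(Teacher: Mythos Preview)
Your approach is essentially the same as the paper's: diagonalize the abelian Bismut holonomy to obtain a $\nabla^b$-parallel splitting $T^{1,0}_I M = L_1\oplus L_2\oplus L$, then conjugate $L_2$ to define $J$ and rerun the computations of Proposition \ref{propVaisman} in reverse. The integrability check (via the $(0,2)_J$-part of $d\varphi_i$ or, equivalently, the Nijenhuis tensor) and the identification $\hat{\nabla}^b=\nabla^b$ are exactly what the paper does by reference to Lemma \ref{claim:int} and Proposition \ref{propVaisman}.

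There is one point where your write-up is shakier than it needs to be. You suggest possibly passing to a double cover to absorb a ${\mathbb Z}_2$-monodromy permuting $L_1$ and $L_2$, and you flag the global ordered splitting as ``the main obstacle.'' But no cover is needed, and if one were, the proposition as stated (producing $J$ on $M$ itself) would not follow. The point is that you are assuming the \emph{global} holonomy group $\hat{H}=\mbox{Hol}^{\,b}(M)\subset U(2)\times 1$ is abelian. Any abelian subgroup of $U(2)$ is conjugate to a subgroup of the diagonal torus; performing this conjugation at a base point and parallel-transporting yields line subbundles $L_1,L_2$ that are automatically preserved by \emph{every} element of $\hat{H}$, hence by parallel transport along \emph{all} loops, contractible or not. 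So the ordered splitting $L_1\oplus L_2\oplus L$ is globally well-defined on $M$, and the rest of your argument goes through without modification. (There remains a harmless binary choice of which factor to call $L_1$; as you note, the two choices are related by the Salamon group and both yield valid $J$'s.)
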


\begin{proof}
Let $(M^3,g,I)$ be a compact, non-K\"ahler Vaisman threefold. It is a non-balanced BTP manifold. Denote by $\hat{\nabla}^b$, $\hat{T}$, and $\hat{H}$ the Bismut connection, Chern torsion, and Bismut holonomy group of $(M^3,g,I)$, respectively. Then by \cite[Proposition 1.7]{ZhaoZ24}, we know that there always exist admissible frames, namely a local unitary frame $\epsilon $ in $M^3$ such that
$\nabla^b\epsilon_3=0$ with $\epsilon_3$ a global holomorphic vector field of constant norm, which indicates $\hat{H} \subset U(2)\times 1$, and the only non-zero torsion components are $\hat{T}^1_{13}=\hat{T}^2_{23}=a>0$. Write $\hat{H}=G\!\times\!1$ with $G\subset U(2)$. Since $\hat{H}$ is abelian, $G$ is an abelian subgroup of $U(2)$ hence is conjugate to a diagonal subgroup. This means that there exists a unitary change $\{ \varepsilon_1, \varepsilon_2\}$ of $\{ \epsilon_1, \epsilon_2\}$ so that $\hat{H}$ preserves the splitting ${\mathbb C}\varepsilon_1 \oplus {\mathbb C}\varepsilon_2$. Write $\varepsilon_i=\sum_{j=1}^2 U_{ij}\epsilon_j$, $1\leq i\leq 2$, where $U$ is a $U(2)$-valued local function, and let $\varepsilon_3=\epsilon_3$. Then under the new frame $\varepsilon$ the non-zero torsion components are still $\hat{T}^1_{13}=\hat{T}^2_{23}=a$, in other words, $\varepsilon$ is still an admissible frame of $(M^3,g,I)$ in the sense of Definition 1.6 of \cite{ZhaoZ24}, and the holonomy group $\hat{H}$ is contained in (hence equals to) $U(1)\!\times\!U(1)\!\times\!1$, which preserves an orthogonal decomposition of the tangent bundle into the direct sum of complex line subbundles $L_1\oplus L_2\oplus L_3$ where $L_i={\mathbb C}\varepsilon_i$.

Define an almost complex structure $J$ on $M^3$ by letting $J=I$ on $L_1\oplus L_3$ while letting $J=-I$ on $L_2$. In other words, $e$ will be  a local unitary frame for $(M^3,g,J)$ if $e_1=\varepsilon_1$, $e_2=\overline{\varepsilon}_2$ and $e_3=\varepsilon_3$. By analogous deduction as in the proofs of Lemma \ref{claim:int} and Proposition \ref{propVaisman}, we see that $J$ is integrable, and $(M^3,g,J)$ is balanced BTP whose Bismut connection $\nabla^b$ coincides with $\hat{\nabla}^b$. Its Bismut holonomy group is equal to $U(1)\!\times\!U(1)\!\times\!1$, hence it is primary, and $(M^3,g,I)$ is one of its Vaisman companions. This completes the proof of Proposition \ref{propinverse}.
\end{proof}


\subsection{Examples of Vaisman threefolds with abelian Bismut holonomy}\label{Hopf}
In this subsection, let us consider some concrete examples of Vaisman threefold with abelian Bismut holonomy.
The first one is a Vaisman companion of the complex nilmanifold determined by $N^3$, since
the corresponding simply-connected Lie group $G$ admits uniform lattice as shown in Subsection \ref{example}:

\begin{example}
Let $M^3=G/\Gamma $ be a compact quotient of the nilpotent Lie group $G$ by a discrete subgroup $\Gamma \subset G$, where the Lie algebra ${\mathfrak g}$ of $G$ admits a unitary coframe $\varphi$ satisfying the structure equation
$$ d\varphi_1=d\varphi_2=0,\ \ \ \ d\varphi_3= -a (\varphi_{1\bar{1}}+ \varphi_{2\bar{2}}). \hspace{2cm} $$
Here $a>0$ is a constant. It is a Vaisman companion to the balanced BTP nilmanifold determined by $N^3$
with the structure equation $d\varphi_1=d\varphi_2=0$,  $d\varphi_3= -a \varphi_{1\bar{1}}+ a\varphi_{2\bar{2}}$.
\end{example}

For this $M^3$, under the unitary frame $e$ dual to $\varphi$, the structure constants $C^j_{ik}=0$, and the only non-zero $D$ components are $D^1_{31}=D^2_{32}=a$, so the only non-trivial torsion components are $T^1_{13}=T^2_{23}=a$. From this we get the matrices for Chern connection and curvature and the tensor $\gamma = \theta^b-\theta$:
$$ \theta = a\! \left[ \begin{array}{ccc} 0& 0 & -\overline{\varphi}_1 \\ 0& 0 & - \overline{\varphi}_2 \\ \varphi_1 & \varphi_2 & 0 \end{array} \right], \ \ \  \Theta = a^2\! \left[ \begin{array}{ccc} -\varphi_{1\bar{1}} & -\varphi_{2\bar{1}} & 0 \\  -\varphi_{1\bar{2}}  &  -\varphi_{2\bar{2}}   & 0 \\ 0 & 0 & \varphi_{1\bar{1}} \!+\! \varphi_{1\bar{1}} \end{array} \right] , \ \ \ \gamma  = a\! \left[ \begin{array}{ccc} \varphi_3 \!-\!  \overline{\varphi}_3  & 0 & \overline{\varphi}_1 \\ 0 & \varphi_3 \!-\!\overline{\varphi}_3 & \overline{\varphi}_2 \\  -\varphi_1 & -\varphi_2 & 0 \end{array} \right]. $$
Hence the matrices for the Bismut connection and curvature are:
$$ \theta^b = a\! \left[ \begin{array}{ccc} \varphi_3 \!-\! \overline{\varphi}_3 &  &  \\ & \varphi_3 \!-\!  \overline{\varphi}_3  &  \\  &  & 0 \end{array} \right], \ \ \ \ \Theta^b =-2 a^2\! \left[ \begin{array}{ccc} \varphi_{1\bar{1}} \!+\!  \varphi_{2\bar{2}}  &  &  \\ & \varphi_{1\bar{1}} \!+\!  \varphi_{2\bar{2}} &  \\  &  & 0 \end{array} \right],  $$
In particular, $\mbox{tr}(\Theta)=0$ so $M^3$ is Chern Ricci flat, while its (first) Bismut Ricci form is
$$ \sqrt{-1} \mbox{tr}(\Theta^b) = -4a^2 \sqrt{-1} \big( \varphi_{1\bar{1}} +  \varphi_{2\bar{2}}  \big) . $$
In particular, the Vaisman threefold $M^3$ is not CYT.


Similarly, by taking Vaisman companion of the solvmanifold determined by $A_{s,t}$ or $B_{z,t}$, when the corresponding simply-connected Lie groups have uniform lattices (when the parameter is rational), we get examples of compact Vaisman threefolds with abelian Bismut holonomy. See \cite{AO} for a more detailed discussion on such Vaisman manifolds.

\vspace{0.4cm}

\section{Generalization to higher dimensions}\label{highD}

At present time, we do not know how to approach the classification problem for compact balanced BTP manifolds in dimension $4$ or higher, despite our belief that such manifolds should form a highly restrictive special class.  However, one could presumably  at least try to generalize the three types of such threefolds, namely the Chern flat case, the Fano case, and the middle type ones. In other words, one could ask smaller questions such as:

\begin{question}\begin{enumerate}
\item What kind of compact Chern flat manifolds  are  BTP?
\item What kind of Fano manifolds can admit balanced but non-K\"ahler BTP metrics?
\item What are the high dimensional generalizations of balanced BTP threefolds of middle type?
\end{enumerate}
\end{question}

For part (1), note that compact Chern flat manifolds are always balanced. The recent work \cite{PodestaZ} gives a satisfactory  answer to (1). By the classic result of Boothby \cite{Boothby}, any compact Chern flat manifold $(M^n,g)$ is a quotient of a complex Lie group $G$ equipped with a left-invariant metric $\tilde{g}$ which is compatible with the complex structure of $G$, and $\tilde{g}$ is the lift of  $g$.  Theorem 1.2 in \cite{PodestaZ} states that, if $g$ is BTP, then $G$ must be reductive, and in fact it is the orthogonal direct product
$ G={\mathbb C}^k \times G_1 \times \cdots \times G_r$
where each $G_i$ is a simple complex Lie group. Conversely,  any reductive complex Lie group $G$ admits a left-invariant metric which is BTP (and is balanced and Chern flat).

For part (2), not much is known except a couple of partial results from \cite{PodestaZ}.  Note that the flag threefold is $X=SU(3)/T^2$. Theorem 1.4 of \cite{PodestaZ} generalizes the Wallach threefold case to higher dimensions: for any $k\geq 2$, the metric $g$ naturally induced by the Killing form on full flag $X^n = SU(k+1)/T^k$ is (balanced, non-K\"ahler) BTP, and any non-K\"ahler BTP metric on $X^n$ is a constant multiple of $g$. Here $n=\frac{1}{2}k(k+1)$ is the complex dimension. Theorem 1.3 of \cite{PodestaZ} says the same thing holds if one replaces the type A full flag by any K\"ahler C-space of the form $Y=K/H$ where $K$ is a compact simple group and the isotropy representation of $H$ has at most two direct summands. Table 1 of \cite{PodestaZ} listed all such K\"ahler C-spaces. The Killing metric $g$ is K\"ahler when and only when $Y$ is a compact Hermitian symmetric space. For all others in Table 1, we get examples of non-K\"ahler, balanced BTP manifolds.

Now let us focus on part (3). From our previous discussion, we know that primary balanced BTP threefolds of middle type have Vaisman companions, and these Vaisman threefolds have abelian Bismut holonomy groups, which forces a splitting on the tangent bundle and the splitting is preserved by the Bismut connection.

To be more precise, let  $(M^3,g,J)$ be a primary balanced BTP threefold of middle type, then as we have seen before, it has a  Vaisman companion $(M^3,g,I)$, which is a particular type of Vaisman threefold. The specialty of $(M^3,g,I)$ lies in the fact that its Bismut holonomy group $\mbox{Hol}^b(M)$ is $U(1)\!\times\!U(1)\!\times\!1$, or equivalently, the holomorphic tangent bundle $T_M$ of $(M^3,g,I)$ is the orthogonal direct sum $L_1\oplus L_2\oplus L$ of complex line bundles and the decomposition is preserved by the Bismut connection $\nabla^b$. For a given (non-K\"ahler) Vaisman $n$-manifold, denote by $\eta$ the Gauduchon torsion $1$-form, which is the global $(1,0)$-form on $M^n$ defined by $\partial (\omega^{n-1}) = -\eta \wedge \omega^{n-1}$. Denote by $\chi$ the vector field dual to $\eta$, namely, $\langle Y, \overline{\chi}\rangle =\eta(Y)$ for any type $(1,0)$ vector $Y$. It is easy to verify that $\chi$ is a holomorphic vector field of constant length, which also holds for general non-balanced BTP manifold as shown in \cite[Proposition 1.7]{ZhaoZ24}. The above discussion motivates us to propose the following:

\begin{definition}
An odd dimensional Vaisman manifold $(M^{2m+1},g,I)$ is said to be {\em of splitting type,} if there exist rank $m$ complex subbundles $L_1$, $L_2$ of the holomorphic tangent bundle $T_M$, such that $T_M=L_1\oplus L_2\oplus L$ is the orthogonal direct sum, and the Bismut connection preserves the decomposition. Here $L={\mathbb C}\chi $ is the holomorphic line bundle generated by the holomorphic vector field $\chi$ dual to Gauduchon's torsion $1$-form $\eta$.
\end{definition}

Equivalently speaking, a Vaisman manifold $M^{2m+1}$ is of splitting type if the Bismut holonomy group is contained in $U(m)\!\times\!U(m)\!\times\!1$. When $m>1$, this does not mean that the Bismut holonomy group must be abelian. Also, a compact Vaisman manifold $M$ is always a metric fiber bundle over $S^1$ with fiber being a Sasakian manifold $N$. $M$ being of splitting type means that $N$ is $(4m+1)$-dimensional and satisfies a particular condition. As we have seen in the $m=1$ case and below, $N$ will admit two orthogonal complex structures $I$ and $J$ on the orthogonal complement of its Reeb vector field, satisfying $IJ=JI$, so it is a sort  of `bi-Hermitian' structure. It would be an interesting problem itself to analyze or classify this special type of Sasakian manifolds.

Mimic the $3$-dimensional case, and we have the following:

\begin{theorem} \label{thm6.3}
Let $(M^{2m+1},g,I)$ be a compact Vaisman manifold of splitting type. Define an almost complex structure $J$ on $M$ by letting $J=I$ on $L_1\oplus L$ and $J=-I$ on $L_2$. Then $J$ is integrable, and the Hermitian manifold $(M^{2m+1},g,J)$ is balanced BTP. Furthermore, the Bismut connection of these two Hermitian manifolds coincide.
\end{theorem}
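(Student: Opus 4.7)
The plan follows the pattern of Proposition \ref{propinverse} and its proof. First I would fix a local unitary frame $\epsilon$ of $(M^{2m+1},g,I)$ adapted to the splitting $T_M=L_1\oplus L_2\oplus L$: let $\{\epsilon_1,\ldots,\epsilon_m\}$ span $L_1$, $\{\epsilon_{m+1},\ldots,\epsilon_{2m}\}$ span $L_2$, and $\epsilon_{2m+1}$ span $L$. Since $\nabla^b$ preserves the splitting, the Bismut connection matrix $\hat{\theta}^b$ under $\epsilon$ is block-diagonal with diagonal blocks of sizes $m$, $m$, $1$. By the admissible-frame machinery (Proposition 1.7 and Proposition 1.8 of \cite{ZhaoZ24}) applied inside the two $m\times m$ blocks via a block-unitary gauge transformation (which preserves block-diagonality of $\hat{\theta}^b$), I may further arrange that the only nonzero Chern torsion components of $(M,g,I)$ are $\hat{T}^i_{i,\,2m+1}=a>0$ for $1\le i\le 2m$, for some positive constant $a$. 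A consequence is that the contorsion tensor $\hat{\gamma}_{ij}=\sum_k(\hat{T}^j_{ik}\varphi_k-\overline{\hat{T}^i_{jk}}\,\overline{\varphi}_k)$ only couples $L_1$ or $L_2$ with $L$, never $L_1$ with $L_2$.

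Next I introduce the frame adapted to $J$: set $e_i=\epsilon_i$ for $1\le i\le m$ and $i=2m+1$, and $e_i=\overline{\epsilon}_i$ for $m+1\le i\le 2m$, with dual coframe $\psi$ obtained analogously. Integrability of $J$ is verified by checking the vanishing of the Nijenhuis tensor on all pairs $(e_i,e_j)$ and $(e_i,\overline{e}_j)$. When both indices lie in a single block the identity is trivial since $J=\pm I$ there; the cross-block cases, analogous to Case 1 and Case 2 of Lemma \ref{claim:int}, reduce to the structural observation that $\hat{\theta}^b$ is block-diagonal and $\hat{\gamma}$ couples $L_1,L_2$ only to $L$, so the mixed commutators have the required types.

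With $J$ integrable, I translate the Chern structure equations into equations for $d\psi_i$, then read off the Chern torsion $T^J$ of $(M,g,J)$ under $e$. One expects $T^{J,\,i}_{i,\,2m+1}=a$ for $1\le i\le m$ and $T^{J,\,i}_{i,\,2m+1}=-a$ for $m<i\le 2m$, with all other components zero; this is the direct higher-dimensional analogue of the sign flip already observed in the proof of Proposition \ref{propVaisman}. Then Gauduchon's torsion $1$-form $\eta^J_k=\sum_i T^{J,\,i}_{ik}$ vanishes identically, since the nontrivial slot $k=2m+1$ contributes $ma-ma=0$, so $g$ is $J$-balanced. A direct verification shows that the modified matrix $\theta^J:=\hat{\theta}^b-\gamma^J$, where $\gamma^J$ is built from $T^J$ by the standard formula, is skew-Hermitian for $g$ and has $(2,0)$-type torsion with respect to $J$; hence it is the Chern connection matrix of $(M,g,J)$ under $e$, and the Bismut connection $\theta^J+\gamma^J=\hat{\theta}^b$ coincides with that of $(M,g,I)$. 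Finally $\nabla^b T^J=0$ is automatic: $T^J$ differs from $\hat{T}$ only by a $\mathbb{Z}_2$ sign flip on $L_2$-indices, and this flip is $\nabla^b$-parallel because $\nabla^b$ preserves each of $L_1,L_2,L$. Together with balancedness this shows that $(M,g,J)$ is balanced BTP.

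The main obstacle is the Nijenhuis verification in the second step. In dimension three (Lemma \ref{claim:int}) it was a short case enumeration, but in the higher-dimensional setting the number of cross-block index combinations grows; the key input that makes all these cases collapse is precisely the splitting-type hypothesis, which forces $\hat{\theta}^b$ to be block-diagonal and prevents $\hat{\gamma}$ from introducing any $L_1\leftrightarrow L_2$ coupling. Once this structural observation is cashed in, the subsequent steps (computation of $T^J$, vanishing of $\eta^J$, identification of $\theta^J$, and coincidence of Bismut connections) are mechanical generalizations of Proposition \ref{propVaisman} and Proposition \ref{propinverse}, and no essentially new phenomenon appears.
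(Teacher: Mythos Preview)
Your proposal is correct and follows essentially the same route as the paper's proof: pick a unitary frame adapted to the splitting, compute the torsion, verify integrability of $J$ via the Nijenhuis tensor using block structure, then read off the Chern data of $(M,g,J)$ and identify the Bismut connections. One simplification you are missing: since $(M,g,I)$ is Vaisman it is locally conformally K\"ahler, so under \emph{any} unitary frame with $\epsilon_{2m+1}$ aligned with $\chi$ the Chern torsion already satisfies $T^j_{ik}=\frac{1}{n-1}(\delta_{ij}\eta_k-\delta_{kj}\eta_i)$, hence $\hat T^i_{i,2m+1}=a$ automatically---no block-unitary gauge or appeal to the admissible-frame machinery of \cite{ZhaoZ24} is needed at this step.
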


\begin{proof} The proof is analogous to the $3$-dimensional case, and we give it here for the sake of completeness. Write $n=2m+1$, and let $\nabla^b$ be the Bismut connection and $\chi$ be the holomorphic vector field on $(M^n,g,I)$ dual to Gauduchon's torsion $1$-form $\eta$. By \cite{AndV}, Vaisman manifolds are BTP, so $\nabla^b \chi=0$. Let $\{ e_1, \ldots , e_n\}$ be a local unitary frame of $(M^n,g,I)$ so that $\lambda e_n=\chi$ where $\lambda =|\eta|>0$ is a global constant,  $\{ e_1, \ldots , e_m\}$ spans $L_1$, and $\{ e_{m\!+\!1}, \ldots , e_{2m}\}$ spans $L_2$. Also let $\varphi$ be the coframe dual to $e$. For convenience, let us denote by $e'$, $e''$ the column vector $\,^t\!(e_1, \ldots , e_m)$ and $\,^t\!(e_{m\!+\!1}, \ldots , e_{2m})$, respectively, and similarly write $\varphi'$, $\varphi''$ for the column vector $\,^t\!(\varphi_1, \ldots , \varphi_m)$ and $\,^t\!(\varphi_{m\!+\!1}, \ldots , \varphi_{2m})$. Since both $L_1$ and $L_2$ are preserved by $\nabla^b$, we have $\nabla^be'=\theta^b_1e'$, $\nabla^be''=\theta^b_1e''$, and $\nabla^be_n=0$ for some $m\times m$ matrices of $1$-forms $\theta^b_1$, $\theta^b_2$, so the Bismut connection matrix under $e$ is block-diagonal:
$$ \theta^b = \left[ \begin{array}{ccc} \theta^b_1 &&  \\  &\theta^b_2 & \\ &&0 \end{array} \right]. $$
On the other hand, since $(M^{n},g,I)$ is locally conformally K\"ahler, its Chern torsion components under any unitary frame $e$ would satisfy
$$ T^j_{ik} = \frac{1}{n-1} \big( \delta_{ij} \eta_k - \delta_{kj} \eta_i \big), \ \ \ \ \ \forall \ 1\leq i,j,k\leq n, $$
where $\eta = \sum_i \eta_i \varphi_i$. By our choice of $e$, we have $\eta_1=\cdots =\eta_{2m}=0$ and $\eta_n=\lambda$, so the only non-zero torsion components are $T^i_{in}=\frac{\lambda}{n-1}=a\,$ for $1\leq i\leq 2m$, and the Chern connection matrix and torsion vector are
\begin{equation} \label{eq:splittheta}
 \theta = \theta^b-\gamma = \left[ \begin{array}{ccc} \theta^b_1 -\alpha I & 0 &  -a\overline{\varphi}' \\  0 &\theta^b_2 -\alpha I & -a\overline{\varphi}''\\ a\,^t\!\varphi' & a\,^t\!\varphi'' &0 \end{array} \right], \ \ \ \ \ \ \tau = a \! \left[ \begin{array}{c} \varphi'   \\  \varphi'' \\ 0 \end{array} \right] \wedge \varphi_n,
 \end{equation}
where $\alpha = a\varphi_n - a\overline{\varphi}_n$. From this, we obtain
\begin{equation} \label{eq:splitstructure}
 d\varphi = -\,^t\!\theta \wedge \varphi + \tau = \left[ \begin{array}{c} -\,^t\!\theta_1 \wedge \varphi'   \\  -\,^t\!\theta_2\wedge \varphi'' \\ -a\,\big( \,^t\!\varphi' \wedge \overline{\varphi}' + \,^t\!\varphi'' \wedge \overline{\varphi}''  \big)  \end{array} \right],
\end{equation}
where $\theta_i = \theta^b_i-\alpha I$ for $i=1$, $2$. By the definition of $J$ on $M^n$, it holds that $Jx=Ix$ for $x\in L_1\oplus L\oplus \overline{L}_1\oplus \overline{L}$ and $Jx=-Ix$ for $x\in L_2\oplus \overline{L}_2$. In order the verify that $J$ is integrable, we need to show that
\begin{equation} \label{eq:splitintegrable}
 [x,y] -[Jx,Jy] + J [Jx,y] + J[x,Jy] =0
 \end{equation}
for any vector fields $x$, $y$ on $M^n$. Note that the same equality has been established for $I$, so when both $x$ and $y$ are in $L_1\oplus L\oplus \overline{L}_1\oplus \overline{L}$, the equality (\ref{eq:splitintegrable}) holds. Similarly, if both $x$ and $y$ are in $L_2\oplus \overline{L}_2$, then (\ref{eq:splitintegrable}) holds as either each term on the left is zero or the plus of the first two and also the plus of the last two are zeros. It remains to check the case for $x\in  L_1\oplus L\oplus \overline{L}_1\oplus \overline{L}$ and $y\in L_2\oplus \overline{L}_2$, in which case (\ref{eq:splitintegrable}) becomes
\begin{equation} \label{eq:splitintegrable2}
 ([x,y] + [Ix,Iy]) + J ([Ix,y] - [x,Iy]) =0 .
 \end{equation}
It suffices to check the case (a): $x=e_i$, $y=e_{\alpha}$ and the case (b):  $x=e_i$, $y=\overline{e}_{\alpha}$, for any $i\in \{ 1, \ldots , m,n\}$ and $\alpha \in \{ m\!+\!1, \ldots , 2m\}$. For case (a), each parenthesis on the left hand side of (\ref{eq:splitintegrable2}) is zero, so the equality holds. For case (b), (\ref{eq:splitintegrable2}) becomes
\begin{equation} \label{eq:splitintegrable3}
2[e_i, \overline{e}_{\alpha}] + 2\sqrt{-1}\,J [ e_i, \overline{e}_{\alpha}]=0.
 \end{equation}
Since $T(e_i, \overline{e}_{\alpha})=0$, so by (\ref{eq:splittheta}) we have
$$ [e_i, \overline{e}_{\alpha}] = \nabla^c_{e_i}\overline{e}_{\alpha} - \nabla^c_{\overline{e}_{\alpha}}e_i = \sum_{\beta =m+1}^{2m}  \overline{\theta_{\alpha \beta} (\overline{e}_i) } \, \overline{e}_{\beta} -  \sum_{k=1}^{m} \theta_{ik} (\overline{e}_{\alpha}) e_k $$
if $1\leq i\leq m$, while
$$ [e_n, \overline{e}_{\alpha}] =  \sum_{\beta =m+1}^{2m}  \overline{\theta_{\alpha \beta} (\overline{e}_n) } \, \overline{e}_{\beta} , $$
as $\theta_{n\ast}$ are all $(1,0)$-forms.
Therefore $[e_i, \overline{e}_{\alpha}] $ is always a linear combination of $\overline{e}_{\beta}$ and $e_k$, for $1\leq k\leq m$ and $m\!+\!1\leq \beta \leq 2m$. Since $J\overline{e}_{\beta}=-I\overline{e}_{\beta}= \sqrt{-1}\overline{e}_{\beta}$ and $Je_k=Ie_k=\sqrt{-1}e_k$, we have $J[e_i, \overline{e}_{\alpha}] = \sqrt{-1}\,[e_i, \overline{e}_{\alpha}]$, hence (\ref{eq:splitintegrable3}) holds. This completes the proof that $J$ is integrable.

Next let us show that the Bismut connection $\hat{\nabla}^b$ of the Hermitian manifold $(M^n,g,J)$ will coincide with the Bismut connection $\nabla^b$ of the original Vaisman manifold $(M^n,g,I)$. Let $\varepsilon_i=e_i$ and $\psi_i =\varphi_i$ for $1\leq i\leq m$ or $i=n$, while $\varepsilon_{\alpha}=\overline{e}_{\alpha}$ and $\psi_{\alpha}=\overline{\varphi}_{\alpha}$ for $m\!+\!1\leq \alpha \leq 2m$. Then $\varepsilon$ becomes a unitary frame for $(M^n,g,J)$ and $\psi$ is its dual coframe. Again write $\psi'=\,^t\!(\psi_1, \ldots , \psi_m)$ and  $\psi''=\,^t\!(\psi_{m\!+\!1}, \ldots , \psi_{2m})$  for the column vectors. By taking the complex conjugation in the middle portion of (\ref{eq:splitstructure}), we obtain
\begin{equation} \label{eq:splitstructure2}
 d\psi =  \left[ \begin{array}{c} -\,^t\!\theta_1 \wedge \psi'   \\  \theta_2\wedge \psi'' \\ a\,\big( \,^t\!\psi'' \wedge \overline{\psi}'' - \,^t\!\psi' \wedge \overline{\psi}'  \big)  \end{array} \right] =-\,^t\!\hat{\theta}\wedge \psi +\hat{\tau},
\end{equation}
where
$$ \hat{\theta} = \left[ \begin{array}{ccc}  \theta_1 & 0 & -a\overline{\psi}'   \\
0 & \overline{\theta}_2 &  a\overline{\psi}'' \\ a\,^t\!\psi'  & -a\,^t\!\psi''  & 0 \end{array} \right]  ,
\ \ \ \ \ \ \ \ \hat{\tau}=a\left[ \begin{array}{c}  \psi'   \\   -\psi'' \\ 0  \end{array} \right] \wedge \psi_n.$$
Since $\hat{\theta}$ is skew-Hermitian, and the entries of $\hat{\tau}$ are all $(2,0)$-forms in $(M^n,g,J)$, we know that $\hat{\theta}$, $\hat{\tau}$ are respectively the Chern connection matrix and torsion column vector under $\varepsilon$. In particular, the only non-zero torsion components of $(M^n,g,J)$ under $\varepsilon$ are $T^i_{in}=a=- T^{\alpha}_{\alpha n}$ for each $1\leq i\leq m$ and each $m\!+\!1\leq \alpha \leq 2m$.  From this, we deduce that the components for Gauduchon's torsion $1$-form are $\hat{\eta}_j = \sum_{k=1}^n T^k_{kj}=0$ for each $j$, hence $\hat{\eta}=0$ and $(M^n,g,J)$ is balanced. Also, the $\gamma$ tensor for $(M^n,g,J)$ has matrix representation
$$ \hat{\gamma} = \hat{\theta}^b -\hat{\theta} =  a\! \left[ \begin{array}{ccc}  (\psi_n-\overline{\psi}_n)I & 0 & \overline{\psi}'   \\  0 & - (\psi_n-\overline{\psi}_n)I  &  -\overline{\psi}'' \\ -\,^t\!\psi'  & \,^t\!\psi''  & 0 \end{array} \right]. $$
Recall that $\theta_1=\theta_1^b-\alpha I$, $\theta_2=\theta_2^b-\alpha I$, where $\alpha =a(\varphi_n-\overline{\varphi}_n)=a(\psi_n-\overline{\psi}_n)$, so from the above equality we get
$$  \hat{\theta}^b = \hat{\theta} + \hat{\gamma} =   \left[ \begin{array}{ccc}  \theta_1^b &  &   \\   &  \overline{\theta}_2^b  &   \\  &  & 0 \end{array} \right]. $$
Hence $\hat{\nabla}^be' = \hat{\nabla}^b \varepsilon' = \hat{\theta}^b_1 \varepsilon' = \theta^b_1\varepsilon ' = \theta^b_1 e' = \nabla^b e'$, $\hat{\nabla}^be_n = \hat{\nabla}^b\varepsilon_n =0 = \nabla^b e_n$, and
$$ \hat{\nabla}^be'' = \hat{\nabla}^b \overline{\varepsilon''} = \overline{ \hat{\theta}^b_2 \varepsilon''}  = \overline{ \overline{ \theta^b_2 } \varepsilon''} =  \theta^b_2 e'' =  \nabla^b e''. $$
Therefore, $\hat{\nabla}^b=\nabla^b$, namely, Hermitian manifolds $(M^n,g,I)$ and $(M^n,g,J)$ share the same Bismut connection, thus the latter is a balanced  BTP manifold. This completes the proof of the theorem.
\end{proof}

\vspace{0.4cm}

\section{Appendix}

In the appendix, we will calculate the Riemannian curvature of the Wallach threefold $(X^3,g)$ discussed in \S \ref{WCH3D}, which shows that the Riemannian sectional curvature is non-negative and the Ricci curvature is constantly equal to $3$.

First let us recall some general formulae from existing literature. Let $e$ be a local unitary frame on a Hermitian manifold $(M^n,g)$. We have
$$ \nabla^c_{\ell}T^j_{ik} - \nabla^b_{\ell}T^j_{ik} = \sum_r \big(T^r_{\ell i}T^j_{kr} + T^r_{k\ell}T^j_{ir} - T^r_{ik}T^j_{r\ell} \big). $$
By \cite[Proposition 2.5]{ZhaoZ24}, we know that $\nabla^b_{1,0}T=0$ implies that the right hand side of the equality above is zero. Therefore  $\nabla^b_{1,0}T=0 $ indicates $ \nabla^c_{1,0}T=0$. It implies if $g$ is BTP, then the $(1,0)$-part of Chern covariant differentiation of the torsion vanishes, that is, $T^j_{ik;\ell}=0$, where the index after the semicolon stands for covariant derivative with respect to the Chern connection. The $(0,1)$-part of Chern covariant differentiation of torsion, on the other hand, is given by
$$ T^j_{ik;\overline{\ell}}= R^c_{k\overline{\ell}i\overline{j}} - R^c_{i\overline{\ell}k\overline{j}} .$$
Denote by $R$ the Riemannian curvature tensor, namely, the curvature of the Levi-Civita connection of $g$.
Note that the symbol $T_{ik}^j$ defined at the beginning of \S \ref{BS3D} is two times of that in \cite{YZ18Cur}.
So by \cite[Lemma 7]{YZ18Cur} and the equality above, we obtain
\begin{eqnarray}
R_{ijk\overline{\ell}} & = & \frac{1}{2}T^{\ell}_{ij;k}
+ \frac{1}{4}\sum_r \big( T^{\ell}_{ri}T^r_{jk} - T^{\ell}_{rj}T^r_{ik} \big)    \label{eq:8.26} \\
R_{k\overline{\ell}i\overline{j}} & = & \frac{1}{2}\big( R^c_{i\overline{\ell}k\overline{j}}  +  R^c_{k\overline{j}i\overline{\ell}}  \big)
+ \frac{1}{4}\sum_r \big( T^r_{ik}\overline{T^r_{j\ell } } - T^j_{kr}\overline{T^i_{\ell r} } - T^{\ell}_{ir}\overline{T^k_{jr } } \big) \label{eq:8.27}
\end{eqnarray}
Then let us specialize to the Wallach threefold $(X,g)$ at the origin $0$. Recall from \S \ref{WCH3D} that $g_{i\overline{j}}=\delta_{ij}$ at the origin and all the components of $T$ vanish except $T^2_{13}=1$, so the right hand side of (\ref{eq:8.26}) is zero, hence $R_{ijk\overline{\ell}} =0$. By the formulae on $R^c_{i\overline{j}k\overline{\ell}}$ obtained in \S \ref{WCH3D}, we get
\begin{eqnarray}
&& R_{i\overline{j}k\overline{\ell}}=0,\ \text{if}\ \{ i,k\}\neq \{ j,\ell\}, \\
&& R_{i\overline{i}i\overline{i}}=2, \\
&& R_{1\overline{1}2\overline{2}} = R_{3\overline{3}2\overline{2}} = - R_{1\overline{1}3\overline{3}}  = \frac{3}{4}, \ \
R_{1\overline{2}2\overline{1}} = R_{3\overline{2}2\overline{3}} = \frac{1}{2}, \ \
R_{1\overline{3}3\overline{1}} = - \frac{1}{4},\label{eq:8.28}
\end{eqnarray}
where we recall that $R_{i\overline{j}k\overline{\ell}}=R_{k\overline{\ell}i\overline{j}}$ always holds for the Riemannian curvature $R$.
Now we compute the sectional curvature of $R$. Let $x$, $y$ be any two real tangent vector of $X$ at the origin $0$ satisfying $x\wedge y\neq 0$. Write $x=X+\overline{X}$ and $y=Y+\overline{Y}$ for type $(1,0)$ tangent vectors $X$ and $Y$.  We have
$$ x\wedge y = X\wedge Y + \overline{X} \wedge \overline{Y} + \big( X\wedge \overline{Y} - Y \wedge \overline{X} \big). $$
By Gray's theorem, $R_{XYZW}=0$ for any type $(1,0)$ tangent vectors $X$, $Y$, $Z$, $W$. Also, we have shown above $R_{XYZ\overline{W}}=0$
for the Wallach threefold $(X,g)$. By the first Bianchi identity, we have
$$ - R_{XY\overline{X}\overline{Y}} = - R_{X\overline{X}Y\overline{Y}} + R_{Y\overline{X}X\overline{Y}}. $$
It implies
\begin{eqnarray*}
 R_{xyyx} & = &  - R (x\wedge y, x\wedge y) \ \, = \ \,  -2 R(X\wedge Y, \overline{X} \wedge \overline{Y} ) - R( X\wedge \overline{Y} - Y \wedge \overline{X} , X\wedge \overline{Y} - Y \wedge \overline{X}) \\
 & = & -2 R_{X\overline{X}Y\overline{Y}} + 2 R_{X\overline{Y}Y\overline{X}} - R( X\wedge \overline{Y} - Y \wedge \overline{X} , X\wedge \overline{Y} - Y \wedge \overline{X}) \\
 & = & -2 R_{X\overline{X}Y\overline{Y}} + 4 R_{X\overline{Y}Y\overline{X}} - 2\mbox{Re} \{ R_{X\overline{Y}X\overline{Y}} \}.
\end{eqnarray*}
For $i\neq k$, let us write $R_{i\overline{i}k\overline{k}}=a_{ik}$ and $R_{i\overline{k}k\overline{i}}=b_{ik}$.
Then we have
\begin{equation} \label{eq:8.29}
2 b_{ik} - a_{ik}= \frac{1}{4}, \ \ \  \ 2a_{ik}- b_{ik} = \left\{  \begin{array}{ll} -\frac{5}{4}, \ \mbox{if} \  \{i,k\} =\{ 1,3\}  \\ \ 1, \ \ \ \mbox{otherwise} \end{array}\!\!,\right.\ \ \ \
a_{ik}+ b_{ik} = \left\{  \begin{array}{ll} -1, \ \mbox{if} \  \{i,k\} =\{ 1,3\}  \\ \ \frac{5}{4}, \ \ \ \mbox{otherwise} \end{array}\!\!. \right.
\end{equation}
So we get
\begin{eqnarray*}
 R_{xyyx} & = &  \sum_{i,j,k,\ell} R_{i\overline{j}k\overline{\ell}} \,\{ -2X_i\overline{X}_j Y_k \overline{Y}_{\ell} + 4 X_i\overline{Y}_j Y_k \overline{X}_{\ell} -  2\mbox{Re} (X_i\overline{Y}_j X_k \overline{Y}_{\ell}) \}  \\
 & = & \sum_i 2 \{ 2|X_iY_i|^2 - 2\mbox{Re} (X_i^2\overline{Y}_i^2)\} + \sum_{i\neq k} a_{ik} \{-2|X_iY_k|^2 + 4 X_i\overline{Y}_iY_k\overline{X}_k  - 2\mbox{Re} (X_i\overline{Y}_iX_k\overline{Y}_k ) \} + \\
 & & + \sum_{i\neq k} b_{ik} \{ -2  X_i\overline{X}_kY_k\overline{Y}_i + 4 |X_iY_k|^2 - 2\mbox{Re} (X_i\overline{Y}_kX_k\overline{Y}_i ) \} \\
 & = & 4\sum_i \{ |X_iY_i|^2 - \mbox{Re} (X_i^2\overline{Y}_i^2)\} +  2\sum_{i<k} \{ (2b_{ik}-a_{ik})(|X_iY_k|^2 + |X_kY_i|^2) \} + \\
&&  + \, 2\sum_{i<k} \{ (2a_{ik}-b_{ik}) 2\mbox{Re} (X_i\overline{X}_k \overline{Y}_i Y_k ) \} -2\sum_{i<k} \{ (a_{ik}+b_{ik}) 2\mbox{Re} (X_iX_k\overline{Y}_i \overline{Y}_k )  \} \\
& = & 2\sum_{i<k} F_{ik},
\end{eqnarray*}
where
\begin{eqnarray*}
F_{ik} & = & |X_iY_i|^2 - \mbox{Re} (X_i^2\overline{Y}_i^2) + |X_kY_k|^2 - \mbox{Re} (X_k^2\overline{Y}_k^2)
+ \frac{1}{4}(|X_iY_k|^2 + |X_kY_i|^2) + \\
&& + \, 2(2a_{ik}-b_{ik}) \mbox{Re} (X_i\overline{X}_k \overline{Y}_i Y_k ) -  2(a_{ik}+b_{ik}) \mbox{Re} (X_iX_k\overline{Y}_i \overline{Y}_k ).
\end{eqnarray*}
For $(ik)=(13)$, $2a_{ik}-b_{ik}=-\frac{5}{4}$ and $a_{ik}+b_{ik}=-1$, so we have
\begin{eqnarray*}
F_{ik} & = & |X_iY_i|^2 - \mbox{Re} (X_i^2\overline{Y}_i^2) + |X_kY_k|^2 - \mbox{Re} (X_k^2\overline{Y}_k^2)
+ \frac{1}{4}(|X_iY_k|^2 + |X_kY_i|^2) + \\
&& - \, \frac{5}{2} \mbox{Re} (X_i\overline{X}_k \overline{Y}_i Y_k ) + 2\mbox{Re} (X_iX_k\overline{Y}_i \overline{Y}_k )\\
& = & 2\{\mbox{Im}(X_i\overline{Y}_i)\}^2 + 2\{\mbox{Im}(X_k\overline{Y}_k)\}^2 + \frac{1}{4} |X_iY_k - X_kY_i |^2 -4 \mbox{Im}(X_i\overline{Y}_i)\mbox{Im}(X_k\overline{Y}_k) \\
& = & 2 \{ \mbox{Im}(X_i\overline{Y}_i) - \mbox{Im}(X_k\overline{Y}_k) \}^2 + \frac{1}{4} |X_iY_k - X_kY_i |^2 \ \geq \ 0.
\end{eqnarray*}
Similarly, for $(ik)=(12)$ or $(23)$, $2a_{ik}-b_{ik}=1$, $a_{ik}+b_{ik}=\frac{5}{4}$, so
\begin{eqnarray*}
F_{ik} & = & |X_iY_i|^2 - \mbox{Re} (X_i^2\overline{Y}_i^2) + |X_kY_k|^2 - \mbox{Re} (X_k^2\overline{Y}_k^2)
+ \frac{1}{4}(|X_iY_k|^2 + |X_kY_i|^2) + \\
&& + \,  2\mbox{Re} (X_i\overline{X}_k \overline{Y}_i Y_k ) - \frac{5}{2} \mbox{Re} (X_iX_k\overline{Y}_i \overline{Y}_k )\\
& = & 2\{\mbox{Im}(X_i\overline{Y}_i)\}^2 + 2\{\mbox{Im}(X_k\overline{Y}_k)\}^2 + \frac{1}{4} |X_i\overline{Y}_k - Y_i\overline{X}_k |^2 + 4 \mbox{Im}(X_i\overline{Y}_i)\mbox{Im}(X_k\overline{Y}_k) \\
& = & 2 \{ \mbox{Im}(X_i\overline{Y}_i) + \mbox{Im}(X_k\overline{Y}_k) \}^2 + \frac{1}{4} |X_i\overline{Y}_k - Y_i\overline{X}_k |^2 \ \geq \ 0.
\end{eqnarray*}
Hence, $R_{xyyx}$ is equal to
\begin{equation*}
4(I_1+I_2)^2+4(I_2+I_3)^2 + 4(I_1-I_3)^2 +\frac{1}{2}|X_1\overline{Y}_2-Y_1\overline{X}_2|^2 +\frac{1}{2}|X_2\overline{Y}_3-Y_2\overline{X}_3|^2 +\frac{1}{2}|X_1Y_3-Y_1X_3|^2,
\end{equation*}
where $I_i=\mbox{Im}(X_i\overline{Y}_i)$. Therefore the metric $g$ has  non-negative Riemannian sectional curvature. Note that the Riemannian sectional curvature is not strictly positive here. If we take $X_1=X_2=X_3\in {\mathbb R}\setminus \{ 0\}$, and $Y_1=\overline{Y}_2=Y_3=\rho \not\in {\mathbb R}$, then $I_1=I_3=-I_2$ and the expression above vanishes, so we get $R_{xyyx}=0$ with $x\wedge y\neq 0$.

To see the Ricci curvature of $g$, let $y_i=e_i+\overline{e}_i$, then the formula above becomes
$$ R_{x y_i y_i x} = 4|X_i|^2 - 4\mbox{Re}(X_i^2) +  |X_j|^2 +  |X_k|^2,$$
where $\{ i,j,k\} =\{ 1,2,3\}$. Similarly, if we let $y_{i^{\ast}}=\sqrt{-1}e_i-\sqrt{-1}\overline{e}_i$, then we get
$$ R_{xy_{i^{\ast}} y_{i^{\ast}} x} = 4|X_i|^2 + 4\mbox{Re}(X_i^2) +  |X_j|^2 +  |X_k|^2.$$
Add up the two equalities above for $i$ from $1$ to $3$ and we get $12|X|^2=6|x|^2$. Let $\varepsilon_i = \frac{1}{\sqrt{2}}(e_i+\overline{e}_i)$ and  $\varepsilon_{i^{\ast}} = \frac{\sqrt{-1}}{\sqrt{2}}(e_i-\overline{e}_i)$. Then $\{ \varepsilon_i, \varepsilon_{i^{\ast}}\}$ form an orthonormal tangent frame, so the Ricci curvature of the Riemannian metric $g$ is
\begin{equation}
\mathtt{Ric}(x) = \frac{1}{|x|^2} \sum_i \big( R_{x\varepsilon_i\varepsilon_ix} + R_{x\varepsilon_{i^{\ast}}\varepsilon_{i^{\ast}}x} \big)= \frac{1}{2|x|^2} 6|x|^2 = 3.
\end{equation}
That is, $g$ is an Einstein metric on $X$ with Ricci curvature $3$.

\vspace{0.2cm}


\end{document}